\title{Computable functions of reals}
\author{Katrin Tent and Martin Ziegler}
\date{19. August, 2009}
\newtheorem{satz}{Theorem}[section]
\newtheorem{lemma}[satz]{Lemma}
\newtheorem{proposition}[satz]{Proposition}
\newtheorem{folgerung}[satz]{Corollary}
\newtheorem{definition}[satz]{Definition}
\newtheorem{bemerkung}[satz]{Remark}
\newcommand{\nc}{\newcommand}
\nc{\sa}{semialgebraic\xspace}
\nc{\el}{elementary\xspace}
\nc{\low}{lower \el}
\nc{\inv}[1]{\frac{1}{#1}}
\nc{\N}{\mathbb{N}}
\nc{\Np}{\N_{\scriptscriptstyle >0}}
\nc{\Z}{\mathbb{Z}}
\nc{\Q}{\mathbb{Q}}
\nc{\R}{\mathbb{R}}
\nc{\Rp}{\R_{\scriptscriptstyle >0}}
\nc{\C}{\mathbb{C}}
\nc{\F}{\ensuremath{\mathcal{F}}\xspace}
\nc{\A}{\mathcal{A}}
\nc{\E}{\mathbb{E}}
\nc{\Epsilon}{{\Large $\epsilon$}} 
\nc{\ap}{approximable\xspace}
\nc{\e}{\mathrm{e}}
\nc{\ii}{\,\mathrm{i}}
\nc{\Es}{\E\setminus\R_{\scriptscriptstyle\leq0}}
\DeclareMathOperator{\h}{h}
\DeclareMathOperator{\di}{dist}
\DeclareMathOperator{\dd}{d\hspace{-1pt}}
\DeclareMathOperator{\vol}{vol}
\DeclareMathOperator{\D}{D}
\begin{document}

\maketitle

\begin{abstract} We introduce a new notion of
  computable function on $\R^N$ and prove some basic properties.  We
  give two applications, first a short proof of Yoshinaga's theorem
  that periods are \el (they are actually \low). We also show that the
  \low complex numbers form an algebraically closed field closed under
  exponentiation and some other special functions. \end{abstract}
\section{Introduction}

We here develop a notion of computable functions on the reals along
the lines of the \emph{bit-model} as described in
\cite{Braverman_Cook}. In contrast to the algebraic approach towards
computation over the reals developed in \cite{BSS}, our approach goes
back to Grzegorczyk and the hierarchy of \el functions and real
numbers developed by him and Mazur (see \cite{Grz} footnote p.\ 201).
We thank Dimiter Skordev for a careful reading of an earlier version
of this note. We also thank the referee for pointing out several
flaws in the first version.
\section{Good classes of functions}

A class \F of functions $\N^n\to\N$ is called \emph{good} if it
contains the constant functions, the projection functions, the
successor function, the modified difference function
$x\dot{-}y=\max\{0,x-y\}$, and is closed under composition and
\emph{bounded summation}
\[f(\bar x,y)=\sum_{i=0}^yg(\bar
x,i).\] The class of \emph{\low} functions is the smallest
good class. The smallest good class which is also closed under
\emph{bounded product}
\[f(\bar x,y)=\prod_{i=0}^yg(\bar x,i),\]
or -- equivalently -- the smallest good class which contains $n\mapsto
2^n$, is the class of \emph{\el} functions The \el functions are the
third class \Epsilon$^3$ of the Grzegorczyk hierarchy. The \low
functions belong to \Epsilon$^2$. It is not known whether all
functions in \Epsilon$^2$ are \low.

A function $f:\N^n\to\N^m$ is an \F--function if its components
$f_i:\N^n\to\N$, $i=1,\ldots,m$, are in \F.  A relation
$R\subset\N^n$ is called an \F--relation if its characteristic
function belongs to \F. Note that a good class is closed under
the bounded $\mu$--operator: if $R$ belongs to \F, then so does the
function
\[f(\bar x,y)=\min\{i\mid R(\bar x,i)\lor i=y\}.\]
As a special case we see that $\lfloor\frac{x}{y}\rfloor$ is \low.
The \F--relations are closed under Boolean combinations and bounded
quantification:
\[S(x,y)\;\Leftrightarrow\;\exists\,i\leq y\, R(x,i).\]
It follows for example that for any $f$ in \F the maximum function
\[\max_{j\leq y}f(\bar x,j)=\min\{\,i\mid \forall j\leq y \; f(\bar
  x,j)\leq i\,\}\]
is in \F since it is bounded by $\sum_{i=0}^yf(\bar x,i)$.\\

We call a set $X$ an \F--retract (of $\N^n$) if there are functions
$\iota:X\to\N^n$ and $\pi:\N^n\to X$ given with
$\pi\circ\iota=\mathrm{id}$ and $\iota\circ\pi\in\F$.  Note that the
product $X\times X'$ of two \F--retracts $X$ and $X'$ is again an
\F--retract (of $\N^{n+n'}$) in a natural way. We define a function
$f:X\to X'$ to be in \F if $\iota'\circ f\circ\pi:\N^n\to\N^{n'}$ is
in \F. By this definition the two maps $\iota:X\to\N^n$ and
$\pi:\N^n\to X$ belong to \F. For an \F--retract $X$, a subset of $X$
is in \F if its characteristic function is. It now makes sense to say
that a set $Y$ (together with $\iota:Y\to X$ and $\pi:X\to Y$) is a
retract of the retract $X$. Clearly $Y$ is again a retract in a
natural way.

Easily, $\N_{>0}$ is a \low retract of $\N$ and $\Z$ is a \low retract
of $\N^2$ via $\iota(z)=(\max(z,0),-\min(0,z))$ and $\pi(n,m)=n-m$. We
turn $\Q$ into a \low retract of $\Z\times\N$ by setting
$\iota(r)=(z,n)$, where $\frac{z}{n}$ is the unique representation of
$r$ with $n>0$ and $(z,n)=1$. Define $\pi(z,n)$ as $\frac{z}{n}$ if
$n>0$ and as $0$ otherwise.

For the remainder of this note we will consider $\Z$ and $\Q$ as
\emph{fixed \low} retracts of $\N$, using the maps defined in the last
paragraph.\\

The following lemma is clear.
\begin{lemma} Let $X$, $X'$ and $X''$ be \F--retracts, and $f:X\to
  X'$ and $f':X'\to X''$ in \F. Then $f'\circ f$ also belongs to
  \F. \qed
\end{lemma}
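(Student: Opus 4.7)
The plan is a one-line computation exploiting the defining identity $\pi'\circ\iota'=\mathrm{id}_{X'}$ of the middle retract, together with closure of the good class \F under composition of functions $\N^n\to\N^m$.

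First I would unpack the hypotheses: saying $f\in\F$ means by definition that the map $F:=\iota'\circ f\circ\pi:\N^n\to\N^{n'}$ lies in \F, and similarly $F':=\iota''\circ f'\circ\pi':\N^{n'}\to\N^{n''}$ lies in \F. Goal: show that $G:=\iota''\circ(f'\circ f)\circ\pi:\N^n\to\N^{n''}$ belongs to \F.

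The key step is the factorization
\[
F'\circ F \;=\; \iota''\circ f'\circ\pi'\circ\iota'\circ f\circ\pi \;=\; \iota''\circ f'\circ f\circ\pi \;=\; G,
\]
where the middle equality uses $\pi'\circ\iota'=\mathrm{id}_{X'}$. Since \F is good and in particular closed under composition of functions between cartesian powers of $\N$ (componentwise, this follows immediately from closure under composition in the usual sense), $F'\circ F$ lies in \F, hence so does $G$, which by definition means $f'\circ f\in\F$.

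There is no real obstacle here; the only thing to be slightly careful about is that closure under composition is stated for $\N^n\to\N$ functions, whereas we need it for maps $\N^n\to\N^{n'}\to\N^{n''}$. This reduces to the scalar case componentwise: each component of $F'\circ F$ is obtained by composing one component of $F'$ with the tuple $F$, which in turn is in \F component by component. This is the standard passage from scalar to vector-valued \F-functions and needs no further comment.
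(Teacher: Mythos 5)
Your proof is correct and is exactly the argument the paper has in mind (the paper marks this lemma as clear and omits the proof): the factorization $F'\circ F=\iota''\circ f'\circ(\pi'\circ\iota')\circ f\circ\pi=\iota''\circ(f'\circ f)\circ\pi$ via $\pi'\circ\iota'=\mathrm{id}_{X'}$, plus closure of \F under composition, is all that is needed. Your remark on reducing vector-valued composition to the scalar case componentwise is also the standard and correct way to handle that point.
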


The \emph{height} $\h(a)$ of a rational number $a=\frac{z}{n}$, for
relatively prime $z$ and $n$ ($n>0$), is the maximum of $|z|$ and
$n$. The height of a tuple of rational numbers is the maximum height
of its elements.

\begin{lemma}\label{l:q-beschraenkt}
  If $f:\Q^N\to\Q$ is in \F, there is an \F--function $\beta_f:\N\to\N$
  such that
  \[\h(a)\leq e\;\Rightarrow\;|f(a)|\leq \beta_f(e).\]
\end{lemma}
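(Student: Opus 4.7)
The plan is to reduce to an elementary bounded-maximum computation by unfolding the retract structure.

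First, since $\Q$ is our fixed \low retract of $\N^3$ (via $\Z\times\N$ and $\Z\hookrightarrow\N^2$), the product $\Q^N$ is an $\F$--retract of $\N^{3N}$, and the hypothesis $f\in\F$ unfolds to: the function $F = \iota\circ f\circ\pi\colon \N^{3N}\to\N^3$ lies in $\F$; write $F=(F_1,F_2,F_3)$.

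Second, I would translate the height bound through $\iota$ in both directions. Given $a=(z_1/n_1,\ldots,z_N/n_N)\in\Q^N$ in lowest terms (with each $n_j>0$), every component of $\iota(a)\in\N^{3N}$ is at most $\h(a)$, because $\max(z_j,0)$, $-\min(0,z_j)$ and $n_j$ are all bounded by $\max(|z_j|,n_j)\le\h(a)$. Conversely, $\pi\circ\iota=\mathrm{id}$ gives $\iota(f(a))=F(\iota(a))$, so writing $f(a)=z/n$ in lowest terms with $n\ge 1$ yields
\[
|f(a)|\;\le\;|z|\;=\;F_1(\iota(a))+F_2(\iota(a)).
\]

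Third, I define
\[
\beta_f(e)\;=\;\max_{\bar y\in\{0,\ldots,e\}^{3N}}\bigl(F_1(\bar y)+F_2(\bar y)\bigr),
\]
which dominates $|f(a)|$ whenever $\h(a)\le e$. To see $\beta_f\in\F$, use that addition of natural numbers lies in every good class (hence $F_1+F_2\in\F$), and that $\beta_f$ then arises by $3N$ successive applications of the bounded-maximum construction already recorded in the paper to preserve $\F$.

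I do not foresee a serious obstacle; the content is essentially bookkeeping once the retract maps are unwound. The two points requiring mild attention are (a) that all three $\iota$--components of each coordinate are bounded by the height, and (b) that iterating bounded max over $3N$ separate variables still produces a single $\F$--function of the one parameter $e$.
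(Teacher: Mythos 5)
Your proof is correct and takes essentially the same route as the paper's: unwind the retract presentation of $\Q^N$ so that the height bound $\h(a)\le e$ becomes a coordinatewise bound on the integer code of $a$, and then take a bounded maximum, which the paper has already observed preserves \F. The only cosmetic difference is that the paper stops at $g:(\Z\times\N)^N\to\Q$ and applies the \low ceiling $\lceil|g(\cdot)|\rceil$ before maximizing, whereas you unwind one level further to $F:\N^{3N}\to\N^3$ and read the bound off the numerator components; both variants work.
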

\begin{proof}
  Write $f(\frac{z_1}{n_1},\ldots,\frac{z_N}{n_N})=g(z_1,\ldots,n_N)$
  for an \F--function $g:(\Z\times\N)^N\to\Q$.  The function $\lceil
  x\rceil:\Q\to\N$ is \low. So we can define $\beta_f(e)$ to be the
  maximum of all $\lceil |g(x_1,\ldots,x_{2N})|\rceil$, where
  $|x_i|\leq e$. $\beta_f$ is easily seen to be in \F.
\end{proof}

\begin{lemma}\label{l:q-add}
  Let $g:X\times\N\to\Q$ be in \F for some \F-retract $X$. Then there is an \F--function
  $f:X\times\N\times\N_{>0}\to\Q$ such that
  \[\Bigl|f(x,y,k)-\sum_{i=0}^yg(x,i)\Bigr|<\inv{k} \mbox{\ for all\ }
  x\in X, y\in\N \mbox{\ and\ } k\in\N_{>0}.\]
\end{lemma}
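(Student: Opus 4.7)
The plan is to approximate each summand $g(x,i)$ by a rational with a common denominator depending only on $y$ and $k$, to sum those approximations by bounded summation, and to divide by the common denominator at the end.

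First I would set $N(y,k)=(y+1)k$, which is an $\F$-function of $(y,k)\in\N\times\N_{>0}$, and define
\[h(x,i,y,k)=\lfloor N(y,k)\cdot g(x,i)\rfloor\in\Z.\]
For $h$ to be an $\F$-function I need $\lfloor\cdot\rfloor:\Q\to\Z$ to lie in $\F$; I would obtain this by a small variant of the argument used in the proof of Lemma~\ref{l:q-beschraenkt} that $\lceil\cdot\rceil:\Q\to\N$ is low, applying truncated integer division on the fixed retract $\Z\times\N_{>0}$ of $\Q$ together with a sign correction. Termwise one then has $0\le g(x,i)-h(x,i,y,k)/N(y,k)<1/N(y,k)$, so summing the (finitely many, strictly slacker) inequalities over $i=0,\ldots,y$ gives a total error strictly less than $(y+1)/N(y,k)=1/k$.

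Second, I need to realise the integer sum $S(x,y,k)=\sum_{i=0}^y h(x,i,y,k)$ as an $\F$-function with values in $\Z$. Since bounded summation in the definition of a good class is stated only for $\N$-valued functions, I would split $h=h^+-h^-$ through the low retract $\iota:\Z\to\N^2$, $z\mapsto(\max(z,0),-\min(z,0))$. Each of $h^+,h^-$ is then an $\N$-valued composition of $\F$-functions, and two applications of bounded summation yield $\F$-functions $S^\pm(x,y,k)\in\N$ whose difference (via the retract $\pi:\N^2\to\Z$) is $S$.

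Finally, I would put $f(x,y,k)=S(x,y,k)/N(y,k)\in\Q$. Division of an integer by a positive natural number is an $\F$-operation on the fixed retract of $\Q$, so $f$ lies in $\F$, and the required bound drops out of the termwise estimate. The only step with any real friction is the first one: one must check that $\lfloor\cdot\rfloor:\Q\to\Z$ is in $\F$ and that $\Z$-valued bounded summation is permissible via the positive/negative splitting. Both verifications are short, but they are the only places where the axioms of a good class are actively invoked; the rest of the argument is bookkeeping.
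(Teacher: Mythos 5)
Your proposal is correct and follows essentially the same route as the paper: the paper also approximates each $g(x,i)$ by an integer numerator over the common denominator $(y+1)k$ (via a rounding function $h:\Q\times\N\to\Z$ with $|h(r,k)/k-r|<\inv{k}$), sums these integers by ($\Z$-valued) bounded summation, and divides at the end. Your extra care about realising $\Z$-valued bounded summation through the retract $\Z\to\N^2$ and about the lowness of $\lfloor\cdot\rfloor$ just fills in details the paper leaves implicit.
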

\begin{proof}
  We note first that for every \F--function $t:X\times\N\to\Z$ the
  function $\sum_{i=0}^yt(x,i)$ belongs to \F. It is also easy to see
  that there is a \low function $h:\Q\times\N\to\Z$ such that
  \[\Bigl|\frac{h(r,k)}{k}-r\Bigr|<\inv{k}.\]
  Now define $f$ by
  \[f(x,y,k)=\frac{\sum_{i=0}^yh(g(x,i),(y+1)k)}{(y+1)k}\]
\end{proof}

\begin{definition} A real number $x$ is an \F--real if for some
\F--function $a:\N\to\Q$ \[|x-a(k)|<\inv{k}\] \end{definition}

\begin{lemma}\label{l:f-real}
$x$ is an \F--real if and only if there is an \F--function
$n:\N\to\Z$ with  $\frac{n(k)-1}{k}<x<\frac{n(k)+1}{k}$.\qed
\end{lemma}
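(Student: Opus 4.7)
The plan is to prove the two directions separately, working throughout in the category of \F-retracts with the already-established fact that $\Q$ is a \low retract of $\Z\times\N$, so that arithmetic operations and rounding between $\Z$ and $\Q$ lie in \F.

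For the ($\Leftarrow$) direction, given an \F-function $n:\N\to\Z$ satisfying the stated strict inequalities, I would simply define $a:\N\to\Q$ by $a(k)=n(k)/k$ (setting $a(0)$ to an arbitrary value). Because $\Q$ is a fixed \low retract of $\Z\times\N$, the map $(n,k)\mapsto n/k$ is \low, and by the composition lemma $a$ is in \F. The hypothesis $\frac{n(k)-1}{k}<x<\frac{n(k)+1}{k}$ is then literally $|x-a(k)|<\inv{k}$.

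For the ($\Rightarrow$) direction, given $a:\N\to\Q$ in \F with $|x-a(k)|<\inv{k}$, I would thicken the approximation by a constant factor and round. Concretely, define $n(k)$ to be the nearest integer to $k\cdot a(3k)$, breaking ties by rounding down — equivalently $n(k)=\lfloor k\cdot a(3k)+\frac12\rfloor$. The ingredients (evaluating $a$, multiplying a rational by a natural, and taking the floor of a rational) are all \F-operations by the same argument used for $\lceil\cdot\rceil$ in the proof of Lemma~\ref{l:q-beschraenkt}, so $n$ is an \F-function $\N\to\Z$. To check correctness, observe $|kx-k\cdot a(3k)|<k\cdot\frac{1}{3k}=\frac13$ (strict) and $|k\cdot a(3k)-n(k)|\leq\frac12$, hence $|kx-n(k)|<\frac13+\frac12=\frac56<1$, which rearranges to $\frac{n(k)-1}{k}<x<\frac{n(k)+1}{k}$.

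The only subtle point is securing \emph{strict} inequality in the final bound. Using $a(2k)$ instead of $a(3k)$ would only deliver $|kx-n(k)|\leq 1$, so the slight over-sampling by the factor $3$ is what does the work; any fixed constant $>2$ would serve equally well. Everything else is routine closure of \F under composition on retracts.
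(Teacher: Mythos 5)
Your proof is correct, and the paper gives no proof at all here (the lemma is stated with a \qed as immediate), so your argument is exactly the kind of routine verification being left to the reader: the backward direction is a direct unwinding of the definition, and the forward direction is the standard round-a-finer-approximation argument, with all the operations ($(n,k)\mapsto n/k$, multiplication by $k$, and the floor) in \F by the retract machinery. One tiny quibble with your closing remark: $a(2k)$ would in fact also work, since $|kx-k\,a(2k)|<\tfrac12$ is already strict and so the triangle inequality still gives $|kx-n(k)|<1$; the factor $3$ is safe but not necessary.
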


\begin{bemerkung}\label{b:low-decimals}
  Let $x=\sum_{i=0}^\infty a_ib^{-i}$ for integers $b\geq 2$ and
  $a_i\in\{0,1,\ldots,b-2\}$. Then $x$ is an \F-real if and only if
  \[a_i=f(b^i)\]
  for some \F--function $f$.
\end{bemerkung}

In \cite{skordev} Skordev has shown among other things that $\pi$ is
\low and that $\e$ is in \Epsilon$^2$. Weiermann \cite{weiermann_e}
proved that $\e$ is \low. He used the representation $\e=\sum\inv{n!}$
and a theorem of d'Aquino \cite{daquino}, which states that the graph
of the factorial is $\Delta_0$--definable and therefore
\low.\footnote{D. Skordev and also the referee have informed us that
  lowness of the graph of the factorial follows from the fact that the
  class of functions with \low graph is closed under bounded
  multiplication, which is not hard to prove.}

We will show in Section \ref{sec:periods} that volumes of bounded
$0$--definable \sa sets are \low, and so is $\pi$ as the volume of the
unit circle. In Section \ref{sec:ift} we show that the exponential
function maps \low reals into \low reals.

\section{Functions on $\R^N$}

Let $O$ be an open subset of $\R^N$ where we allow $N=0$. For $e\in \Np$
put
\[O\restriction e=\bigl\{x\in O\bigm| |x|\leq e\bigr\}\]
and
\[O_e=\Bigl\{x\in O\restriction e\Bigm|
\di(x,\R^N\setminus O)\geq\inv{e}\Bigr\}.\]
(We use the maximum norm on $\R^N$.)
Notice:

\begin{enumerate}
\item $O_e$ is compact.
\item $U\subset O\;\Rightarrow \;U_e\subset O_e$.
\item $e<e'\;\Rightarrow \;O_e\subset O_{e'}$.
\item $O=\bigcup_{e\in\N}O_e$
\item $O_e\subset (O_{2e})^\circ_{2e}$
\end{enumerate}

\vspace{1em}
\begin{definition}\label{d:f-function}
  Let \F be a good class.
  A function $F:O\to\R$ is in \F if there are \F--functions $d:\N\to\N$
  and $f:\Q^N\times\N\to\Q$ such that for all $e\in\Np$ and all
  $a\in\Q^N$ and  $x\in O_e$
  \begin{equation}\label{e:f-function}
    |x-a|<\inv{d(e)} \to |F(x)-f(a,e)| < \inv{e}.
  \end{equation}
  If \eqref{e:f-function} holds for all $x\in O\restriction e$, we
  call $F$ uniformly \F.\footnote{As pointed out by the referee even
    in the case when \F is the class of \low functions, the
    \F--functions on $\R$ are not necessarily computable in the sense
    of \cite{weihrauch}.}
\end{definition}
We will always assume that $d:N\to N$ is strictly increasing.

\begin{bemerkung}\label{b:extends-uniform}
  Clearly, a function is uniformly \F on $O$ if it can be extended to
  an \F--function on some $\epsilon$-neighborhood of $O$.
\end{bemerkung}

This definition easily extends to $f:O\to\R^M$ (again under the
maximum norm). Then $f$ is in \F if and only if all $f_i$,
$i=1,\ldots,M$, are in \F.
\begin{lemma}\label{l:constant}
  \F--functions  map \F--reals to \F--reals.
  A constant function on $\R^N$  is uniformly in
  \F if and only if its value is an  \F--real. \qed
\end{lemma}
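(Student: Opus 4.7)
I would treat the two claims in turn; both unpack Definition~\ref{d:f-function}, and only the first contains a genuine subtlety.

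For the first claim, fix an \F--function $F\colon O\to\R$ with witnesses $d,f$ as in Definition~\ref{d:f-function}, and let $x=(x_1,\ldots,x_N)\in O$ be an \F--real componentwise, with \F--approximation $a_0\colon\N\to\Q^N$ satisfying $|x-a_0(k)|<1/k$. The natural candidate for an \F--approximation of $F(x)$ is $b(k)=f(a_0(d(k)),k)$: whenever $x\in O_k$, the bound $|x-a_0(d(k))|<1/d(k)$ combined with \eqref{e:f-function} yields $|F(x)-b(k)|<1/k$. The only issue is that $x$ need not lie in $O_k$ for small $k$. However, since $O$ is open and $x\in O$, property~(4) of the $O_e$ supplies some $e_0$ with $x\in O_{e_0}$, whereupon property~(3) gives $x\in O_k$ for all $k\ge e_0$. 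I would therefore replace $k$ throughout by $\max(k,e_0)$: the error is then bounded by $1/\max(k,e_0)\le 1/k$. Since $e_0$ is a fixed constant and $\max$ is \low, the resulting $b$ is an \F--function, so $F(x)$ is an \F--real.

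For the second claim, the ``if'' direction is direct: given an \F--approximation $a_0\colon\N\to\Q$ of $c$, set $d(e)=1$ and $f(a,e)=a_0(e)$; then \eqref{e:f-function} holds for every $x\in O\restriction e$, making the constant function $F\equiv c$ uniformly \F. Conversely, if $F\equiv c$ is uniformly \F on $\R^N$ with witnesses $d,f$, evaluating \eqref{e:f-function} at $x=0$ and $a=0$ (or at the empty tuple when $N=0$) shows $k\mapsto f(0,k)$ is an \F--approximation of $c$, so $c$ is an \F--real.

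The only real obstacle is the boundary issue in the first part, namely that an \F--real $x\in O$ need not lie in $O_k$ for small $k$; the $\max$ trick above absorbs this into a constant and preserves \F--ness.
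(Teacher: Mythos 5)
Your proof is correct and is exactly the routine unpacking of Definition~\ref{d:f-function} that the authors had in mind (the paper marks this lemma \qed without proof). The one point needing care --- that an \F--real $x\in O$ only lies in $O_k$ for $k$ beyond some fixed $e_0$, absorbed via $\max(k,e_0)$ --- is handled properly.
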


\begin{definition}
  We call a function $F:O\to\R^M$ (uniformly) \F-bounded, if there is
  an \F--function $\beta_F:\N\to\Q$ in \F such that $|F(x)|\leq
  \beta_F(e)$ for all $x\in O_e$ (\/$x\in O\restriction e$,
  respectively).
\end{definition}
Note that $\exp:\R\to\R$ is not \low bounded, but elementary bounded.
\begin{lemma}\label{l:bounded}
  If $F:O\to\R^M$ is (uniformly) in \F, then $F$ is (uniformly)
  \F-bounded.
\end{lemma}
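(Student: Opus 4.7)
The plan is to use the \F-witnesses $d$ and $f$ for $F$ to write
\[|F(x)| \le |F(x)-f(a,e)| + |f(a,e)| < \inv{e} + |f(a,e)|\]
for a well-chosen rational $a\in\Q^N$ approximating $x$, and then apply Lemma~\ref{l:q-beschraenkt} to bound $|f(a,e)|$ uniformly in the height of $(a,e)$. The heart of the argument is producing such an $a$ with height bounded by an \F-function of $e$.

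I would take $a_i := \lfloor 2d(e)\,x_i\rfloor / (2d(e))$ for each coordinate. This guarantees $|x - a| \le \inv{2d(e)} < \inv{d(e)}$, so the hypothesis of \eqref{e:f-function} is satisfied and the above bound on $|F(x)|$ applies. Since $x\in O\restriction e$ (which holds in both the general and the uniform case, as $O_e \subseteq O\restriction e$), we have $|x_i|\le e$ and hence $|\lfloor 2d(e)\,x_i \rfloor|\le 2d(e)e+1$. As reduction to lowest terms can only decrease numerator and denominator, the height of $a_i$ after reduction is at most $H(e) := 2d(e)e+1$; since $\h(e)=e$ as well, the pair $(a,e)\in\Q^{N+1}$ has $\h(a,e)\le H(e)$, and $H\in\F$ by composition with $d$.

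Applying Lemma~\ref{l:q-beschraenkt} to $f$, viewed as an \F-function $\Q^{N+1}\to\Q$, we obtain $\beta_f\in\F$ with $|f(a,e)|\le \beta_f(H(e))$ whenever $\h(a,e)\le H(e)$. The function $\beta_F(e) := 1 + \beta_f(H(e))$ then lies in \F and satisfies $|F(x)|\le\beta_F(e)$ for all $x\in O_e$ (respectively $x\in O\restriction e$). The uniform case therefore requires no separate argument, since the choice of $a$ and the height computation rely only on the bound $|x|\le e$. I do not expect any real obstacle here: the only point needing care is the verification that dyadic-style rounding yields a rational of polynomially bounded height, which is immediate because reducing a fraction never increases its height.
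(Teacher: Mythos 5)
Your proof is correct and follows essentially the same route as the paper's: choose a rational $a$ on a grid of mesh $\inv{d(e)}$ (resp.\ $\inv{2d(e)}$) near $x$, bound its height by an \F--function of $e$ using $|x|\leq e$, and conclude via Lemma~\ref{l:q-beschraenkt} plus the $\inv{e}$ error from \eqref{e:f-function}. No substantive difference.
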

\begin{proof}
  Assume the \F--functions $d$ and $f$ satisfy \eqref{e:f-function}.
  Fix a number $e$ and consider an $x\in O_e$. Choose $z\in\Z^N$
  such that the distance between $x$ and $a=\inv{d(e)}z$ is less
  than $\inv{d(e)}$. Since $|a|<e+1$, the height of $a$ is smaller
  than $(e+1)d(e)$. By Lemma~\ref{l:q-beschraenkt} we have $|f(a,e)|\leq
  \beta_f((e+1)d(e))$, and therefore
  \[|F(x)|\leq\beta_f((e+1)d(e))+1=\beta_F(e).\]
\end{proof}
\begin{lemma}\label{l:stetig}
  \F--functions $O\to\R$ are continuous.
\end{lemma}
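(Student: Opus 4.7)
The plan is to prove continuity pointwise, using the defining inequality \eqref{e:f-function} together with property 5 of the exhaustion $O_e$, which is exactly what converts ``$x\in O_e$'' (a closed condition) into something that survives small perturbations of $x$.

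Fix $x_0\in O$ and $\varepsilon>0$. First I would choose $e\in\Np$ large enough so that $x_0\in O_e$ (possible by property 4, $O=\bigcup_e O_e$) and simultaneously $\inv{e}<\varepsilon$. The key geometric input is then property 5: from $x_0\in O_e$ we obtain $x_0\in (O_{2e})^\circ_{2e}$, which by definition of the subscript means that every point within distance $\inv{2e}$ of $x_0$ still lies in $(O_{2e})^\circ\subset O_{2e}$. This is the step that lets me apply \eqref{e:f-function} not only at $x_0$ but at nearby points as well.

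Next I would set $\delta=\min\!\bigl(\inv{2e},\inv{2d(2e)}\bigr)$ and consider any $y\in O$ with $|y-x_0|<\delta$. By the previous paragraph, $y\in O_{2e}$; also $x_0\in O_{2e}$ by monotonicity (property 3). Pick a rational approximation $a\in\Q^N$ with $|a-x_0|<\inv{2d(2e)}$, so that by the triangle inequality $|a-y|<\inv{d(2e)}$ as well. Applying \eqref{e:f-function} with parameter $2e$ at both $x_0$ and $y$ (with the same $a$) yields
\[
|F(x_0)-f(a,2e)|<\inv{2e}\quad\text{and}\quad |F(y)-f(a,2e)|<\inv{2e},
\]
hence $|F(x_0)-F(y)|<\inv{e}<\varepsilon$ by another triangle inequality. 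This proves continuity of $F$ at $x_0$.

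I do not foresee any real obstacle here; the whole proof hinges on the single observation that property 5 guarantees a genuine open neighborhood of $x_0$ inside some $O_{e'}$, so that the approximation inequality can be used at both $x_0$ and its perturbations with a common rational $a$. Choosing $e'=2e$ and absorbing the two error terms $\inv{2e}+\inv{2e}$ into $\inv{e}$ is the only arithmetic bookkeeping required.
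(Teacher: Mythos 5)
Your proof is correct and rests on exactly the same device as the paper's: choose a single rational $a$ close to both of two nearby points of some $O_{e'}$ and apply \eqref{e:f-function} twice with the triangle inequality. The paper phrases the result as uniform continuity on each $O_e$ (working at level $e$ with tolerance $\frac{2}{d(e)}$), while you prove pointwise continuity and make explicit the appeal to property 5 to place a whole neighborhood of $x_0$ inside $O_{2e}$; the difference is only one of bookkeeping.
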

\begin{proof}
  We show that $F$ is uniformly continuous on every $O_e$.  Assume that
  $d$ and $f$ satisfy \eqref{e:f-function}. It suffices to show that for all
  $x,x'\in O_e$
  \[|x-x'|<\frac{2}{d(e)} \to |F(x)-F(x')| < \frac{2}{e}.\]
  Assume $|x-x'|<\frac{2}{d(e)}$. Choose $a\in\Q^N$ such that
  $|x-a|<\inv{d(e)}$ and $|x'-a|<\inv{d(e)}$. Then both $F(x)$ and
  $F(x')$ differ from $f(a,e)$ by less than $\inv{e}$. Whence
  $|F(x)-F(x')| < \frac{2}{e}$.
\end{proof}

\begin{lemma}\label{l:comp-u}
   If $F:O\to \R^M$ is in \F, $U\subset\R^M$ open and $G:U\to\R$
   uniformly in \F, then $G\circ F:F^{-1}U\cap O\to\R$ is in \F. If $F$
   is uniformly in \F, then so is $G\circ F$.
\end{lemma}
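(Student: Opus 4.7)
The plan is to construct the required \F-data for $G \circ F$ by chaining the data for $F$ and for $G$. Write $(d_F, f_F)$ for the pair witnessing that $F$ is in \F and $(d_G, f_G)$ for the pair witnessing that $G$ is uniformly in \F. To approximate $(G \circ F)(x)$ within $1/e$, I would first approximate $F(x)$ by the rational $b = f_F(a, e'')$ at some inner precision $e''$, and then output $f_G(b, e')$ at some outer precision $e'$. Everything reduces to choosing $e'$ and $e''$ as \F-functions of $e$ so that the two approximation steps fit together.

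First I set $e'(e) = \max(2e, \lceil \beta_F(e) \rceil)$, where $\beta_F$ is the \F-bound on $F$ from Lemma~\ref{l:bounded}. The first component makes the final error $1/e'$ strictly less than $1/e$; the second guarantees $F(x) \in U \restriction e'(e)$ whenever $x \in O_e$, which is what the uniform \F-property of $G$ demands. Next I set $e''(e) = \max(e, d_G(e'(e)) + 1)$, so that $1/e''(e) < 1/d_G(e'(e))$, which is the precision required when feeding the rational $b$ into $f_G$. Both $e'$ and $e''$ are \F-functions of $e$ by closure under composition and the basic integer operations; after replacing them by their running maxima if necessary we may take them strictly increasing. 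The definitions are then
\[d_{G \circ F}(e) = d_F(e''(e)) \quad\text{and}\quad f_{G \circ F}(a, e) = f_G\bigl(f_F(a, e''(e)),\,e'(e)\bigr).\]

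To verify correctness, let $x \in (F^{-1}U \cap O)_e$ and suppose $|x - a| < 1/d_F(e''(e))$. Since $(F^{-1}U \cap O)_e \subset O_e \subset O_{e''(e)}$ by the monotonicity of the $O_{\bullet}$ hierarchy, the \F-property of $F$ gives $|F(x) - b| < 1/e''(e) < 1/d_G(e'(e))$ with $b = f_F(a, e''(e))$. On the other hand $F(x) \in U$ by hypothesis and $|F(x)| \leq \beta_F(e) \leq e'(e)$, so $F(x) \in U \restriction e'(e)$, and the uniform \F-property of $G$ yields $|G(F(x)) - f_G(b, e'(e))| < 1/e'(e) < 1/e$. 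For the uniform clause, the same argument goes through verbatim after replacing $(F^{-1}U \cap O)_e$ by $(F^{-1}U \cap O) \restriction e \subset O \restriction e''(e)$ and invoking the uniform \F-bound and uniform \F-property of $F$.

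I do not expect any serious obstacle: the proof is three-level precision bookkeeping. The only subtle point is that the \emph{uniform} hypothesis on $G$ is essential --- if $G$ were only (non-uniformly) in \F, one would need $F(x)$ to sit in some $U_{e'}$, hence at distance $\geq 1/e'$ from $\R^M \setminus U$, which is not guaranteed by $x \in F^{-1}U$ alone. Continuity of $F$ (Lemma~\ref{l:stetig}) is used silently to make $F^{-1}U \cap O$ open, so that the notation $(F^{-1}U \cap O)_e$ is well defined.
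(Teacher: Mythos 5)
Your proof is correct and follows essentially the same route as the paper's: bound $F(x)$ via Lemma~\ref{l:bounded} to place it in $U\restriction e'$, feed the $G$-precision $d_G(e')$ back as the target accuracy for $F$, and compose the witnessing functions; your $e'=\max(2e,\lceil\beta_F(e)\rceil)$, $e''=\max(e,d_G(e')+1)$ is just a cosmetic variant of the paper's $\beta(e)$ and $d'(\beta(e))$.
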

\begin{proof}
  Assume that $F$ satisfies \eqref{e:f-function} with the
  \F--functions $d$ and $f$ and assume that $G$ satisfies
  \eqref{e:f-function} with $d'$ and $g$.

  Let $\beta=\beta_F$ be as in \ref{l:bounded} and set
  $V=F^{-1}(U)\cap O$. Clearly we may assume $\beta(e)\geq e$ for all $e\in\N$.
  So if $x\in V_e\subseteq O_e$, then $F(x)\in U\restriction \beta(e)$.
  Thus for all $e\in\N$, $a\in\Q^N$ and $x\in V_e$ we have
  \begin{gather*}
    |x-a|<\inv{d(d'(\beta(e)))} \Rightarrow
    |F(x)-f(a,d'(\beta(e)))|<\inv{d'(\beta(e))}\\ \Rightarrow
    \Bigl|G\circ F(x)-g\Bigl(f\bigl(a, d'(\beta(e))\bigr), \beta(e)\Bigr)\Bigr|<\inv{\beta(e)}\leq \inv{e}
  \end{gather*}
  This shows also the second part of the theorem, only replace $V_e$
  and $O_e$ by $V\restriction e$ and $O\restriction e$.
\end{proof}

\begin{definition}
  A function $F:O\to U$ is called \F-compact if there is an
  \F--function $\beta:\N\to\N$ such that $F(O_e)\subseteq
  U_{\beta(e)}$ for all $e\in\Np$.
\end{definition}

Note that $F(x)=\inv{x}:(0,\infty)\to(0,\infty)$ and
$\ln(x):(0,\infty)\to(-\infty,\infty)$ are \low compact.

\begin{folgerung}\label{f:comp-c} Let $O$, $U$ and $V$ be open sets in
Euclidean space.
   If $F:O\to U$ is in \F and \F--compact, and if $G:U\to V$ is in \F, then
   $G\circ F$ is in \F. If $G$ is \F--compact, then so is $G\circ
   F:O\to V$.
\end{folgerung}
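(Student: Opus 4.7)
The plan is to adapt the proof of Lemma~\ref{l:comp-u}. There, $G$ was assumed uniformly in \F because \F-boundedness of $F$ only gave $F(x)\in U\restriction\beta(e)$. Here $G$ is only in \F, but the stronger \F-compactness of $F$ gives $F(x)\in U_{\beta(e)}$, which is exactly where the non-uniform \F-approximation of $G$ applies. That is really the only conceptual shift; the rest is the same precision chaining.

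Concretely, let $d,f$ witness $F\in\F$ and $d',g$ witness $G\in\F$, and let $\beta\in\F$ witness \F-compactness of $F$, so $F(O_e)\subseteq U_{\beta(e)}$. After replacing $\beta$ with $\max(\beta,\mathrm{id})$, which remains in \F, I may assume $\beta(e)\geq e$ for all $e$. For $x\in O_e$ and $a\in\Q^N$ with $|x-a|<\inv{d(d'(\beta(e)))}$, the $F$-estimate (valid because $x\in O_{d'(\beta(e))}$, using that $d'$ is strictly increasing and $\beta(e)\geq e$) yields $b=f(a,d'(\beta(e)))$ with $|F(x)-b|<\inv{d'(\beta(e))}$; since $F(x)\in U_{\beta(e)}$, the $G$-estimate then gives $|G(F(x))-g(b,\beta(e))|<\inv{\beta(e)}\leq\inv{e}$. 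Hence $d''(e)=d(d'(\beta(e)))$ and $f''(a,e)=g(f(a,d'(\beta(e))),\beta(e))$, both compositions of \F-functions, witness $G\circ F\in\F$.

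The second assertion reduces to a one-liner: if $\gamma\in\F$ witnesses \F-compactness of $G$, then $(G\circ F)(O_e)\subseteq G(U_{\beta(e)})\subseteq V_{\gamma(\beta(e))}$, and $\gamma\circ\beta$ is again in \F. The only mildly subtle point in the entire argument is bookkeeping which precision level ($d'(\beta(e))$, $\beta(e)$, or $e$) is used at each step of the chain; there is no real obstacle beyond identifying \F-compactness as the correct strengthening needed to replace the uniformity of $G$ in Lemma~\ref{l:comp-u}.
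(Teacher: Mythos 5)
Your proposal is correct and follows exactly the route the paper intends: its proof of this corollary is literally ``by the proof of Lemma~\ref{l:comp-u}'', and you have correctly identified the single substantive change, namely that \F--compactness of $F$ lands $F(x)$ in $U_{\beta(e)}$, where the non-uniform estimate for $G$ applies, replacing the role that uniformity of $G$ played before. The precision chaining $d''(e)=d(d'(\beta(e)))$, $f''(a,e)=g(f(a,d'(\beta(e))),\beta(e))$ and the one-line composition $\gamma\circ\beta$ for the second claim match the paper's argument.
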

\begin{proof}
  By the proof of Lemma \ref{l:comp-u}.
\end{proof}

If $F$ is defined on the union of two open sets $U$ and $V$, and if
$F$ is in \F restricted to $U$ and restricted to $V$, it is not clear
that $F$ is in \F, without additional assumptions.
\begin{bemerkung}\label{b:union}
  Let $F$ be defined on the union of $U$ and $V$ and assume that
  $F\restriction U$ and $F\restriction V$ are in \F. Assume also
  \begin{enumerate}
  \item   that for some \F--function $u:\N\to\N$ $(U\cup V)_e\subset
  U_{u(e)}\cup V_{u(e)}$
  \item that $U$ and $V$ are \F--\ap in the sense of
    Definition~\ref{d:fopen} below,
  \end{enumerate}
  then $F$ is in \F.\qed
\end{bemerkung}
The two conditions are satisfied if $U$ and $V$ are open intervals
with \F--computable endpoints.
\section{Semialgebraic functions}

In this article \sa functions (relations) are functions (relations)
definable \emph{without parameters} in $\R$. The \emph{trace} of a
relation $R\subseteq \R^N$ on $\Q$ is $R\cap \Q^N$.

The following observation is due to Yoshinaga \cite{yoshinaga_periods}.
\begin{lemma}\label{l:spur}
  The trace of \sa relations on $\Q$ is \low.
\end{lemma}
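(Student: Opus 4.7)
The plan is to reduce to atomic semialgebraic conditions via Tarski's quantifier elimination for real closed fields, and then evaluate the resulting integer polynomials using the \low retract structure on $\Q$ together with the lowness of integer arithmetic. Since $R\subseteq\R^N$ is definable without parameters in $\R$, quantifier elimination gives a quantifier-free formula $\varphi(\bar x)$ over $\Z[\bar x]$ defining $R$, i.e.\ a Boolean combination of atomic conditions $p(\bar x)\bowtie 0$ with $p\in\Z[\bar x]$ and $\bowtie\in\{<,=\}$. A good class is closed under Boolean combinations of its relations, so it suffices to show that for each such fixed atomic condition the trace $\{\bar q\in\Q^N\mid p(\bar q)\bowtie 0\}$ is \low.

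Fix such a $p$ of total degree $d$ and represent $q_i=z_i/n_i$ using the \low retract $\Q\hookrightarrow\Z\times\Np$ (so $n_i>0$). Set
\[M(\bar z,\bar n)=\Bigl(\prod_{i=1}^N n_i^{d}\Bigr)\,p\bigl(z_1/n_1,\ldots,z_N/n_N\bigr).\]
All denominators cancel, so $M\in\Z[\bar z,\bar n]$ is a fixed integer polynomial, and the sign of $p(\bar q)$ equals the sign of $M(\bar z,\bar n)$ because $\prod n_i^{d}>0$. Evaluating $M$ requires only a bounded (depending on $p$) number of additions, subtractions, and multiplications in $\Z$. Multiplication on $\N$ is \low via $xy=\bigl(\sum_{i=0}^{y}x\bigr)\dot{-}x$, hence so is multiplication on $\Z$ through the retract. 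A sign test on $\Z$ is clearly \low, so the characteristic function of each atomic trace lies in the smallest good class, and taking the Boolean combination finishes the proof.

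The main thing to verify is that polynomial evaluation really stays in the smallest good class. This reduces entirely to the lowness of $+,-,\cdot$ on $\Z$, together with the observation that only a bounded composition of these operations — determined by the fixed shape of $p$ — is ever required. In particular, no unbounded iteration is needed, so one stays within the \low class throughout.
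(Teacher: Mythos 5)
Your proof is correct and takes essentially the same approach as the paper, whose entire proof is the single line ``by quantifier elimination.'' Your write-up simply supplies the routine verification the authors leave implicit: clearing denominators through the retract $\Q\hookrightarrow\Z\times\N$, the lowness of integer arithmetic and sign tests, and closure of good classes under Boolean combinations and bounded composition.
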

\begin{proof}
  By quantifier elimination.
\end{proof}

Note that any \sa function $g:\R\to\R$ is polynomially bounded, i.e.
there is some $n\in\N$ with $|g(x)|\leq |x|^n$ for sufficiently large
$x$.
\begin{satz} \label{s:sa}
  Continuous \sa functions $F:O\to\R$ are \low for every open \sa set
  $O$.
\end{satz}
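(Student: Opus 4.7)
The plan is to construct a \low modulus of continuity $d:\N\to\N$ and a \low rational approximation $f:\Q^N\times\N\to\Q$ witnessing \eqref{e:f-function}. The engine is Lemma~\ref{l:spur}, which promotes \sa relations to \low traces on $\Q$. Two preliminary facts drive the argument: (a) for $e\geq 1$, if $x\in O_e$ and $a\in\Q^N$ satisfy $|x-a|<1/(2e)$, then $a\in O_{2e}$, by a direct norm/distance estimate; and (b) every \sa function $\N\to\N$ is polynomially bounded, since a \sa function $\R\to\R$ is asymptotic at infinity to $c\cdot x^{\alpha}$ for some rational $\alpha$.

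For the modulus of continuity I exploit the uniform continuity of $F$ on each compact \sa set $O_{2e}$: the function
\[
D(e)=\min\bigl\{n\geq 2e\bigm|\forall x,y\in O_{2e}\,(|x-y|\leq 2/n\to|F(x)-F(y)|\leq 1/(2e))\bigr\}
\]
is \sa in $e$ and everywhere finite, hence by (b) bounded by some polynomial $p(e)$. The underlying \sa relation $R(e,n)$ has a \low trace on $\N^2$, so the bounded $\mu$--operator up to $p(e)$ produces $d(e)=D(e)$ as a \low function.

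For the approximation, the same recipe applied to $B(e)=\lceil\sup_{x\in O_{2e}}|F(x)|\rceil$ (finite by compactness of $O_{2e}$ and continuity of $F$, \sa in $e$, hence polynomially bounded) gives a \low upper bound on $|F|$ over $O_{2e}$. Then the \sa relation
\[
T(a,e,q)\equiv a\in O_{2e}\land|q-F(a)|<1/(2e)
\]
on $\Q^N\times\N\times\Q$ has a \low trace, and for $a\in O_{2e}\cap\Q^N$ it is witnessed by some $q\in\Q$ of height polynomial in $e$ and $B(e)$. Define $f(a,e)$ to be the least such $q$ in the fixed $\Q$--encoding when one exists, and $0$ otherwise; this is \low by the bounded $\mu$--operator.

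Verification is then immediate: for $x\in O_e$ and $a\in\Q^N$ with $|x-a|<1/d(e)$, since $d(e)\geq 2e$ observation (a) yields $a\in O_{2e}$, whence $|F(a)-f(a,e)|<1/(2e)$; and the defining property of $D$ gives $|F(x)-F(a)|\leq 1/(2e)$, so $|F(x)-f(a,e)|<1/e$. The main obstacle is fact (b): one must argue that because $F$ and $O$ are \sa, both $D$ and $B$ extend to \sa functions of a real variable $e$ (via Tarski--Seidenberg and the uniform definability of $O_{2e}$), and then invoke the classical polynomial growth of \sa functions of one variable to obtain the bound $p(e)$ needed for the bounded $\mu$--operator.
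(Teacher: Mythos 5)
Your proposal is correct and follows essentially the same route as the paper: uniform continuity on the compact sets $O_{2e}$ combined with polynomial boundedness of \sa functions to obtain a \low modulus $d(e)$ and a \low bound on $|F|$, then Lemma~\ref{l:spur} to extract a \low rational approximation of $F(a)$, and the same triangle-inequality verification. The only differences are cosmetic (you recover $d(e)$ and $f(a,e)$ by bounded $\mu$--search where the paper takes $d(e)=e^n$ directly and rounds $F(a)$ to a grid of step $\inv{2e}$).
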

\begin{proof}
  Fix some large $e\in\Rp$. Note that the $O_e$ -- for real positive
  $e$ -- are uniformly definable. Since $F$ is uniformly continuous on
  $O_{2e}$, there is some positive real $d$ such that for all $x,x'\in
  O_{2e}$
  \[|x-x'|<\inv{d}\;\Rightarrow\;|F(x)-F(x')|<\inv{2e}.\]
  Since the infimum of all such $d$ is a \sa function of $e$, there is
  some $n\in\N$ such that for all large $e\in\N$ and all $x,x'\in O_{2e}$
  we have
  \[|x-x'|<\inv{e^n}\;\Rightarrow\;|F(x)-F(x')|<\inv{2e}.\]

  By a similar argument we obtain a polynomial bound $e^m$ for $|F|$
  on $O_{2e}$.

  Now define $f:\Q^N\times N\to\Q$
  in the following way: If $a$ does not belong to $O_{2e}$, set
  $f(a,e)=0$. Otherwise let
  $f(a,e)$ be the unique
  \[b\in\{-e^m,-e^m+\inv{2e},\ldots,e^m-\inv{2e},e^m\}\]
  such that $F(a)\in[b,b+\inv{2e})$.  Then $f$ is \low by
    Lemma~\ref{l:spur}.  Now assume $e\in\Np, a\in\Q^N$ and $x\in O_e$
    with $|x-a|<\inv{e^n}$. We may assume $2e\leq e^n$. Then $a\in
    O_{2e}$ and therefore
    \[|F(x)-f(a,e)|\leq|F(x)-F(a)|+|F(a)-f(a,e)|<\inv{2e}+\inv{2e}=
    \inv{e}.\]
\end{proof}

By Remark \ref{b:extends-uniform} this yields:
 \begin{folgerung}
 Let $F:O\to\R$ be \sa. If there is some open \sa set $U$ containing
 an $\epsilon$-neighborhood of $O$ and such that $F$ can be extended
 continuously and semialgebraically to $U$, then $F:O\to\R$ is
 uniformly \low.\qed
 \end{folgerung}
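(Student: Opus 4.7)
The plan is very short because almost all the work has already been done in Theorem~\ref{s:sa} and Remark~\ref{b:extends-uniform}. First I would apply the hypothesis: by assumption there is an open \sa set $U$ containing an $\epsilon$-neighborhood of $O$, together with a continuous \sa extension $\widetilde{F}:U\to\R$ of $F$.

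Next I would invoke Theorem~\ref{s:sa} on $\widetilde{F}$: since $U$ is open \sa and $\widetilde{F}$ is continuous and \sa on $U$, the theorem gives that $\widetilde{F}:U\to\R$ is \low (in particular, it is an \F--function for the class \F of \low functions).

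Finally I would apply Remark~\ref{b:extends-uniform}: because $F$ extends to the \low function $\widetilde{F}$ on some $\epsilon$-neighborhood of $O$ (namely $U$), the remark tells us immediately that $F:O\to\R$ is uniformly \low, which is exactly the conclusion.

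There is essentially no obstacle; the corollary is a direct splice of Theorem~\ref{s:sa} and Remark~\ref{b:extends-uniform}, and the only thing to check is that the extension promised by the hypothesis meets the hypotheses of both results, which it does verbatim.
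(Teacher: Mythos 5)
Your proof is correct and is exactly the argument the paper intends: the corollary is stated with the lead-in ``By Remark~\ref{b:extends-uniform} this yields'' and is left as immediate, i.e.\ apply Theorem~\ref{s:sa} to the continuous \sa extension on $U$ and then Remark~\ref{b:extends-uniform}. Nothing to add.
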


\begin{bemerkung}\label{b:sa_compact}
  It is easy to see that, if $F:O\to V$ is continuous and $F,O,V$ are
  \sa, then $F$ is \low compact.
\end{bemerkung}

 \begin{folgerung}\label{f:real_closed}
 The set of \F-reals $\R_\F$ forms a real closed field.
 \end{folgerung}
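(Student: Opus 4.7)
The plan is to derive the field structure and both defining properties of real closedness (square roots of positives; odd-degree polynomials have real roots) by exhibiting each of the relevant operations as a continuous \sa function on a suitable open \sa domain, and then invoking Theorem~\ref{s:sa} together with Lemma~\ref{l:constant} to conclude that the operation carries $\F$-reals to $\F$-reals.

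For the field axioms I would argue as follows. The maps $+,-,\cdot:\R^2\to\R$ are continuous \sa on the open \sa set $\R^2$, so by Theorem~\ref{s:sa} they are \low, hence in every good class $\F$, and Lemma~\ref{l:constant} shows that $\R_\F$ is closed under them. Similarly, $x\mapsto 1/x$ is continuous \sa on the open \sa set $\R\setminus\{0\}$, so lies in $\F$; applied to a nonzero $\F$-real it produces the $\F$-real inverse. Since $\R_\F\subseteq\R$, it inherits the total order, making it an ordered subfield of $\R$.

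For real closedness I would first treat square roots: $\sqrt{\cdot}:(0,\infty)\to\R$ is continuous \sa on an open \sa set, hence in $\F$, so every positive $\F$-real has a square root in $\R_\F$ (and $\sqrt{0}=0$ is trivially in $\R_\F$). For odd-degree roots, for each odd $d$ I would consider the function $\rho_d:\R^d\to\R$ sending $(a_0,\ldots,a_{d-1})$ to the largest real root of $X^d+a_{d-1}X^{d-1}+\cdots+a_0$. This map is \sa by quantifier elimination, and I would show it is continuous on all of $\R^d$; once that is established, Theorem~\ref{s:sa} puts $\rho_d$ in $\F$, and evaluating at coefficients in $\R_\F$ gives an $\F$-real root. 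A general odd-degree polynomial with coefficients in $\R_\F$ reduces to the monic case by dividing through by its leading coefficient, whose $\F$-real inverse is available from the previous step.

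The main obstacle is verifying the continuity of $\rho_d$ at coefficient values where several real roots coalesce. Semialgebraicity comes for free, but continuity must be argued: using that a monic polynomial of odd degree $d$ satisfies $p(x)\to\pm\infty$ as $x\to\pm\infty$ and that all real roots are bounded a priori in terms of the coefficients, one confines the largest real root to a compact interval on which uniform convergence of the perturbed polynomials, combined with the sign change of $p$ across its largest root, forces the largest root of a small perturbation to stay within $\epsilon$ of the original. With that continuity in hand, every step above is an application of the general principle stated in the first paragraph.
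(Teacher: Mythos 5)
Your treatment of the field operations and of square roots is fine and matches the paper's strategy (Theorem~\ref{s:sa} plus Lemma~\ref{l:constant}). The gap is in the odd-degree step: the function $\rho_d$ sending $(a_0,\ldots,a_{d-1})$ to the largest real root of $X^d+a_{d-1}X^{d-1}+\cdots+a_0$ is \emph{not} continuous on $\R^d$, and your continuity argument via ``the sign change of $p$ across its largest root'' breaks down exactly where it matters. The largest real root of an odd-degree monic polynomial need not be a sign-change root: it can have even multiplicity. Concretely, for $d=3$ take $p_t(X)=(X+10)(X^2-t)$, whose coefficients depend continuously on $t$. For $t\geq 0$ the largest real root is $\sqrt{t}$, while for $t<0$ the only real root is $-10$; so $\rho_3$ jumps from $0$ to $-10$ at the tuple corresponding to $X^2(X+10)$. (The same phenomenon defeats ``smallest root'', which is the selection the paper actually uses.) Since Theorem~\ref{s:sa} applies only to \emph{continuous} \sa functions on open \sa sets, your appeal to it for $\rho_d$ is not justified, and the discontinuity is not an artifact of the argument but a genuine feature of every global root-selection function.

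The paper's proof confronts exactly this problem and resolves it with \sa cell decomposition: $\R^n$ is partitioned into finitely many $0$-definable \sa cells on each of which the root-selection function \emph{is} continuous. A further wrinkle is that the cell containing your coefficient tuple need not be open in $\R^n$, so Theorem~\ref{s:sa} still does not apply directly; the paper composes with the \sa homeomorphism from the cell onto an open subset of some $\R^k$ and applies Theorem~\ref{s:sa} and Lemma~\ref{l:constant} to $\pi^{-1}$ and then to $f\circ\pi^{-1}$ on that open set. If you want to salvage your outline, you must either import this cell-decomposition step or find some other way to select a root by a function that is continuous on an open \sa neighborhood of the given coefficient tuple; restricting attention to separable polynomials or to sign-change roots does not suffice, because the given polynomial may have its relevant root with multiplicity greater than one.
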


 \begin{proof}
 By Theorem~\ref{s:sa}, $\R_\F$ is a field.  To see that $\R_\F$ is
 real closed consider for odd $n\in\N$ the \sa function $f:\R^n\to \R$
 where $f(a_0,\ldots a_{n-1})$ is the minimal zero of the polynomial
 $\sum_{i=0}^{n-1} a_iX^i + X^n$.  By \sa cell decomposition (see
 \cite{vdDries}, Ch.~3), $\R^n$ can be decomposed into finitely many
 \sa cells on which $f$ is continuous. Each cell is homeomorphic to an
 open subset of $\R^k$ for some $k\geq 0$ via the appropriate (\sa)
 projection map. Thus, composing the inverse of such a projection
 $\pi$ with $f$ we obtain a \sa map on an open subset of $\R^k$.
 Applying Theorem~\ref{s:sa} and Lemma~\ref{l:constant} first to
 $\pi^{-1}$ and then to the composition, we see that a polynomial with
 coefficients in $\R_\F$ has a zero in $\R_\F$.
 \end{proof}

Corollary~\ref{f:real_closed} was first proved by Skordev
(see~\cite{skordev2}). For countable good classes \F, like the class
of \low functions, clearly $\R_\F$ is a countable subfield of $\R$.

\section{Integration}
\begin{satz}\label{s:int}
  Let $O\subset\R^N$ be open and $G,H:O\to\R$ in \F
  such that $G<H$ on $O$. Put
  \[U=\bigl\{(x,y)\in\R^{N+1}\mid x\in O,\; G(x)< y < H(x)\bigr\}.\]
  Assume further that $F:U\to\R$ is in \F and that $|F(x,y)|$ is bounded
  by an \F--function $K(x)$. Then
  \begin{equation}\label{e:int}
    I(x)=\int_{G(x)}^{H(x)}F(x,y)\dd y
  \end{equation}
  is an\/ \F--function $O\to\R$.
  \end{satz}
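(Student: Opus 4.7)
The plan is to approximate $I(x)$ by a rational Riemann sum whose nodes are chosen to lie safely inside $(G(x),H(x))$, then translate the whole construction into \F--functions. Fix $e\in\Np$ and let $x\in O_e$. By Lemma~\ref{l:bounded}, there are \F--bounds $L(e)\geq |G(x)|,|H(x)|$ and $B(e)\geq K(x)\geq|F(x,y)|$; in particular $H(x)-G(x)\leq 2L(e)$.

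Trim the interval by $\eta=\eta(e):=\inv{8eB(e)}$: the trimmed integral $\int_{G(x)+\eta}^{H(x)-\eta}F(x,y)\,dy$ differs from $I(x)$ by at most $2\eta B(e)\leq\inv{4e}$. Using the uniform continuity of $G,H$ on $O_e$ (Lemma~\ref{l:stetig}) together with their bound $L(e)$, one finds an \F--index $E=E(e)\geq e$ such that whenever $x\in O_e$ and $y$ lies in a small neighborhood of $[G(x)+\eta,H(x)-\eta]$, the point $(x,y)$ lies in $U_E$. Choose an \F--function $M=M(e)$ large enough that subdividing $[G(x)+\eta,H(x)-\eta]$ into $M$ equal parts of width $\leq 2L(e)/M$ makes the midpoint Riemann sum $R(x)$ agree with the trimmed integral to within $\inv{4e}$; this requires $M$ to dominate the modulus of continuity of $F$ on $U_E$, which is extractable from the defining \F--function $d_F$ of $F$.

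Now approximate $R(x)$ rationally. Given $a\in\Q^N$ with $|x-a|<\inv{d(e)}$ for a suitable \F--function $d$, use the \F--definitions of $G$ and $H$ to obtain rational numbers $\tilde G,\tilde H$ within $\inv{E}$ of $G(x),H(x)$; pick rational equidistant nodes $\tilde y_i\in[\tilde G+\eta,\tilde H-\eta]$; apply the \F--definition of $F$ at each $(a,\tilde y_i)$, noting that $(x,\tilde y_i)\in U_E$ and that $(a,\tilde y_i)$ is within $\inv{d_F(E)}$ of $(x,\tilde y_i)$ once $d(e)\geq d_F(E)$; then combine everything via Lemma~\ref{l:q-add} to produce a rational $f(a,e)$ within $\inv{4e}$ of $R(x)$. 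The four error contributions are each $\leq\inv{4e}$, totalling $<\inv{e}$.

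The main obstacle is the simultaneous bookkeeping of all indices: the single \F--function $d$ must guarantee at once that (i) each sample point $(a,\tilde y_i)$ is close enough to a genuine point of $U_E$ for $F$'s \F--data to apply, (ii) the rational approximants $\tilde G,\tilde H$ are accurate enough to place the nodes inside $[G(x)+\eta,H(x)-\eta]$, and (iii) the bounded summation in Lemma~\ref{l:q-add} yields a rational value within $\inv{4e}$ of $R(x)$. Each of these constraints is an \F--expression in $\beta_G,\beta_H,\beta_K,d_G,d_H,d_F$, so all required quantities are themselves \F--functions of $e$, and closure under composition and bounded summation delivers $f(\cdot,e)$ as an \F--function on $\Q^N\times\N$, proving the theorem.
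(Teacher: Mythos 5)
Your proposal takes essentially the same route as the paper's proof: trim the fiber $(G(x),H(x))$ by an amount $\eta$ small relative to the bound on $|F|$, replace the trimmed integral by a Riemann sum whose mesh is controlled by the effective moduli of continuity extracted from the \F--data of $F$, $G$ and $H$, evaluate $F$'s rational witness at the (rationally displaced) nodes, and turn the resulting finite sum into an \F--function of $(a,e)$ via Lemma~\ref{l:q-add}; the error bookkeeping you describe is exactly what the paper carries out with its parameters $e'$, $e''$, $S$, $\delta$. The one step that would fail as written is the degenerate case $H(x)-G(x)<2\eta$ (or $\tilde H-\tilde G$ too small), where "equidistant nodes in $[\tilde G+\eta,\tilde H-\eta]$" is meaningless and formally chosen nodes would leave $U$, invalidating the use of $F$'s \F--data; the paper splits this off explicitly (if $h(a,e'')-g(a,e'')$ is below a threshold it outputs $0$), and the same fix works for you since $|I(x)|\leq(H(x)-G(x))B(e)$ is then already below your trimming tolerance $\inv{4e}$.
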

\begin{proof}
 We fix witnesses for $F,G,H$ being in \F: let
 $d:\N\to\N$, $f:\Q^{N+1}\times\N\to\Q$, $g,h:\Q^N\times \N\to\Q$ be
 \F--functions such that
 for $x\in O_e$, $a\in\Q^N$, $b\in\Q$
 \begin{align}
   |x-a|<\inv{d(e)}\Rightarrow\;&|G(x)-g(a,e)|<\inv{e} \mathrm{ \hspace{.5cm} and}\\
   & |H(x)-h(a,e)|<\inv{e}\\
   \intertext{and,
     if $|y|\leq e$ and $y\in[G(x)+\inv{e},H(x)-\inv{e}]$,}
   \label{e:f_absch} |x-a|<\inv{d(e)}&\land |y-b|<\inv{d(e)}\Rightarrow
   |F(x,y)-f(a,b,e)|<\inv{e}.
 \end{align}
 By Lemma \ref{l:bounded} we get an \F--function $\kappa:\N\to
 \N$ with $|F(x,y)|\leq\kappa(e)$  for all $(x,y)\in U$ and $H(x)-G(x)\leq\kappa(e)$
 for all $x\in O_e$. We assume that $\kappa\geq 2$.

 Let $d'(e)=d(e'')$, where
 $e'=12e\kappa(e)$ and
 $e''=2d(e')$. Define the function $j:\Q^{N}\times\N\to\Q$ as follows:
 if $h(a,e'')-g(a,e'')< \frac{4}{e'}$, set $j(a,e)=0$. Otherwise set
 \[j(a,e)=\sum_{s=0}^{S-1}f(a,g(a,e'')+\inv{e'}+s\delta,e')\cdot\delta,\]
where $S=\kappa(e)e''$ and
$\delta=\inv{S}(h(a,e'')-g(a,e'')-\frac{2}{e'})$. By Lemma \ref{l:q-add}
there is a function $i'(a,S,2e)=i(a,e)$ in \F with $|j(a,e)-i(a,e)|<\inv{2e}$.\\

In order to show that for all $a\in\Q^N$ and $x\in O_e$
\begin{equation}
 |x-a|<\inv{d'(e)} \Rightarrow  \bigl|I(x)-i(a,e)\bigr| < \inv{e}\label{e:i}
\end{equation}
 it suffices to show
  \[|x-a|<\inv{d'(e)} \Rightarrow  \bigl|I(x)-j(a,e)\bigr| < \inv{2e}.\]

Since\footnote{Remember that $d$ is strictly increasing by
  assumption.} $e\leq e'\leq e''$, the hypothesis implies $x\in
O_{e'}\subseteq O_{e''}$.  We also have $|x-a|<\inv{d(e'')}$ and
therefore
\begin{align}
  |G(x)-g(a,e'')|&<\inv{e''}\label{e:g}\\
  |H(x)-h(a,e'')|&<\inv{e''}\label{e:h}.
\end{align}

\noindent First case: $h(a,e'')-g(a,e'')< \frac{4}{e'}$. Using
\eqref{e:g} and \eqref{e:h} we have
$|H(x)-G(x)|<\frac{2}{e''}+\frac{4}{e'}\leq\frac{6}{e'}$. Therefore
$\bigl|I(x)\bigr|<\frac{6}{e'}\kappa(e)=\inv{2e}$, which was to be
shown.\\

\noindent Second case: $h(a,e'')-g(a,e'')\geq\frac{4}{e'}$. This
implies $H(x)-G(x)\geq\frac{4}{e'}-\frac{2}{e''}\geq\frac{2}{e'}$.
Thus $\Delta=\inv{S}(H(x)-G(x)-\frac{2}{e'})$ is non-negative. Note
that by \eqref{e:g} and \eqref{e:h}
\begin{equation}
  |\Delta-\delta|<\frac{2}{Se''}.
\end{equation}
We also have \[\Delta<\frac{\kappa(e)}{S}=\inv{e''}.\]
It is easy to see that for each $s\leq S$, we have
\begin{multline}
\bigl|\bigl(G(x)+\inv{e'}+s\Delta\bigr)-\bigl(g(a,e'')+
\inv{e'}+s\delta\bigr)\bigr|\leq\\
\max\bigl(|G(x)-g(a,e'')|,|H(x)-h(a,e'')|\bigr)<\inv{e''}.
\end{multline}
This implies for every
$y\in[G(x)+\inv{e'}+s\Delta,G(x)+\inv{e'}+(s+1)\Delta]$ that
\[\bigl|y-\bigl(g(a,e'')+\inv{e'}+s\delta\bigr)\bigr|<\inv{e''}+
\Delta\leq\frac{2}{e''}=\inv{d(e')}.\]
Since $|x-a|<\inv{d(e')}$, we have therefore
\begin{equation}\label{e:F-f}
  \bigl|F(x,y)-f\bigl(a,g(a,e'')+\inv{e'}+s\delta,e'\bigr)\bigr|<\inv{e'},
\end{equation}
which implies
\begin{equation}\label{e:1}
  \Bigl|\;\int_{G(x)+\inv{e'}}^{H(x)-\inv{e'}}F(x,y)\dd y\;\;-\;\;
  \sum_{s=0}^{S-1}f\bigl(a,g(a,e'')+\inv{e'}+s\delta,e'\bigr)\cdot\Delta\;
  \Bigr|<\frac{\kappa(e)}{e'}.
\end{equation}
By \eqref{e:F-f} we have that all
$f\bigl(a,g(a,e'')+\inv{e'}+s\delta,e'\bigr)$ are bounded by
$\kappa(e)+1$. It follows that
\begin{multline}\label{e:2}
  \Bigl|\;
  \sum_{s=0}^{S-1}f\bigl(a,g(a,e'')+\inv{e'}+s\delta,e'\bigr)\cdot\Delta\;
  \;-\;j(a,e)\;\Bigr|\leq\\
  S(\kappa(e)+1)|\Delta-\delta|\leq\frac{2(\kappa(e)+1)}{e''}.
\end{multline}
Finally the absolute values of $\int_{G(x)}^{G(x)+\inv{e'}}F(x,y)\dd
y$ and $\int_{H(x)-\inv{e'}}^{H(x)}F(x,y)\dd y$ are bounded by
$\frac{\kappa(e)}{e'}$. By \eqref{e:1} and
\eqref{e:2}, this yields
\[\bigl|I(x)-j(a,e)\bigr|<\frac{2(\kappa(e)+1)}{e''}+
\frac{\kappa(e)}{e'}+\frac{2\kappa(e)}{e'}\leq
\frac{6\kappa(e)}{e'}=\inv{2e}.\]
This proves (\ref{e:i}).
\end{proof}

An examination of the proof yields:

\begin{folgerung}
  If $F,G,H$ are as above and uniformly in \F, then the function
  \[x\mapsto\int_{G(x)}^{H(x)}F(x,y)\dd y\]
  is uniformly in \F.
\end{folgerung}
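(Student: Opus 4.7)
The plan is to rerun the proof of Theorem~\ref{s:int} verbatim, substituting $O_e\mapsto O\restriction e$ (and implicitly $U_e\mapsto U\restriction e$) wherever they occur, and to verify that each estimate still goes through. Under the uniform hypothesis, Lemma~\ref{l:bounded} produces an \F--function $\kappa$ with $|F(x,y)|\leq\kappa(e)$ on $U\restriction e$ and $H(x)-G(x)\leq\kappa(e)$ on $O\restriction e$, and the uniform approximability of $F,G,H$ makes the inequalities corresponding to \eqref{e:g}, \eqref{e:h}, and \eqref{e:f_absch} available for all $x\in O\restriction e''$ and $(x,y)\in U\restriction e'$. I would define $d'$, $j$, $i$ by exactly the same formulas as in the original proof.

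The one point requiring verification is that the sample points $(x,G(x)+\inv{e'}+s\Delta)$ at which \eqref{e:F-f} is invoked actually lie in $U\restriction e'$, so that the uniform approximation of $F$ applies. The $x$-coordinate is bounded since $x\in O\restriction e\subseteq O\restriction e'$, and the $y$-coordinate is controlled by $\max(|G(x)|,|H(x)|)\leq\kappa(e)$, which we may assume is dominated by $e'=12e\kappa(e)$ after enlarging $\kappa$ if necessary. With the sample points safely inside $U\restriction e'$, the pointwise bound \eqref{e:F-f} holds for every $x\in O\restriction e$ satisfying $|x-a|<\inv{d'(e)}$.

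Given this, the rest of the argument is unchanged: the first-case bound on $|I(x)|$, the Riemann-sum estimates \eqref{e:1} and \eqref{e:2}, and the edge contributions $\int_{G(x)}^{G(x)+\inv{e'}}F\dd y$ and $\int_{H(x)-\inv{e'}}^{H(x)}F\dd y$ all depend only on the pointwise data just secured and on the algebraic structure of the Riemann sum. Summing yields $|I(x)-i(a,e)|<\inv{e}$ with $x$ ranging over all of $O\restriction e$, which is precisely the uniform \F--approximation of $I$.

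The main obstacle is essentially bookkeeping rather than new combinatorial content: one has to check systematically that every use of ``$x\in O_e$'' in the original proof is in fact only a use of $x\in O\restriction e$ together with the uniform approximation and boundedness data, so that the calculations close off on the larger sets without modification.
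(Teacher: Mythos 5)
Your proposal is correct and follows exactly the route the paper intends by its one-line justification ``an examination of the proof yields'': rerun the proof of Theorem~\ref{s:int} with $O_e$, $U_e$ replaced by $O\restriction e$, $U\restriction e$, the only substantive check being that the sample points lie where the uniform approximation of $F$ applies. One bookkeeping refinement: $\kappa(e)$ should be chosen to bound $|F(x,y)|$ on the whole fiber $\{(x,y)\in U\mid x\in O\restriction e\}$ --- which your bound $\max(|G(x)|,|H(x)|)\leq\kappa(e)$ makes available as $\kappa(\max(e,\kappa(e)))$, using that $F$ is uniformly \F-bounded --- and not merely on $U\restriction e$, since otherwise the first-case estimate and \eqref{e:2} produce $\kappa(e')$ where the final computation $\frac{6\kappa(e)}{e'}=\inv{2e}$ needs $\kappa(e)$.
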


\section{Periods}\label{sec:periods}
Kontsevich and Zagier \cite{kontsevich-zagier} define a period as
\emph{a complex number whose real and imaginary parts are values of
absolutely convergent integrals of rational functions with rational
coefficients over domains in $\R^n$ given by polynomial
inequalities with rational coefficients.} The periods form a ring
containing all algebraic reals. It is an open problem whether $e$ is a
period.

Yoshinaga \cite{yoshinaga_periods} proved that periods are
\emph{\el{}.} An analysis of his proof shows that he actually showed
that periods are \low. We here give a variant of his proof where part
of his argument is replaced by an application of Theorem \ref{s:int}.

\begin{definition}
  A $1$--dimensional bounded open cell is a bounded open interval with
  algebraic endpoints.  An $n+1$--dimensional bounded open cell is of
  the form
  \[\bigl\{(x,y)\in\R^{N+1}\mid x\in O,\; G(x)< y < H(x)\bigr\}\]
  for some $n$--dimensional bounded open cell $O$ and
  bounded continuous \sa functions $G<H$ from $O$ to $\R$.
\end{definition}
Thus our  cells are \sa.
\begin{lemma}
    Let $C$ be an $N$--dimensional bounded open cell and $N=A+B$. Let
    $O$ be the projection on the first $A$--coordinates and for $x\in
    O$ let $C_x$ be the fiber over $x$. Then the map
    $x\mapsto\vol(C_x)$ is a bounded \low function.
\end{lemma}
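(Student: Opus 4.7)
The plan is to induct on $B$, using Fubini to peel off one integration variable at a time and then apply the integration theorem (Theorem \ref{s:int}) together with Theorem \ref{s:sa}. The key observation is that the splitting $N=A+B$ is flexible: we may re-split as $N=(A+1)+(B-1)$ during the inductive step. Concretely, write $O^{(i)}$ for the projection of $C$ onto its first $i$ coordinates. By the recursive structure of cells, $O^{(i+1)}$ is itself a cell of the form
\[O^{(i+1)}=\{(z,w)\in O^{(i)}\times\R\mid G_{i+1}(z)<w<H_{i+1}(z)\}\]
for bounded continuous \sa functions $G_{i+1}<H_{i+1}$ on $O^{(i)}$, and $O=O^{(A)}$.

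The base case $B=1$ is immediate: $C_x=(G_{A+1}(x),H_{A+1}(x))$, so $\vol(C_x)=H_{A+1}(x)-G_{A+1}(x)$, which is \low on $O$ by Theorem \ref{s:sa} and bounded because the cell is bounded. For the inductive step (with $B\geq 2$), Fubini gives
\[\vol(C_x)=\int_{G_{A+1}(x)}^{H_{A+1}(x)}\vol\bigl(C_{(x,y)}\bigr)\,dy\quad\text{for }x\in O,\]
where $\vol(C_{(x,y)})$ denotes the $(B-1)$-dimensional fiber volume over $(x,y)\in O^{(A+1)}$. Applying the inductive hypothesis to $C$ with the splitting $N=(A+1)+(B-1)$, the map $(x,y)\mapsto\vol(C_{(x,y)})$ is a bounded \low function on $O^{(A+1)}$. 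Theorem \ref{s:sa} ensures $G_{A+1},H_{A+1}$ are \low on $O$, and they are bounded since the cell is bounded. All hypotheses of Theorem \ref{s:int} are therefore in place (with the open set $U=O^{(A+1)}$), yielding that $\vol(C_x)$ is \low on $O$. Boundedness of $\vol(C_x)$ is trivial: if $|C|\leq M$, then $\vol(C_x)\leq(2M)^B$.

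I do not anticipate a serious obstacle; the work is essentially bookkeeping to fit the cell's fiber volume into the hypotheses of Theorem \ref{s:int}. The one point that deserves a line of comment is that the induction really does close — the re-splitting $N=(A+1)+(B-1)$ applies the lemma to the \emph{same} cell $C$, just with a different choice of how many coordinates to call "base" and how many to call "fiber". This is what lets us use Theorem \ref{s:int} (which peels off a single integration variable) to handle cells of arbitrary fiber dimension.
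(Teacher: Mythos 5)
Your proof is correct and follows essentially the same route as the paper's: induction on $B$, applying the inductive hypothesis to the projection onto the first $A+1$ coordinates (your re-splitting $N=(A+1)+(B-1)$), then Fubini and Theorem~\ref{s:int}, with Theorem~\ref{s:sa} supplying lowness of $G_{A+1}$ and $H_{A+1}$. The paper leaves the base case and the boundedness check implicit, but these are exactly as you describe.
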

\begin{proof}
    By induction on $B$. Let $U$ be the projection on the first $A+1$
    coordinates. Then $U$ is of the form
    \[\bigl\{(x,y)\in\R^{A+1}\mid x\in O,\; G(x)< y <
    H(x)\bigr\}\] for some bounded and continuous \sa functions $G<H$.
    By induction hypothesis, $u\mapsto\vol(C_u)$ is a bounded \low
    function $U\to\R$. By Fubini
    \[\vol(C_x)=\int_{G(x)}^{H(x)}\vol(C_{x,y})\dd y.\] This is
    bounded and \low by Theorem \ref{s:int}.
\end{proof}
\begin{folgerung}
  The volumes of bounded \sa sets are \low.
\end{folgerung}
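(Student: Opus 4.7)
The plan is to invoke \sa cell decomposition to reduce the corollary to the preceding lemma. Given a bounded \sa set $S\subseteq\R^N$, I would apply cell decomposition (see \cite{vdDries}) without parameters to obtain a disjoint finite partition of $S$ into cells that are themselves \sa and $0$-definable. Since cells of dimension strictly less than $N$ contribute $0$ to the $N$-dimensional volume, it suffices to compute the volumes of the finitely many $N$-dimensional open cells in the decomposition and sum them; Corollary~\ref{f:real_closed} guarantees that a finite sum of \low reals is again a \low real.

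Next, for a fixed $N$-dimensional bounded open cell $C$, its projection onto the first coordinate is a $1$-dimensional bounded open cell, namely an interval $(\alpha,\beta)$ with algebraic endpoints. Applying the preceding lemma with $A=1$ and $B=N-1$ shows that the fiber-volume function $x\mapsto\vol(C_x)$ is a bounded \low function on $(\alpha,\beta)$. By Fubini,
\[
\vol(C)=\int_\alpha^\beta\vol(C_x)\dd x.
\]
Since $\alpha$ and $\beta$ are algebraic, they are \low reals (either directly, or via Corollary~\ref{f:real_closed} applied to the defining polynomial), so the constant functions with values $\alpha$ and $\beta$ are uniformly \low by Lemma~\ref{l:constant}. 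Applying Theorem~\ref{s:int} with base $O$ taken to be the $0$-dimensional singleton---permitted since the paper explicitly allows $N=0$---now identifies $\vol(C)$ with a \low function on a point, i.e., a \low real.

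The main obstacle I expect is the bookkeeping at the interface with cell decomposition: checking that the decomposition in \cite{vdDries}, when applied to a $0$-definable set, genuinely yields cells matching this paper's definition (in particular that the innermost $1$-dimensional cells really have algebraic endpoints when no parameters are allowed, and that the bounding \sa functions at higher levels are continuous and bounded on the cells they sit over), and confirming that Theorem~\ref{s:int} applies cleanly in the degenerate case $N=0$ with constant bounds. Both points are routine but deserve a brief verification.
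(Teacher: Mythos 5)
Your proposal is correct and follows the paper's route: decompose into cells and invoke the preceding fiber-volume lemma. The only (immaterial) difference is that the paper simply takes $A=0$ in that lemma to get $\vol(C)$ directly as a \low real, whereas you take $A=1$ and perform one further integration via Theorem~\ref{s:int}, which just unrolls one step of the lemma's own induction; your explicit remarks that lower-dimensional cells contribute zero volume and that finite sums of \low reals are \low are points the paper leaves implicit.
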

\begin{proof}
  Since every \sa set is the disjoint union of \sa cells, it is enough
  to know the claim for bounded \sa cells, which is the $N=0$ case of
  Lemma \ref{l:constant}.
\end{proof}

\begin{folgerung}
    Periods are \low.
\end{folgerung}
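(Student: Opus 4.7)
The plan is to reduce the claim to the previous corollary (volumes of bounded \sa sets are \low) via Kontsevich and Zagier's equivalent formulation of periods. By definition, the real part of a period is an absolutely convergent integral $\int_D f$ of a rational function $f$ with rational coefficients over a \sa set $D\subseteq\R^n$ defined over $\Q$. My starting point would be the observation in~\cite{kontsevich-zagier} that any such number can equivalently be written as $\vol(A) - \vol(B)$, where $A$ and $B$ are \emph{bounded} \sa sets over $\Q$.

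To establish this equivalence (which one can cite or reprove in a few lines), I would proceed in three standard steps. First, split $f = f^+ - f^-$ into its positive and negative parts (both nonnegative and \sa over $\Q$), reducing to integrals of nonnegative \sa integrands. Second, apply the ``area under the graph'' identity
\[\int_D g(x)\,\dd x = \vol\bigl\{(x,y)\mid x\in D,\; 0\leq y\leq g(x)\bigr\},\]
which replaces each integral by the volume of a \sa set over $\Q$ of finite measure. Third, perform a \sa change of coordinates over $\Q$ (for example the componentwise substitution $x\mapsto x/(1+x^2)$, together with an analogous compactification in the $y$-direction) that turns the resulting set into a bounded \sa set over $\Q$ of the same volume.

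Once the period is expressed as $\vol(A) - \vol(B)$ for bounded \sa sets $A,B$ over $\Q$, the previous corollary yields that both $\vol(A)$ and $\vol(B)$ are \low reals, and Corollary~\ref{f:real_closed} (applied with $\F$ the class of \low functions) states that $\R_\F$ is a field, so their difference is again \low. For a complex period, the same argument applied separately to the real and imaginary parts gives the conclusion.

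The main obstacle here is not the bit-model computability step at all but the classical Kontsevich--Zagier geometric reduction of absolutely convergent rational-function integrals to differences of bounded \sa volumes; once that is granted, the previous corollary together with the field property of the \low reals finishes the argument in a single line.
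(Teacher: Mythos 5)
Your overall route coincides with the paper's: the proof there is a one--liner citing Lemma~24 of \cite{yoshinaga_periods}, which says that periods are differences of sums of volumes of bounded open \sa cells, after which the preceding corollary (together with closure of the \low reals under subtraction, i.e.\ Corollary~\ref{f:real_closed}) finishes the argument exactly as you describe. So if you treat the Kontsevich--Zagier reduction purely as a citation, your proof is fine and is essentially the paper's proof.

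The problem is your claim that this reduction can be reproved ``in a few lines'', because Step 3 of your sketch fails as stated: there is no \sa change of coordinates carrying an unbounded set onto a bounded one \emph{of the same volume}. Such a map would need Jacobian of absolute value $1$ almost everywhere, which is incompatible with having bounded image; and the specific map $x\mapsto x/(1+x^2)$ is not even injective on $\R$ (it identifies $x$ with $1/x$). The actual reduction uses a \sa bijection such as $x=u/(1-u^2)$ coordinatewise and \emph{absorbs the Jacobian into the integrand}; the new integrand is again \sa over $\Q$ but is in general unbounded near the boundary of the new (bounded) domain, so the ``area under the graph'' set produced by your Step 2 is still unbounded in the $y$--direction, and one must compactify there as well while keeping track of absolute convergence. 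This bookkeeping is precisely the content of Yoshinaga's Lemma~24 and of the corresponding remark in \cite{kontsevich-zagier}; it is a genuinely nontrivial (if classical) step, not a one--line substitution. Either cite it outright, as the paper does, or carry out the Jacobian argument in full.
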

\begin{proof}
    By Lemma 24 of \cite{yoshinaga_periods}, periods are differences of
    sums of volumes of bounded open \sa cells.
\end{proof}
\section{The Inverse Function Theorem}\label{sec:ift}

We call a sequence $\A_1, \A_2,\ldots$ of subsets of $\Q^N$ an
\emph{\F--sequence}, if $\{(e,a)\mid a\in\A_e\}$ is an \F--subset of
$\Np\times \Q^N$.

\begin{definition}\label{d:fopen}
  An open set $O\subset\R^N$ is \F--\ap if there is an \F--sequence
  $\A_1, \A_2,\ldots$ of subsets of $\Q^N$ and an \F--function
  $\alpha:\N\to\N$ such that
  $O_e\cap\Q^N\subset\A_e\subset O_{\alpha(e)}$ for all $e\in\Np$.
\end{definition}
It follows from Lemma \ref{l:spur} that \sa sets $O$ are \low \ap. We
can simply set $\A_e=O_e\cap\Q^N$.

The following observations will not be used
in the sequel. For the last part, we need Lemma~\ref{l:gamma}.

\begin{bemerkung}\label{b:approx}
  \begin{enumerate}
  \item\label{b:approx:intervall}The intervals $(x,\infty)$,
    $(-\infty,y)$ and  $(x,y)$ are \F--\ap if
    and only if $x$ and $y$ are \F--reals.
  \item\label{b:approx:cell} If $O$ is \F--\ap and $F, G: O\to\R$ are
    in \F, then $\{(x,y)\mid x\in O, F(x)<y<G(x)\}$ is \F--\ap.
  \item\label{b:approx:homeo} If $F:O\to V$ is a homeomorphism and $F$
    and $F^{-1}$ are uniformly in \F, then $O$ is \F--\ap if and only if
    $V$ is.
  \end{enumerate}
\end{bemerkung}

\begin{satz}\label{s:ift}
  Let $F:O\to V$ be a bijection in \F where $O$ is \F--\ap and $V$
  open in $\R^N$. Assume that the inverse $G:V\to O$ satisfies:
  \begin{enumerate}[(i)]
  \item\label{s:ift:gleichm_stetig} There is an \F--function
    $d':\N\to\N$ such that $|G(y)-G(y')|<\inv{e}$ for all $y,y'\in
    V_e$ with $|y-y'|<\inv{d'(e)}$.
  \item\label{s:ift:compact} $G$ is \F--compact.
  \end{enumerate}
  Then $G$ is also in \F.
\end{satz}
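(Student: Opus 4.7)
The plan is to compute an approximation $g(b,e)$ of $G(y)$ for $y\in V_e$, given a rational $b$ close to $y$, by searching over a finite list of rational candidates for $a\approx G(y)$ and certifying each through the \F-witnesses of $F$. The key point is that since $G\circ F=\mathrm{id}$ on $O$, if we find $a\in O$ with $|F(a)-y|<1/T(e)$ where $T(e)=\max(4e,d'(2e))$, then $y\in V_e$ forces $F(a)\in V_{2e}$ and $|F(a)-y|<1/d'(2e)$, so hypothesis~(\ref{s:ift:gleichm_stetig}) applied at level $2e$ gives $|a-G(y)|=|G(F(a))-G(y)|<1/(2e)\leq 1/e$.

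Let $d_F$, $f$ be \F-witnesses for $F$, let $\beta$ be the \F-compactness bound from~(\ref{s:ift:compact}), and let $(\A_k)$ and $\alpha$ be the data witnessing that $O$ is \F--\ap (we may assume $\alpha(k)\geq k$). Choose \F-functions $\tilde E(e)=\max(\beta(e),9T(e))$, $m(e)=\max(\alpha(2\beta(e)),9T(e))$, $D(e)=\max(\beta(e),d_F(2\tilde E(e)))$, and $d_g(e)=9T(e)$. Define $g(b,e)$ to be the first rational $a=z/D(e)$, with $z\in\Z^N$ and $|z_i|\leq(2\beta(e)+1)D(e)$ enumerated in some canonical order, satisfying (a) $a\in\A_{2\beta(e)}$ and (b) $|f(a,m(e))-b|<1/(3T(e))$; take $g(b,e)=0$ if none exists. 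Since the search is bounded and both conditions are \F, $g$ is an \F-function.

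For correctness, any $a$ meeting (a), (b) lies in $O_{\alpha(2\beta(e))}\subseteq O_{m(e)}$ by approximability, so $|f(a,m(e))-F(a)|<1/m(e)\leq 1/(9T(e))$. Together with (b) and $|b-y|<1/d_g(e)=1/(9T(e))$, the triangle inequality gives $|F(a)-y|<1/T(e)$, and the key observation yields $|a-G(y)|<1/e$. For existence, let $a_0$ be the coordinatewise closest rational with denominator $D(e)$ to $G(y)\in O_{\beta(e)}$; then $|a_0-G(y)|\leq 1/(2D(e))\leq 1/(2\beta(e))$, so $a_0\in O_{2\beta(e)}\cap\Q^N\subseteq\A_{2\beta(e)}$, proving (a). Applying Lemma~\ref{l:stetig} on $O_{2\tilde E(e)}$ (which contains both $a_0$ and $G(y)$), the bound $|a_0-G(y)|<2/d_F(2\tilde E(e))$ gives $|F(a_0)-y|<1/\tilde E(e)\leq 1/(9T(e))$; combining with $|f(a_0,m(e))-F(a_0)|<1/(9T(e))$ and $|b-y|<1/(9T(e))$ yields (b).

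The main obstacle is juggling the nested \F-function levels $T,\beta,\tilde E,m,D$ so the three-term triangle estimates close up, and invoking approximability twice: once in the existence step to conclude $a_0\in\A_{2\beta(e)}$ from $a_0\in O_{2\beta(e)}\cap\Q^N$, and once in the correctness step to extract $a\in O_{\alpha(2\beta(e))}$ from membership $a\in\A_{2\beta(e)}$.
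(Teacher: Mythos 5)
Your proposal is correct and follows essentially the same strategy as the paper's proof: a bounded search over a finite grid of rational candidates inside the approximating sets $\A_{2\beta(e)}$, certified via the forward witness $f$ of $F$, with existence of a good candidate supplied by \F--compactness of $G$ and correctness by hypothesis (i) applied at level $2e$ after observing that $|F(a)-y|$ small forces $F(a)\in V_{2e}$. The only differences are cosmetic choices of constants ($9T(e)$ versus the paper's $d''$).
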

By the proof of Lemma \ref{l:stetig} and Remark \ref{b:gamma} below
the conditions (\ref{s:ift:gleichm_stetig}) and (\ref{s:ift:compact})
are necessary for the conclusion to hold.
\begin{proof}
  As $G$ is \F--compact, let $\gamma:\N\to \N$ be an \F--function such
  that $G(V_e)\subseteq O_{\gamma(e)}$.

  Since $F$ is in \F, we find \F--functions $d(e)$ and $f(a,e)$ such
  that for all $a\in\Q^N$ and $x\in O_e$
\begin{equation}\label{e:elu}
  |x-a|<\inv{d(e)} \to |F(x)-f(a,e)| < \inv{e}
\end{equation}
We also fix a function $\alpha$ and a sequence $\A_i,i\in\N_{>0}$ as in
Definition \ref{d:fopen}.

We now construct two \F--functions $d'':\N\to\N$ and
$g:\Q^N\times\N\to\Q^N$ such that
\begin{equation}\label{e:g-stetig}
  |y-b|<\inv{d''(e)} \to |G(y)-g(b,e)| < \inv{e}
\end{equation}
for all $e\in\Np$, $b\in\Q^N$ and $y\in V_e$.

Fix $e\in\N$ and $b\in\Q^N$. Set
\[d''=d''(e)=\max\bigl(\,4d'(2e),\,8e,
\,\alpha(2\gamma(e)), \,2\gamma(e)\bigr).\]
Also put $C=\max\bigl(\,2\gamma(e),\,d(d'')\bigr)$ and consider
the set
\[\A=\A_{2\gamma(e)}\cap\bigl(\,\inv{C}\,\,\Z\bigr)^N.\]
Since the elements of $\A$ are bounded by $\alpha(2\gamma(e))$, $\A$
is a finite set. If there is an $a\in\A$ such that
\begin{equation}\label{e:fab}
  \bigl|b-f(a,d'')\bigr|<\frac{2}{d''},
\end{equation}
we choose such an $a$ by an \F--function (!) $a=g(b,e)$. Otherwise put $g(b,e)=0$.\\

Let us check that $d''$ and $g$ satisfy \eqref{e:g-stetig}. Start with
$e$ and $b$ as above and consider an $y\in V_e$ with
$|y-b|<\inv{d''(e)}$.

We first show that $\A$ contains an element $a'$ with
$|b-f(a',d'')|<\frac{2}{d''}$.  For this set $x=G(y)\in O_{\gamma(e)}$.
Choose an
$a'\in\bigl(\inv{C}\,\,\Z\bigr)^N$ such that $|x-a'|<\inv{C}$. Since
$C\geq 2\gamma(e)$, we have $a'\in O_{2\gamma(e)}$ and therefore
$a'\in\A$. Since $d''\geq2\gamma(e)$ and $d(d'')\leq C$ we have $|y-f(a',d'')|<\inv{d''}$ by
\eqref{e:elu}. This implies
$|b-f(a',d'')|<\frac{2}{d''}$.

Now set $a=g(b,e)$. By the previous paragraph, we know that $a$ is in
$\A$ and satisfies \eqref{e:fab}. Since $a\in O_{\alpha(2\gamma(e))}$
and $d''\geq\alpha(2\gamma(e))$, we have
$\bigl|F(a)-f(a,d'')\bigr|<\inv{d''}$.  This implies
$|y-F(a)|\leq|y-b|+|b-f(a,d'')|+|f(a,d'')-F(a)|<\frac{4}{d''}$.  Since
$\frac{d''}{4}\geq 2e$, this implies $F(a)\in V_{2e}$.
Since $d'(2e)\leq\frac{d''}{4}$, we can use (\ref{s:ift:gleichm_stetig})
to obtain $|x-a|<\inv{e}$.
\end{proof}

\begin{lemma}\label{l:gamma}Let $G:V\to O$ be open and continuous.
  Suppose
  \begin{enumerate}
  \item\label{l:gamma:schranke} $G$ is \F--bounded.
  \item\label{l:gamma:gleichm_stetig} There is an \F--function
    $d':\N\to\N$ such that
    \[|G(x)-G(x')|<\inv{d'(e)}\;\to\;|x-x'|<\inv{e}\]
    whenever $G(x),G(x')\in O\restriction e$
  \end{enumerate}
  Then $G$ is \F--compact.
\end{lemma}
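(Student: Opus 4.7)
The plan is to set $\beta(e) := \max(\lceil\beta_G(e)\rceil, d'(E(e)))$ with $E(e) := \max(2e, \lceil\beta_G(2e)\rceil)$, and verify $G(V_e) \subseteq O_{\beta(e)}$. The bound $|G(y)| \leq \beta(e)$ is immediate from hypothesis (\ref{l:gamma:schranke}), so the real work is to establish $\di(G(y), \R^N\setminus O) \geq 1/\beta(e)$ for every $y \in V_e$.

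The first observation I would make is that hypothesis (\ref{l:gamma:gleichm_stetig}) forces $G$ to be injective: if $G(x)=G(x')$, pick any $e$ with $G(x) \in O\restriction e$ to get $|x-x'| < 1/e$, and let $e \to \infty$. Combined with the given openness and continuity of $G$, this makes $G|_B$ a homeomorphism onto its (open) image for every open ball $B$ with $\bar B \subseteq V$; an elementary point-set argument using injectivity of $G$ and compactness of $\bar B$ then gives $\overline{G(B)} = G(\bar B)$ and hence $\partial G(B) = G(\partial B)$.

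Fix $y \in V_e$ and set $B := B(y, 1/(2e))$. Since $y \in V_e$, one checks $\bar B \subseteq V_{2e}$, so by (\ref{l:gamma:schranke}) every $y' \in \bar B$ satisfies $|G(y')| \leq \lceil\beta_G(2e)\rceil \leq E$; in particular $G(y), G(y') \in O\restriction E$. Applying the contrapositive of (\ref{l:gamma:gleichm_stetig}) with parameter $E$ to any $y' \in \partial B$, for which $|y - y'| = 1/(2e) \geq 1/E$, we obtain $|G(y) - G(y')| \geq 1/d'(E)$. Thus $G(y)$ sits at distance at least $r := 1/d'(E)$ from $G(\partial B) = \partial G(B)$.

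The final, and main, step is upgrading this distance bound to $B(G(y), r) \subseteq O$. Here I would invoke a connectedness argument: $\R^N \setminus \partial G(B) = G(B) \sqcup (\R^N \setminus \overline{G(B)})$ as a disjoint union of open sets (using that $G(B)$ is open, by openness of $G$), and the connected open ball $B(G(y), r)$ misses $\partial G(B)$, so it lies in a single one of the two pieces. Since $G(y) \in G(B)$, this forces $B(G(y), r) \subseteq G(B) \subseteq O$, and therefore $\di(G(y), \R^N \setminus O) \geq r = 1/d'(E) \geq 1/\beta(e)$, as needed. The main obstacle is precisely this last topological step; once injectivity has been extracted from (\ref{l:gamma:gleichm_stetig}) and the identification $\partial G(B) = G(\partial B)$ is in place, the separation argument is clean.
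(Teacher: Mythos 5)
Your proof is correct and follows essentially the same route as the paper's: take a small ball $B$ in the domain around the given point, use the contrapositive of hypothesis (\ref{l:gamma:gleichm_stetig}) to push $G(\partial B)$ uniformly away from the image of the center, and then run a connectedness/separation argument (openness of $G$ plus compactness of $\bar B$) to conclude that a whole neighbourhood of the image point lies in $G(B)\subseteq O$. The only notable (and in fact slightly cleaner) deviation is that by choosing the parameter $E$ large enough that all of $G(\bar B)$ lands in $O\restriction E$, you avoid the paper's case split between ``$|y-y'|\geq 1/d'(2\beta(e))$'' and ``$|y'|>2\beta(e)$''.
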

\begin{bemerkung}\label{b:gamma}
  By Lemma \ref{l:bounded} and the proof of Lemma \ref{l:stetig},
  respectively, the conditions of Lemma~\ref{l:gamma} are satisfied if $G$ is a
  bijection in \F and $G^{-1}$ is uniformly in \F.
\end{bemerkung}
\begin{proof}[Proof of \ref{l:gamma}]
  Choose an \F--function $\beta$ such that $|G(x)|\leq \beta(e)$ for
  all $x\in V_e$, so $G(V_e)\subseteq O\restriction \beta(e)$.  We may
  assume $\beta(e)\geq 2e$.  Let $\gamma(e)=\max(2\beta(e),
  d'(2\beta(e)))$. We will show that $G(V_e)\subset O_{\gamma(e)}$ for
  all $e\in\Np$.

  Let $x\in V_e$ and $y=G(x)\in O\restriction
  \beta(e)\subseteq O\restriction \gamma(e)$. So we have to show that
  $\di(y,\R^N\setminus O)\geq\inv{\gamma(e)}$.  Let $B$ be the open
  ball\footnote{Actually this is a cube, since we use the maximum
    norm. } around $x$ with radius $\inv{e+1}$. Then the closure
  $\overline B=B\cup\delta B$ is still a subset of $V$. Since $G$ is
  continuous and open, $G(\overline B)$ is compact and $G(B)$ is open
  in $O$ and therefore in $\R^N$. Let $y''$ be any element in
  $\R^N\setminus O$. Look at the line segment $L$ between $y''$ and
  $y$. $L$ contains an element of $G(\delta B)$ since otherwise the
  traces of $G(B)$ and $\R^N\setminus G(\overline B)$ on $L$ were
  an open partition of $L$. So let $x'\in\delta B$ and $y'=G(x')\in
  L$. Then $|x-x'|= \inv{e+1}\geq \frac{1}{2e}$.

  By assumption
  \[|y-y'|<\frac{1}{d'(2\beta(e))} \to |x-x'| < \frac{1}{2\beta(e)}\]
  if $y,y'\in O\restriction 2\beta(e)$.  So there are two
  cases. Either $|y-y'|\geq
  \frac{1}{d'(2\beta(e))}\geq\inv{\gamma(e)}$ or $|y'| > 2\beta(e)$.
  But since $|y|\leq\beta(e)$, in this case we have $|y-y'| > 1$ and we
  are done either way.
\end{proof}

\begin{folgerung}\label{f:ift}
  Let $O$ be \F--\ap and\/ $V$ open and convex in $\R^N$ and $F:O\to V$
  a bijection which is uniformly in \F.  Assume that the inverse $G:V\to
  O$ is differentiable and that $|\D(G)|$ can be bounded by an
  \F--function $G':V\to\R$. Then $G$ belongs also to \F.
\end{folgerung}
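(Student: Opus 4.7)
The plan is to verify the two hypotheses of Theorem~\ref{s:ift} for $G$, namely (i) the existence of an \F--modulus of uniform continuity on each $V_e$ and (ii) \F--compactness, and then to invoke that theorem. The mean value theorem (exploiting the derivative bound $G'$) and Lemma~\ref{l:gamma} will be the principal tools.

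First I would observe that since $V$ is convex, each $V_e$ is convex as well: it is the intersection of the convex set $V\restriction e$ with $\{y:\di(y,\R^N\setminus V)\geq\inv{e}\}$, the latter inheriting convexity from $V$ by a Minkowski-erosion argument. Lemma~\ref{l:bounded} applied to $G'\in\F$ gives an \F--function $\beta$ with $G'(y)\leq\beta(e)$ on $V_e$. Combining this with $|\D G|\leq G'$ and the mean value theorem along the segment $[y,y']\subset V_e$ yields $|G(y)-G(y')|\leq\beta(e)|y-y'|$ for all $y,y'\in V_e$, so $d'(e)=e\beta(e)+1$ verifies~(i).

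For (ii) I would apply Lemma~\ref{l:gamma} to $G$. Its condition~(2) is uniform continuity of $F=G^{-1}$ on $O\restriction e$ with an \F--modulus, which the proof of Lemma~\ref{l:stetig} supplies (running that argument on $O\restriction e$ in place of $O_e$, which is permitted since $F$ is \emph{uniformly} in~\F). Condition~(1) demands an \F--bound on $|G|$ over $V_e$; assuming $V\neq\emptyset$, I would fix a basepoint $y_0\in V$ once and for all, choose $e_0$ with $y_0\in V_{e_0}$, and for $e\geq e_0$ use the Lipschitz estimate of the previous paragraph along $[y_0,y]\subset V_e$ to obtain $|G(y)|\leq|G(y_0)|+2e\beta(e)$. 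Absorbing $\lceil|G(y_0)|\rceil$ into the additive constant (handling $e<e_0$ by a finite adjustment) produces an \F--function $\beta_G$ with $|G(y)|\leq\beta_G(e)$ on $V_e$. Lemma~\ref{l:gamma} then delivers the \F--compactness required in~(ii), and Theorem~\ref{s:ift} concludes $G\in\F$.

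The step I expect to be most delicate is condition~(1) of Lemma~\ref{l:gamma}: a derivative bound controls only differences $G(y)-G(y')$, so converting this into an absolute bound on $|G|$ requires choosing a fixed anchor $y_0\in V$, and one must be careful to note that although $|G(y_0)|$ is an arbitrary real, its ceiling is a single natural-number constant and hence lies in every good class. Everything else is a routine chaining of \F--functions, the convexity of $V_e$, and the already-established uniform variant of Lemma~\ref{l:stetig}.
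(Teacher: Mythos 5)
Your proposal is correct and follows essentially the same route as the paper: condition (i) of Theorem~\ref{s:ift} via Lemma~\ref{l:bounded} applied to $G'$ plus the mean value theorem on segments in $V_e$ (the paper uses $V_{2e}$ for nearby points rather than convexity of $V_e$ itself, but this is immaterial), and condition (ii) via Lemma~\ref{l:gamma}, with the modulus for $F$ coming from its being uniformly in \F and the bound on $|G|$ from a fixed anchor $y_0\in V$ exactly as you describe. No gaps.
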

\noindent Here the norm of a matrix $A=(a_{i,j})$ is
$\max_{i}\sum_j|a_{i,j}|$.
\begin{proof}
It suffices to show that $G$ satisfies (\ref{s:ift:gleichm_stetig})
and (\ref{s:ift:compact}) of Theorem~\ref{s:ift}.

\noindent Proof of (\ref{s:ift:gleichm_stetig}): By Lemma
\ref{l:bounded}, there is an \F-function $\gamma:\N\to\Q$ such that
$|G'(y)|\leq\gamma(e)$ for all $y\in V_e$. Assume that $y,y'\in V_e$
and $|y-y'|<\inv{2e}$. Then the line segment between $y$ and $y'$ is
contained in $V_{2e}$ and it follows that
$|G(y)-G(y')|\leq|y-y'|\gamma(2e)$
So we can set $d'(e)=\max\bigl(2e,\frac{e}{\gamma(2e)}\bigr)$.\\

\noindent Proof of (\ref{s:ift:compact}): We have to verify the two
conditions of Lemma \ref{l:gamma}. Condition \ref{l:gamma:gleichm_stetig}
follows from the assumption that $F$ is uniformly in \F. It remains to show
that $G$ is \F-bounded.

 Fix some $y_0\in V$ and some $e_0$ with $y_0\in V_{e_0}$. If $e\geq
 e_0$ and $y\in V_e$, then the line segment between $y_0$ and $y$ lies
 in $V_e$. So
 \[|G(y)|\leq|G(y)-G(y_0)|+|G(y_0)|\leq|y-y_0|\gamma(e)+|G(y_0)|.\]
 We set $\beta(e)=2e'\gamma(e')+\lceil|G(y_0)|\rceil$, where
 $e'=\max(e,e_0)$, and have $|G(y)|\leq\beta(e)$.
\end{proof}
 The next proposition shows that for proper intervals we can weaken the assumptions:
\begin{proposition}\label{p:ifti}
  Let $F:O\to V\subseteq \R$ be a homeomorphism
  with inverse $G$ where $O=(c_0,c_1)$ is a bounded open interval
  whose endpoints are \F--reals.
  Suppose that $F$ belongs to \F  and that there is an
  \F--function $d':\N\to\N$ such that $|G(y)-G(y')|<\inv{e}$ for all
  $y,y'\in V_e$ with $|y-y'|<\inv{d'(e)}$. Then $G$ is in \F.
\end{proposition}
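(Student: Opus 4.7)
The plan is to apply Theorem~\ref{s:ift}, which requires $O$ to be \F--\ap and $G$ to satisfy its conditions (i) and (ii). Since $c_0$ and $c_1$ are \F--reals, $O=(c_0,c_1)$ is \F--\ap by item~(\ref{b:approx:intervall}) of Remark~\ref{b:approx}; condition (i) is exactly the stated hypothesis on $d'$. The remaining work is to verify the \F--compactness of $G$, i.e., to construct an \F--function $\gamma$ with $G(V_e)\subseteq O_{\gamma(e)}$.

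Since $O$ is bounded with \F--real endpoints, $G$ is \F--bounded (an \F--natural upper bound for $\max(|c_0|,|c_1|)$ suffices). The substantial task is to keep $G(y)$ uniformly away from $\{c_0,c_1\}$ for $y\in V_e$. I would use that $F$, as a continuous bijection between $1$--dimensional intervals, is strictly monotone (WLOG increasing; the decreasing case is symmetric, and the direction can be decided in an \F way using the anti-modulus of $F$ on $G(V_e)$ at a fixed pair of rationals in $O$). Since $c_0$ is an \F--real and $F\in\F$, each $F(c_0+\inv{n})$ is \F--approximable because $c_0+\inv{n}\in O_n$. I would then define $\gamma_L(e)$ by an \F--bounded search for the smallest $n$ at which the approximations certify $F(c_0+\inv{n})\notin V_e$ on the $\alpha_0$--side: either the approximation falls below $-e$ (the case $\alpha_0=-\infty$), or successive approximations of $F(c_0+\inv{2^k n})$ are certified to have stabilized within $\inv{e}$ of their limit (the case $\alpha_0$ finite). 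In either case $G(y)\geq c_0+\inv{n}$ for $y\in V_e$. A symmetric construction gives $\gamma_R(e)$, and $\gamma(e):=\max(\gamma_L(e),\gamma_R(e))$ combined with the bound for $|G|$ yields the needed \F--function.

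The main obstacle is producing the \F--bound on this search; this is where the $1$--dimensional hypothesis enters essentially. The contrapositive of the $d'$--condition is an anti-modulus for $F$ on $G(V_e)$: any two points $x,x'\in G(V_e)$ at distance $\geq\inv{e}$ have $F$--images at distance $\geq\inv{d'(e)}$. Applied to the monotone sequence $F(c_0+\inv{n})$ in $V_e$, the sum of $F$--gaps accumulated at the doubling points $n=2^k$ with $\inv{2^k}\geq\inv{e/2}$ is bounded above by $V_e$'s extent $2e$ and below by (number of doublings)$/d'(e)$, forcing only $O(e\cdot d'(e))$ doublings to remain inside $V_e$. A similar analysis of the "stability" condition at each doubling level produces the \F--bound on the search. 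Once $\gamma$ is in hand, Theorem~\ref{s:ift} delivers $G\in\F$.
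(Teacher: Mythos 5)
Your reduction to Theorem~\ref{s:ift} cannot be completed, because condition (ii) of that theorem --- the \F--compactness of $G$ --- does not follow from the hypotheses of the proposition and is in general false; the whole point of the proposition is that for a bounded interval with \F--real endpoints one may \emph{drop} that condition, not rederive it. Concretely, let \F be the \low functions, $O=(0,1)$ and
\[F(x)=1-\inv{\ln\bigl(\inv{1-x}\bigr)},\]
an increasing homeomorphism onto $V=(-\infty,1)$ which is \low on $O$ (compose the \sa, \low--compact map $x\mapsto\inv{1-x}:O\to(1,\infty)$ with $\ln$ and a rational function, using Corollary~\ref{f:comp-c}). Its inverse is $G(y)=1-\exp\bigl(-\inv{1-y}\bigr)$; writing $t=\inv{1-y}$ one gets $G'(y)=t^2\exp(-t)\leq 4\exp(-2)<1$ on all of $V$, so the modulus hypothesis holds with $d'(e)=e$. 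But $V_e=[-e,\,1-\inv{e}]$ and $G(1-\inv{e})=1-\exp(-e)$, whose distance to $\R\setminus O$ is $\exp(-e)$; hence any $\gamma$ with $G(V_e)\subseteq O_{\gamma(e)}$ must satisfy $\gamma(e)\geq\exp(e)$, which is not \low. So $G$ is not \low--compact, even though it is \low, as the proposition asserts.

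The same example shows why your bound on the search cannot exist: the anti-modulus obtained from $d'$ separates the $F$--images only of points of $G(V_e)$ that are at least $\inv{e}$ apart, so among the doubling points $c_0+\inv{2^kn}$ only the first roughly $\log_2 e$ are pairwise far enough apart for it to say anything; past that point the argument gives no information, and in the example the index you are searching for really does grow like $\exp(e)$. (There is also no finite certificate that a monotone sequence has come within $\inv{e}$ of its unknown limit, which is what your ``stabilization'' test would need.) The paper's proof sidesteps compactness entirely by constructing the approximation $g(b,e)$ directly: it searches the finite grid $\inv{2d(e')}\Z\cap O_{e'}$ for a point $a$ with $f(a,e')$ close to $b$, and when the search fails --- precisely the case in which $G(y)$ is too close to an endpoint for the grid to detect --- it outputs $c_0$ or $c_1$ itself, which is within $\inv{e}$ of $G(y)$ by monotonicity. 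That fallback is where the hypothesis that $c_0,c_1$ are \F--reals does its real work; in your argument it is used only for the \F--approximability and boundedness of $O$.
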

\begin{proof} Without loss of generality let us assume that $F$ is increasing.
 For simplicity we also assume $c_0,c_1\in\Q$.
   Since $F$ is in \F, we find \F--functions $d(e)$ and $f(a,e)$ such that
  for all $a\in\Q$ and $x\in O_e$ we have
  \[
  |x-a|<\inv{d(e)} \to |F(x)-f(a,e)| < \inv{e}
  \]
  We may assume $d(e)\geq 4e$ and $d'(e)\geq e$ for all $e\in\N$.
  We will find an \F--function $g(b,e)$ such that
  for  $y\in V_e$ we have
  \[
  |y-b|<\inv{d'(2d(e))}\to |G(y)-g(b,e)|<\inv{e}
  \]

  Let $e'=2d'(2d(e))$.  If $\inv{2d(e')}\Z\cap O_{e'}=\emptyset$, put
  $g(b,e)=\inv{2}(c_0+c_1)$.  If there is some $a\in\inv{2d(e')}\Z\cap
  O_{e'}$ with $|b-f(a,e')|<\inv{e'}$, put $g(b,e)=a$ with $a$ minimal
  such.  Otherwise, put $g(b,e)=c_1$ if $b-f(a,e')\geq\inv{e'}$ for
  all $a\in\inv{2d(e')}\Z\cap O_{e'}$ and $c_0$ if
  $f(a,e')-b\geq\inv{e'}$ for all $a\in\inv{2d(e')}\Z\cap O_{e'}$.
  Note that one of these cases occurs since for
  $a,a'\in\inv{2d(e')}\Z\cap O_{e'}$ with $|a-a'|=\inv{2d(e')}$ we
  have $|f(a,e')-f(a',e')|<\frac{2}{e'}$.\\

  \noindent
  Now let $y\in V_e$ and $|y-b|<\inv{d'(2d(e))}$.  So $b,y\in
  V_{2d(e)}$ and $ |G(y)-G(b)|<\inv{2d(e)}$.

  \noindent {\bf Case I:} Suppose that $x=G(y)\in O_e$.
   Note that $O_e\neq\emptyset$ implies that
   $\inv{2d(e')}\Z\cap O_{e'}\neq\emptyset$.
   Since
  $|G(y)-G(b)|<\inv{2d(e)}$, we have $G(b)\in O_{e'}$. Hence there is
  some smallest $a\in\inv{2d(e')}\Z\cap O_{e'}$ with
  $|b-f(a,e')|<\inv{e'}$.
  Then $g(b,e)=a$ and
  \[|G(y)-g(b,e)|\leq |G(y)-G(b)|+|G(b)-a|.\]

  Now
  \[|F(a)-b|\leq  |F(a)-f(a,e')|+|f(a,e')-b|\leq 2\inv{e'}\leq
  \inv{d'(2d(e))}.\] Since $y\in V_e$ and $|y-F(a)|\leq |y-b|+|F(a)-b|<
\frac{2}{d'(2d(e))}$
  we have $F(a),b\in V_{2d(e)}$ and hence
  $|G(b)-a|\leq \inv{2d(e)}$. Combining all this we see
  that
  \[|G(y)-g(b,e)|<\inv{d(e)}<\inv{e}.\]

  \noindent {\bf Case II:} Suppose that $x=G(y)\notin O_e$. Then if
  $g(b,e)=a\in O_{e'}$
  the same argument as above works. Otherwise by construction of $g$,
  $g(b,e)$ equals either
  $c_0$ or $c_1$ and  we again have
  $|G(y)-g(b,e)|<\inv{e}$.
\end{proof}

\section{Series of functions}
\begin{definition}\label{d:f-sequence}
  For an open set $O\subseteq R^N$ a sequence $F_1,F_2,\ldots$ of functions $O\to\R^M$ is in \F if there
  are \F--functions $d:\N^2\to\N$ and $f:\Q^N\times\N^2\to\Q^M$ such
  that for all $i,e\in\Np$ and all $a\in\Q^N$ and $x\in O_e$
  \begin{equation}\label{e:f-sequence}
    |x-a|<\inv{d(i,e)} \to |F_i(x)-f(a,i,e)| < \inv{e}.
  \end{equation}
\end{definition}

This definition is the $n=1$ case of the obvious notion of an
\F--function $F:\N^n\times O\to\R^M$. Note also that for $N=0$ this
defines \F--sequences of elements of $\R^M$.

\begin{lemma}\label{l:f-function-sequence}
  If $F:O\times\Rp\to\R^M$ is in \F, then so is the sequence
  $F(-,i):O\to\R^M$, $(i=1,2,\ldots)$.
\end{lemma}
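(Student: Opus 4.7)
The plan is to unwind Definition \ref{d:f-function} for $F$ on the open set $O\times\R_{>0}\subseteq\R^{N+1}$ and reparametrize the resulting witnesses so that they fit Definition \ref{d:f-sequence}. Let $\tilde d:\N\to\N$ and $\tilde f:\Q^{N+1}\times\N\to\Q^M$ be the \F--functions witnessing that $F$ is in \F, so that for every $e'\in\Np$, every $(a,r)\in\Q^{N+1}$ and every $(x,t)\in(O\times\R_{>0})_{e'}$ the inequality $|(x,t)-(a,r)|<\inv{\tilde d(e')}$ implies $|F(x,t)-\tilde f(a,r,e')|<\inv{e'}$. The goal is to produce $d:\N^2\to\N$ and $f:\Q^N\times\N^2\to\Q^M$ satisfying \eqref{e:f-sequence} for $F_i(x)=F(x,i)$.

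The key geometric observation I would establish first is that for $x\in O_e$ and $i\in\Np$ one has $(x,i)\in(O\times\R_{>0})_{e'}$ with $e'=\max(e,i)$. Using the maximum norm, $|(x,i)|=\max(|x|,i)\leq e'$; and since $\R^{N+1}\setminus(O\times\R_{>0})=(\R^N\setminus O)\times\R\cup\R^N\times\R_{\leq0}$, the distance of $(x,i)$ to this complement equals $\min(\di(x,\R^N\setminus O),i)$, which is at least $\inv{e'}$ because $\di(x,\R^N\setminus O)\geq\inv{e}\geq\inv{e'}$ and $i\geq 1\geq\inv{e'}$. Note that the $\max(e,i)$ is forced both by the norm bound (the second coordinate $t=i$ must satisfy $t\leq e'$) and by the distance bound (we need $t\geq\inv{e'}$).

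Given this, the natural choice is $d(i,e)=\tilde d(\max(e,i))$ and $f(a,i,e)=\tilde f(a,i,\max(e,i))$, where $i\in\N$ is viewed as a rational via the fixed \low retract. These are in \F by closure under composition. To verify \eqref{e:f-sequence}, I would take $x\in O_e$ and $a\in\Q^N$ with $|x-a|<\inv{d(i,e)}$ and apply the witnesses for $F$ to the points $(x,i)\in(O\times\R_{>0})_{\max(e,i)}$ and $(a,i)\in\Q^{N+1}$: since $|(x,i)-(a,i)|=|x-a|<\inv{\tilde d(\max(e,i))}$, we obtain $|F_i(x)-f(a,i,e)|<\inv{\max(e,i)}\leq\inv{e}$. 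The only step requiring any real thought is the geometric observation above; everything else is routine bookkeeping.
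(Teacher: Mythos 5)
Your proof is correct and follows essentially the same route as the paper's: both reduce to the observation that $(x,i)\in(O\times\Rp)_{\max(e,i)}$ for $x\in O_e$, and then set $d(i,e)=\tilde d(\max(i,e))$ with the approximating point $(a,i)$ having the same second coordinate. You merely spell out the distance computation that the paper leaves implicit.
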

\begin{proof}
  There are \F--functions $d':\N\to\N$ and $f:\Q^{N+1}\times\N\to\Q^M$
  such that for all $a\in\Q^N$, $x\in O_e$ and $b\in[\inv{e},e]\cap\Q$
  \[
  |x-a|<\inv{d'(e)} \to |F(x,b)-f(a,b,e)| < \inv{e}.
  \]
  So we can set $d(i,e)=d'(\max(i,e))$.
\end{proof}
\begin{definition}\label{d:f-convergence}
  A sequence $F_1,F_2,\ldots$ of functions $O\to\R^M$\,\, \F--converges
  against $F$, if there is an \F--function $m:\N\to\N$ such that
  $|F(x)-F_i(x)|<\inv{e}$ for all $x\in O_e$ and $i\geq m(e)$.
\end{definition}
\begin{lemma}
  The \F--limit of an \F--sequence of functions is an \F--function.
\end{lemma}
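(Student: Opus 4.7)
The plan is to witness $F$ as an $\F$--function by combining the uniform convergence rate $m$ with the approximation scheme for the sequence, asking for twice the precision from each ingredient.

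More precisely, I would define the candidate witnesses
\[D(e)=d\bigl(m(2e),2e\bigr),\qquad \phi(a,e)=f\bigl(a,m(2e),2e\bigr),\]
where $d$ and $f$ are the $\F$--functions from Definition~\ref{d:f-sequence} for the sequence $(F_i)$, and $m$ is the $\F$--function from Definition~\ref{d:f-convergence} witnessing $\F$--convergence. Both $D$ and $\phi$ are compositions of $\F$--functions, hence $\F$--functions themselves (using that $\F$ is closed under composition and that $\N$ and $\Q$ are fixed \low retracts).

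To verify \eqref{e:f-function} for $F$, fix $e\in\Np$, $a\in\Q^N$, and $x\in O_e$ with $|x-a|<\inv{D(e)}$. Since $O_e\subseteq O_{2e}$, we have $x\in O_{2e}$. By the choice of $d$ and $f$ applied with parameters $i=m(2e)$ and $e$ replaced by $2e$,
\[\bigl|F_{m(2e)}(x)-f(a,m(2e),2e)\bigr|<\inv{2e}.\]
By the $\F$--convergence property applied at $2e$ with $i=m(2e)$,
\[\bigl|F(x)-F_{m(2e)}(x)\bigr|<\inv{2e}.\]
Adding these two estimates gives $|F(x)-\phi(a,e)|<\inv{e}$, as required.

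I do not expect any real obstacle: every ingredient is already in $\F$, and the argument is simply a triangle inequality with the two sources of error each absorbed into $\inv{2e}$. The only very mild point to keep in mind is that $m(2e)$ and $2e$ must be fed as inputs in $\N$ (which they are) and that the resulting composite lands in the right retract, which is immediate from the composition lemma and the fact that $\N$, $\Q$ and $\Q^M$ are $\F$--retracts in a canonical way.
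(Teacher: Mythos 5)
Your proposal is correct and is essentially identical to the paper's own proof: the same witnesses $d'(e)=d(m(2e),2e)$, $f'(a,e)=f(a,m(2e),2e)$ and the same triangle-inequality argument. The remark that $O_e\subseteq O_{2e}$, which justifies applying both defining properties at precision $2e$, is left implicit in the paper but is a welcome explicit step.
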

\begin{proof}
  Let $d$ and $f$ be as in Definition \ref{d:f-sequence} and $m$ as in
  \ref{d:f-convergence}. Set $d'(e)=d(m(2e),2e)$ and
  $f'(a,e)=f(a,m(2e),2e)$. Consider $a\in\Q^N$, $x\in O_e$ and assume
  $|x-a|<\inv{d'(e)}$. Then $|F(x)-F_{m(2e)}(x)|<\inv{2e}$ and
  $|F_{m(2e)}(x)-f'(a,e)|<\inv{2e}$. It follows that
  $|F(x)-f'(a,e)|<\inv{e}$.
\end{proof}
\begin{proposition}\label{p:f-series}
  Let $F_1,F_2,\ldots$ be an \F--sequence of functions $O\to\R^M$
  such that the partial sums of the series $\sum_{i=1}^\infty F_i$ are
  \F--convergent. Then $\sum_{i=1}^\infty F_i:O\to\R^M$ is in \F.
\end{proposition}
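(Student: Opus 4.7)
The plan is to reduce to the preceding lemma on \F--limits of \F--sequences. Writing $S_n = \sum_{i=1}^n F_i$ for the partial sums, the hypothesis says precisely that $(S_n)_{n \geq 1}$ \F--converges to $S = \sum_{i=1}^\infty F_i$, so by that lemma it suffices to show that $(S_n)_{n \geq 1}$ is itself an \F--sequence of functions $O \to \R^M$. Working component-wise we may assume $M = 1$.

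Let $d:\N^2\to\N$ and $f:\Q^N\times\N^2\to\Q$ witness that $(F_i)$ is an \F--sequence in the sense of Definition~\ref{d:f-sequence}. The natural rational approximation to $S_n(x)$ at precision $\inv{e}$ near $a \in \Q^N$ is $\sum_{i=1}^n f(a, i, 2ne)$: if $|x - a|$ is small enough to trigger \eqref{e:f-sequence} simultaneously for every $i \leq n$ at the tighter precision $2ne$, then each of the $n$ term-by-term errors is at most $\inv{2ne}$ and their sum is at most $\inv{2e}$. Accordingly I would set
\[
d'(n, e) = \max_{i \leq n} d(i, 2ne),
\]
which lies in \F because good classes are closed under bounded maxima, as recorded in Section~2.

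It remains to realise $\sum_{i=1}^n f(a, i, 2ne)$, up to a further error of $\inv{2e}$, by a single \F--function $f'(a, n, e) \in \Q$. This is exactly what Lemma~\ref{l:q-add} supplies: applied with the \F--retract $X = \Q^N \times \N^2$ carrying the parameters $(a, n, e)$ and summand $g((a, n, e), i) = f(a, i, 2ne)$, it yields an \F--function $\phi$ with $|\phi((a, n, e), y, k) - \sum_{i=0}^y g((a, n, e), i)| < \inv{k}$; setting $f'(a, n, e) = \phi((a, n, e), n, 2e)$ then does the job. Combining the two estimates by the triangle inequality gives $|S_n(x) - f'(a, n, e)| < \inv{e}$ whenever $x \in O_e$ and $|x - a| < \inv{d'(n, e)}$, so $d'$ and $f'$ exhibit $(S_n)$ as an \F--sequence and the preceding lemma concludes the proof. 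I do not foresee any real obstacle; the only mild subtlety is summing rationals via Lemma~\ref{l:q-add} instead of directly, which is exactly the workaround already employed in the integration theorem.
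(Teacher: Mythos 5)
Your proof is correct and takes essentially the same route as the paper's (very terse) proof: reduce to the preceding lemma by exhibiting the partial sums as an \F--sequence of functions, using the bounded maximum $\max_{i\le n}d(i,2ne)$ for the new modulus and Lemma~\ref{l:q-add} to realise the finite rational sums. The only cosmetic point is the index shift needed because Lemma~\ref{l:q-add} sums from $i=0$ while the series starts at $i=1$ (set $g(\cdot,0)=0$).
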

\begin{proof}
  We have to show that the series of partial sums is an \F--series of
  functions. This can easily be done using Lemma \ref{l:q-add}.
\end{proof}
\section{Examples}
\newcounter{bspl}\setcounter{bspl}{0}
\nc{\beispiel}[1]{\refstepcounter{bspl}
\noindent{\bf\arabic{bspl}. #1}\\}
\newcounter{fact}\setcounter{fact}{0}
\nc{\factum}{\refstepcounter{fact}
\noindent{\bf Fact \arabic{bspl}.\arabic{fact}: }}

\nc{\re}{\mathrm{Re}}
\nc{\im}{\mathrm{Im}}
\nc{\Cs}{\C\setminus\R_{\scriptscriptstyle\leq0}}

\beispiel{Inverse trigonometric functions}\\ The function
$\frac{x}{1+x^2t^2}$ is continuous and \sa. So by Theorem \ref{s:int}
\[\arctan(x)=\int_0^1\frac{x}{1+x^2t^2}\dd t\] is \low.
The same argument shows that
\[\arcsin(x)=\int_0^1\frac{x}{\sqrt{1-x^2t^2}}\dd t\] is \low\footnote{
We do not know whether $\arcsin$ is uniformly \low.}, as a function
defined on $(-1,1)$.\\

\beispiel{Logarithm}\\ As a \sa function $\inv{x}:\Rp\to\R$ is
\low. So by Theorem \ref{s:int} $\ln(x)=\int_1^x\inv{y}\dd y$ is \low,
at least on $(0,1)$ and $(1,\infty)$. But writing
\[\ln(x)=\int_0^1\frac{x-1}{1+t(x-1)}\dd t,\]
we see that $\ln(x)$ is \low on $(0,\infty)$.

The same formula defines also the main branch of the complex logarithm
$\ln(z):\Cs\to\C$. Since the real and imaginary part of the integrand
$\frac{z-1}{1+t(z-1)}$ are \sa functions of $\re(z)$, $\im(z)$ and
$t$, we conclude that $\ln(z)$ is \low.\footnote{We identify $\C$ with
  $\R^2$.} It is also easy to see that $\ln(z)$ is \low compact as a
function from $\Cs$ to $\{z\mid \im(z)\in(-\pi,\pi)\,\}$.\\

\beispiel{Exponentiation} \\ As $\exp(x)$ is bounded on every interval
$(-\infty, r)$ we may apply Proposition \ref{p:ifti} to
$\ln:(0,1)\to(-\infty,0)$ to conclude that $\exp(x)$ is \low on
$(-\infty,0)$ and -- by translation -- on every interval $(-\infty,
r)$. $\exp(x)$ cannot be \low on the whole real line since it grows
too fast. Nevertheless the following is true.

\begin{lemma}\label{l:bounded_exp}
$G(x,y)=\exp(x)$ is \low on $V=\{(x,y)\colon\exp(x)<y\}$.
\end{lemma}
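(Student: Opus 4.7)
The plan is to exploit the identity
\[\exp(x)=y\cdot\exp(x-\ln y),\]
which holds for all $y>0$. On $V$ we have $y>\exp(x)>0$, so $\ln y$ is defined and $x-\ln y<0$. Thus the right-hand side expresses $G$ as a composition in which $\ln$ is applied only at positive arguments (where it is \low by the previous example) and $\exp$ is applied only at negative arguments (where it is \low as just observed). Multiplication $\R^2\to\R$ is \low as a continuous \sa function by Theorem~\ref{s:sa}, so I would assemble $G$ by repeated application of Corollary~\ref{f:comp-c}.

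The real content is verifying the \F--compactness hypothesis of Corollary~\ref{f:comp-c} for the two non-trivial inner maps. For the projection $(x,y)\mapsto y$ viewed as a map $V\to(0,\infty)$, the distance-to-complement condition on $V_e$ applied in the vertical direction gives $y-\exp(x)\geq 1/e$, whence $y\in[1/e,e]=(0,\infty)_e$, and the bound $\beta(e)=e$ does the job.

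For $(x,y)\mapsto x-\ln y$ viewed as a map $V\to(-\infty,0)$, I must show that its restriction to $V_e$ lands in $(-\infty,0)_{\beta(e)}$ for some \F--function $\beta$. The lower bound $x-\ln y\geq-(e+\ln e)$ is trivial. The crucial upper bound comes from combining $\exp(x)\leq y-1/e$ with $y\leq e$:
\[x-\ln y\;\leq\;\ln\Bigl(1-\tfrac{1}{ey}\Bigr)\;\leq\;-\tfrac{1}{ey}\;\leq\;-\tfrac{1}{e^2},\]
so $\beta(e)=e^2+e$ suffices.

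Once these compactness checks are in place, $G$ is obtained by following the composition chain
\[(x,y)\;\mapsto\;\bigl(y,\,x-\ln y\bigr)\;\mapsto\;\bigl(y,\,\exp(x-\ln y)\bigr)\;\mapsto\;y\cdot\exp(x-\ln y)=\exp(x),\]
invoking Corollary~\ref{f:comp-c} at each stage. The main obstacle, modest as it is, is the upper bound on $x-\ln y$: the max-norm distance condition on $V_e$ must be converted into a polynomial gap between $\exp(x)$ and $y$, and then propagated through $\ln$ to separate $x-\ln y$ polynomially from $0$.
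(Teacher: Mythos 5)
Your proof is correct, but it takes a genuinely different route from the paper. The paper splits $V$ into $V\cap(\R_{<1}\times\R)$ (where $\exp$ is already known to be \low) and the convex piece $V'=V\cap(\R_{>0}\times\R)$, realizes $G$ on $V'$ as a component of the inverse of the uniformly \low map $(z,y)\mapsto(\ln z,y)$ defined on the \sa region $\{(z,y)\colon 1<z<y\}$, bounds the differential of that inverse by $(x,y)\mapsto y$, invokes the inverse function theorem in the form of Corollary~\ref{f:ift}, and finally glues the two pieces with Remark~\ref{b:union}. You instead use the identity $\exp(x)=y\cdot\exp(x-\ln y)$ to turn the problem into a chain of \emph{forward} compositions, so that all the analytic content collapses into two \F--compactness estimates, the essential one being $x-\ln y\leq -\inv{e^2}$ on $V_e$; your derivation of that bound from $\exp(x)\leq y-\inv{e}$ and $y\leq e$ is correct, as is the easy check that $y\in[\inv{e},e]$. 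What your route buys is that it bypasses Theorem~\ref{s:ift}/Corollary~\ref{f:ift} entirely and, more notably, avoids Remark~\ref{b:union}, whose hypotheses (\F--approximability of the two pieces and the condition $(U\cup V)_e\subset U_{u(e)}\cup V_{u(e)}$) the paper's proof does not actually verify for the non--\sa sets $V\cap(\R_{<1}\times\R)$ and $V'$; it treats all of $V$ uniformly. The cost is that you must already know $\exp$ is \low on $(-\infty,0)$ and $\ln$ is \low on $(0,\infty)$, but both are established in the paper immediately before this lemma, so nothing circular occurs. For the final multiplication step, note that you can appeal to Lemma~\ref{l:comp-u} directly (multiplication is uniformly \low on $\R^2$ by Theorem~\ref{s:sa} and Remark~\ref{b:extends-uniform}), so no further compactness check is needed there.
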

\begin{proof}
Clearly, $G$ is \low on $V\cap(\R_{<1}\times \R)$ and differentiable
on $V$.  So let us consider $V'=V\cap (\R_{>0}\times \R)$, which is a
convex open subset of $\R^2$.  Let $O=\{(z,y)\colon 1<z<y\}$, which is
\low \ap, let $F: O\longrightarrow V'$ map $(z,y)$ to $(\ln
z,y)$ and let $H$ be the inverse of $F$. The norm of the differential
of $H$ is bounded by the \low function $(x,y)\mapsto y$.  Now $F$ is
uniformly low, which by Corollary~\ref{f:ift} implies that $H$ and
therefore $G\upharpoonright V'$ is \low. Now $G$ is \low on $V$ by
Remark \ref{b:union}.
\end{proof}

The complex logarithm defines a homeomorphism \[\ln:\Cs\to\{z\mid
\im(z)\in(-\pi,\pi)\,\}\] and for all $r<s\in\R$ a uniformly \low
homeomorphism between $\{z\in\Cs\mid\exp(r)<|z|<\exp(s)\}$ and
$\{z\mid \im(z)\in(-\pi,\pi), r<\re(z)<s\}$. We can apply Corollary
\ref{f:ift} to see that $\exp(z)$ is \low on $\{z\mid
\im(z)\in(-\pi,\pi), r<\re(z)<s\}$. Using the periodicity of $\exp(z)$
it is now easy to see that $\exp(z)$ is \low on each strip $\{z\mid
r<\re(z)<s\}$. This implies that $\sin(x):\R\to\R$ is \low. Now also
$\cos(x)$ is \low and since $\exp(x+y\ii)=\exp(x)(\cos(y)+\sin(y)\ii)$
we see that $\exp(z)$ is \low on every half-space
$\{z\mid\re(z)<s\}$.\\

\begin{bemerkung}
  It is easy to see that $\exp(z)$ is elementary on $\C$.
\end{bemerkung}\vspace{2em}

\beispiel{$x^y$}\label{bspl:xy} \\
Consider the function $x^y$, defined on
$X=(\Cs)\times\C$ by
\[x^y=\exp(\ln(x)\cdot y).\]
Since $\ln(x)$ is \low and $\exp(x)$ elementary on $\C$, it is clear
that $x^y$ is elementary.\\

Let us determine some subsets of $X$ on which $x^y$ is \low. We use
the notation $E=\{z\in\C\mid|z|<1\}$ for the open unit disc.\\

\factum $x^y$ is \low on $(\Es)\times\Rp$.
\label{fact:unit-positiv-reell}\\

\noindent Proof: $\ln(x)\cdot y$ maps this set to $\{z\mid\re(z)<0\}$,
on which $\exp$ is uniformly \low.\\

\factum $x^y$ is \low on
$(0,1)\times\{z\mid\re(z)> 0\}$.\label{fact:eins-positiv}\\

\noindent Proof: $\ln(x)\cdot y$ maps this set to
$\{z\mid\re(z)<0\}$.\\

\factum $x^y$ is \low on $\Rp\times\{z\mid 0<\re(z)<r)$, for all
positive $r$.\label{fact:positiv-beschraenkt}\\

\noindent Proof: Since
$x^{a+b\ii}=x^a(\sin(\ln(x)b)+\cos(\ln(x)b)\ii)$, it suffices to
consider $x^y$ on $\Rp\times(0,r)$. $x^y$ is \low on
$(0,2)\times(0,r)$, since $\ln(x)y$ maps $(0,2)\times(0,r)$ to
$(-\infty,\ln(2)r)$. Therefore we are left with
$U=(1,\infty)\times(0,r)$. Let $N$ be a natural number $\geq
r+1$. Then $x^y$ is the composition of $F(x,y)=(\ln(x)y,x^N)$ and
$G(z,w)=\exp(z)$. $F$ maps $U$ into $V=\{(z,w)\mid \exp(z)<w\}$, on
which $G$ is \low by Lemma \ref{l:bounded_exp}. We will show that
$F:U\to V$ is \low compact, so we can apply Corollary~\ref{f:comp-c}
to conclude that $x^y$ is \low on $U$. It is clear that $F$ is \low
compact as a function from $U$ to $\R^2$. So it suffices to show that
for all $e\in\Np$ and $(x,y)\in[\inv{e},\infty)\times(0,r)$ we have
  $\di(F(x,y),\R^2\setminus V)\geq\inv{4e}$. This amounts to
  $\exp(\ln(x)y+\inv{4e})<x^N-\inv{4e}$, which is easy to prove:
  {\renewcommand{\inv}[1]{\tfrac{1}{#1}}
  \begin{align*}
    \exp\bigl(\ln(x)y+\inv{4e}\bigr)&=x^y\exp\bigl(\inv{4e}\bigr)\leq
    x^y\bigl(1+\inv{2e}\bigr)<
    x^y\bigl(1+\inv{e}\bigr)-x^y\inv{4e}\\
    &\leq x^N-x^y\inv{4e}\leq x^N-\inv{4e}.
  \end{align*}
}

\beispiel{Gamma function}

We have for $\re(x)>1$
\begin{align*}
\Gamma(x)&=\int_0^\infty t^{-1+x}\exp(-t)\dd t\\
&=
\int_0^1 t^{-1+x}\exp(-t)\dd t+\int_1^\infty t^{-1+x}\exp(-t)\dd t\\
&=
\int_0^1 t^{-1+x}\exp(-t)\dd
t+\int_0^1\inv{t^{1+x}}\exp\bigl(\frac{-1}{t}\bigr)\dd t.
\end{align*}
Let us check that for every bound $r>1$ the two integrands are \low on
$X_r=\{(x,t)\mid\re(x)\in(1,r), t\in(0,1)\}$: $\exp(-t)$ is \low on
$(0,1)$. And, since $\inv{t}:(0,1)\to(1,\infty)$ is \low compact
(cf.\ Remark \ref{b:sa_compact}), the function $\exp(\frac{-1}{t})$ is
\low on $(0,1)$.  $t^{-1+x}$ and
$\inv{t^{1+x}}=\big(\inv{t}\bigr)^{x+1}$ are \low by Fact
\ref{bspl:xy}.\ref{fact:positiv-beschraenkt} above.

If $r>1$, the absolute values of the integrands are bounded by $1$ and
$(r+1)^{(r+1)}$, respectively. So by Theorem \ref{s:int}, $\Gamma$ is
\low on every strip $\{z\mid1<\re(z)<r\}$. \\

\beispiel{Zeta--function}\\ Since $x^y$ is \low on
$(0,1)\times\{z\mid\re(z)>0\}$ (Fact
\ref{bspl:xy}.\ref{fact:eins-positiv}), the function $(\inv{x})^y$ is
\low on $(1,\infty)\times\{z\mid\re(z)>0\}$. This implies that the
sequence $\inv{n^s}$, $(n=1,2,\ldots)$ is a \low sequence of functions
defined on $\{z\mid\re(z)>0\}$ by Lemma
\ref{l:f-function-sequence}\footnote{Strictly speaking, one applies
  \ref{l:f-function-sequence} to $(\inv{x+1})^y$ to get the \low
  series $\inv{2^s},\inv{3^s},\ldots$.}. The series
\[\zeta(z)=\sum_{n=1}^{\infty}\inv{n^z}\]
converges whenever $t=\re(z)>1$ and we have the estimate
\[\Bigl|\zeta(z)-\sum_{n=1}^{N}\inv{n^z}\Bigr|\;\leq\;
\int_N^\infty\inv{x^t}\dd x= \inv{(t-1)N^{t-1}}.\] So, if $\re(z)\geq
1+\inv{k}$ and $N\geq(ke)^k$, we have
$\Bigl|\zeta(z)-\sum_{n=1}^{N}\inv{n^z}\Bigr|<\inv{e}$.  This shows
that $\zeta(z)$ is \low on every $\{z\mid\re(z)>s\}$, $(s>1)$ by
Proposition \ref{p:f-series}.

\section{Holomorphic functions}
\begin{lemma}
  The sequence of functions $z^n$, $n=0,1,\ldots$ is a \low sequence
  of functions on $\E$
\end{lemma}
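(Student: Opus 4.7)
I will identify $\C$ with $\R^2$ and view $\E$ as the open unit disc, so that each $F_n(z) = z^n$ is a map $\E \to \R^2$. I need to exhibit low functions $d:\N^2\to\N$ and $f:\Q^2\times\N^2\to\Q^2$ satisfying the inequality in Definition~\ref{d:f-sequence}.

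The plan rests on two elementary observations. First, on $\E_e$ one has $|z| \leq 1 - \inv{e}$, so that $|z^n| \leq (1 - \inv{e})^n \leq \exp(-n/e)$; in particular, once $n \geq e^2$ and $e \geq 2$ this is already smaller than $\inv{2e}$, and the choice $f(a,n,e) = 0$ suffices in that regime. Second, for $n < e^2$ I invoke the derivative bound $|z^n - a^n| \leq n\max(|z|,|a|)^{n-1}|z - a|$; choosing $d(n,e) = 4ne$ (the extra factor absorbs the discrepancy between maximum norm on $\R^2$ and complex modulus) and clipping $a$ to $0$ whenever $|a| > 1$ keeps $|a|, |z| \leq 1$ and forces $|z^n - a^n| < \inv{2e}$ whenever $|z - a| < \inv{d(n,e)}$.

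It remains, for $n < e^2$ and $|a| \leq 1$, to supply a low function $f(a,n,e)$ approximating $a^n$ to within $\inv{2e}$. The natural construction is \emph{truncated iterated multiplication}: set $M = 4ne$, start from $h_0 = 1$, and iterate $h_{k+1}$ to be the nearest rational of denominator $M$ to $a h_k$. Since $|a| \leq 1$, the per-step error is at most $\inv{M}$ and accumulates to at most $n/M = \inv{4e}$ after $n$ steps; the triangle inequality then gives $|z^n - h_n| < \inv{e}$. Crucially, every $h_k$ has numerator and denominator bounded by $M \leq 4e^3$, so the iteration stays within polynomially bounded data.

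The main obstacle is justifying that this truncated iteration really defines a low function of $(a,n,e)$, since the low class is not closed under primitive recursion in general. I would address this in the spirit of the bounded-multiplication remark recalled earlier for the factorial: using that the class of functions with low graph is closed under bounded multiplication, the relation ``the rational $b$ with denominator $4e$ approximates $a^n$ within $\inv{2e}$'' is a low relation on the polynomially sized candidate set $\{b \in \Q^2 : |b| \leq 2,\ 4e\cdot b \in \Z^2\}$, and a bounded $\mu$-search then provides the required low $f$.
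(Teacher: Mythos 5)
Your overall decomposition (tail estimate for $n\geq e^2$, Lipschitz bound plus an approximate evaluation of $a^n$ at rational points for $n<e^2$) is sensible, and the first two steps are fine. But the entire difficulty of the lemma is concentrated in the step you flag yourself as ``the main obstacle,'' and your resolution of it does not work. The closure of the class of functions with \low graph under bounded multiplication gives you that the relation $\{(z,n,v)\mid z^n=v\}$ is \low; it does not give you access to the value $z^n$, which for a rational $a=z/w$ with, say, $w=2$ and $n$ close to $e^2$ is exponentially large in the inputs $(a,n,e)$. To decide your candidate relation ``$|b-a^n|<\inv{2e}$'' one must compare $b\,w^n$ with $z^n$, i.e.\ quantify over numbers of size about $w^{e^2}$, and such a quantifier is not bounded by any \low function of the inputs. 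Likewise the graph of your truncated iteration $h_0,\dots,h_n$ is defined by a length-$n$ recursion; witnessing it requires coding a sequence of $n$ rationals, again an object of exponential size, and the \low class is not known to be closed under bounded recursion (the paper explicitly notes it is not known whether all of \Epsilon$^2$, which \emph{is} closed under limited recursion, is \low). So the assertion that the candidate relation is \low is exactly the content of the lemma restricted to rational arguments, and your argument for it is circular.

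This is also precisely why the paper takes a different route: it computes $z^n$ as $\exp(n\ln z)$, using that $\ln$ is \low (via Theorem \ref{s:int} applied to a \sa integrand) and that $\exp$ is uniformly \low on left half-planes (via the inverse function machinery of Section \ref{sec:ift}), then converts the two-variable function $x^y$ on $(\Es)\times\Rp$ into a sequence by Lemma \ref{l:f-function-sequence}, and finally patches in $-z$ by symmetry and $z=0$ by the same smallness observation you make. If you want a direct construction, you would need an honest proof that approximate powering of rationals is \low --- essentially a $\Delta_0$-style iterated-multiplication result --- which is a substantial theorem, not a one-line appeal to bounded $\mu$-search.
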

\begin{proof}
  By Fact \ref{bspl:xy}.\ref{fact:unit-positiv-reell} and Lemma
  \ref{l:f-function-sequence}, the sequence $z^1$, $z^2$,\ldots\ is
  \low on $\Es$. Since $(-z)^n=(-1)^nz^n$, it follows that $z^1$,
  $z^2$,\ldots\ is \low on $\E\setminus\{0\}$. It is now easy to see
  that $z^1$, $z^2$,\ldots\ is actually \low on $\E$. (Set
  $f(a,i,e)=0$, if $|a|<\inv{e}$.)
\end{proof}
\begin{lemma}\label{l:sequence-low}
  Let $F(z)=\sum_{i=0}^\infty a_iz^i$ be a complex power series with
  radius of convergence $\rho$. Let $0<b<\rho$ be an \F--real such
  that $(a_ib^i)_{i\in\N}$ is an \F--sequence of complex
  numbers. Then $F$ restricted to the open disc $\{z\colon |z|<b\}$ belongs
  to \F.
\end{lemma}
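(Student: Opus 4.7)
The plan is to exhibit $F$ as the sum of an $\F$--convergent $\F$--sequence of functions on the open disc $\{|z|<b\}$ and then apply Proposition~\ref{p:f-series}. Factor $F_i(z)=a_iz^i=(a_ib^i)\cdot(z/b)^i$. Multiplication by the $\F$--real $\inv{b}$ is uniformly in $\F$ and maps $\{|z|<b\}$ into $\E$, so composing with the $\F$--sequence $w\mapsto w^i$ on $\E$ supplied by the preceding lemma, and then multiplying pointwise by the given $\F$--sequence of complex numbers $(a_ib^i)$, exhibits $(F_i)_{i\in\N}$ as an $\F$--sequence of functions on $O=\{|z|<b\}$.

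For $\F$--convergence of the partial sums $S_M=\sum_{i=0}^M F_i$: since $b<\rho$, the series $\sum|a_ib^i|$ converges absolutely, so $K:=\sup_i|a_ib^i|$ is a finite real constant. For $z\in O_e$ the defining inequality $b-|z|\ge\inv{e}$ gives $|z|/b\le 1-\inv{eb}$, so
\[\bigl|F(z)-S_M(z)\bigr|\;\le\;\sum_{i>M}K\Bigl(1-\inv{eb}\Bigr)^{\!i}\;=\;K\cdot eb\cdot\Bigl(1-\inv{eb}\Bigr)^{\!M+1}.\]
Using $-\log(1-\inv{eb})\ge\inv{eb}$, this drops below $\inv{e}$ as soon as $M\ge eb\log(Ke^2b)$, and such an $M$ is easily realized by an $\F$--function $m(e)$ assembled from $e$, $\lceil b\rceil$, the constant $\lceil K\rceil$ and the low function $n\mapsto\lceil\log_2 n\rceil$. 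Proposition~\ref{p:f-series} then yields $F\in\F$ on $O$.

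The main obstacle is the apparent non--effectivity of this rate: the natural bound involves $K=\sup_i|a_ib^i|$, which in general cannot be extracted computably from the given $\F$--data. The resolution is the elementary observation that $K$ is nevertheless a single real constant, so $\lceil K\rceil\in\N$ is trivially $\F$--computable as a constant function (every good class contains all constants) and may appear freely inside the definition of $m(e)$. The $\F$--sequence claim itself is routine once one grants that composition with a uniformly $\F$ map and multiplication by an $\F$--sequence of complex numbers both preserve the $\F$--sequence property.
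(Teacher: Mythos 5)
Your proposal is correct and follows essentially the same route as the paper: factor $a_iz^i=(a_ib^i)(z/b)^i$, use the $\F$--sequence $(z^i)$ on $\E$ composed with $z\mapsto z/b$, and then verify $\F$--convergence of the partial sums with a rate $m(e)$ built from a non--effective but constant bound on $(a_ib^i)$. The only difference is cosmetic: the paper bounds the tail by $(be)^2/n$ using that $|a_nb^n|<1$ for $n$ beyond some fixed $N$ (taking $m(e)=\max(N,b^2e^3)$), whereas you use $K=\sup_i|a_ib^i|$ and a logarithmic threshold; both rest on the same observation that the hidden constant is admissible in an $\F$--function.
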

\begin{proof}
  By assumption and the last lemma the sequence $(a_ib^iz^i)$ is \F on
  $\E$. If we plug in the \F--function $z\mapsto\frac{z}{b}$, we see
  that $(a_iz^i)$ is \low on $\{z\colon |z|<b\}$. We are finished, if
  we can show that $\sum_{i=0}^\infty a_iz^i$ is \F--convergent on
  $\{z\colon |z|<b\}$.

  For this we find a \low function $m(e)$ such that
  $|\sum_{i=n}^\infty a_iz^i|<\inv{e}$ for all $n\geq m(e)$ and
  $|z|\leq b-\inv{e}$. Since
  $\inv{\rho}=\limsup_{n\to\infty}\sqrt[n]{|a_n|}$, there is an $N$
  such that $\sqrt[n]{|a_n|}<\inv{b}$ for all $n> N$, or, $|a_nb^n$|<1
  for all $n>N$.

  We show that $\sum_{i=n}^\infty |a_i|x^i<\inv{e}$
  for all $n>\max(N,b^2e^3)$ and $x\in[0,b-\inv{e}]$:
  \begin{align*}
    \sum_{i=n}^\infty |a_i|x^i&=
    \Bigl(\frac{x}{b}\Bigr)^n
    \sum_{i=0}^\infty\bigl|a_{n+i}b^{n+i}\bigr|\Bigl(\frac{x}{b}\Bigr)^i
    \leq\Bigl(\frac{x}{b}\Bigr)^n
    \sum_{i=0}^\infty\Bigl(\frac{x}{b}\Bigr)^i\\
    &\leq\Bigl(1-\inv{be}\Bigr)^n\inv{1-\frac{x}{b}}
    \leq\Bigl(\inv{1+\inv{be}}\Bigr)^nbe\leq\inv{1+\frac{n}{be}}be\\
    &\leq \frac{(be)^2}{n}<\inv{e}
  \end{align*}
\end{proof}
\begin{bemerkung}
  If $0<b_0<b_1$ are \F--reals and $(a_ib_1^i)$ is an
  \F--sequence, then also $(a_ib_0^i)$ is an \F--sequence.
\end{bemerkung}
\begin{proof}
  $\Bigl(\dfrac{b_0}{b_1}\Bigr)^i$ is an \F--sequence.
\end{proof}

\begin{definition}
  Let $A$ be a compact subset of $\R^N$. We call a function
  $F:A\to\R^M$ to be in \F if there are \F--functions $d:\N\to\N$ and
  $f:\Q^N\times\N\to\Q^M$ such that for all $e\in\Np$ and all
  $a\in\Q^N$ and $x\in A$
  \begin{equation*}
    |x-a|<\inv{d(e)} \to |F(x)-f(a,e)| < \inv{e}.
  \end{equation*}
\end{definition}
Let $F:A\to\R^M$ be defined on the compact set $A$. The following is
easy to see:
\begin{enumerate}
\item If $F$ can be extended to an \F--function defined on an open
  set, then $F$ is in \F.
\item If $F$ is in \F, then all restrictions to open subsets of $A$
  belong to \F.
\end{enumerate}

\begin{lemma}\label{l:low-coeff}
  Let $F(z)=\sum_{i=0}^\infty a_iz^i$ be a complex power series with
  radius of convergence $\rho$. Assume that for some \F--real
  $b<\rho$, $F(z)$ restricted to $\{z\colon |z|\leq b\}$ is in
  \F. Then the sequence $(a_ib^i)$ belongs to \F.
\end{lemma}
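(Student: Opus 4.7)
The plan is to recover the Taylor coefficients of $F$ by inverting the Fourier expansion on the circle of radius $b$. Cauchy's formula gives
\[
a_n b^n \;=\; \frac{1}{2\pi}\int_0^{2\pi} F(b e^{\ii\theta})\, e^{-\ii n\theta}\,\dd\theta,
\]
which is valid because $b<\rho$ implies uniform convergence of the power series on $\{|z|\le b\}$. The approach is to approximate this integral by the $M$-point trapezoidal sum (equivalently, the discrete Fourier transform)
\[
\tilde a_n^{(M)} \;=\; \frac{1}{M}\sum_{k=0}^{M-1} F\bigl(b e^{2\pi\ii k/M}\bigr)\, e^{-2\pi\ii k n/M}
\]
for an $M$ chosen as an \F-function of $(n,e)$, and then to realise the samples $F(be^{2\pi\ii k/M})$ and the weights using the witnesses for $F\in\F$ together with the fact that $\sin$ and $\cos$ are \low.

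For the quadrature error I bound the modulus of continuity of the integrand $g(\theta)=F(be^{\ii\theta})e^{-\ii n\theta}$. The hypothesis $F\in\F$ on the compact disc $\{|z|\le b\}$ supplies an \F-function $d_F$ with $|F(z)-F(z')|<\inv{e}$ whenever $|z-z'|<\inv{d_F(e)}$ (pass two such points through a common rational approximation and use the two given $f(a,\cdot)$ estimates), and Lemma~\ref{l:bounded} on the compact disc provides an \F-bound $C$ with $|F|\le C$. Splitting
\[
g(\theta)-g(\theta') \;=\; \bigl[F(be^{\ii\theta})-F(be^{\ii\theta'})\bigr]e^{-\ii n\theta} + F(be^{\ii\theta'})\bigl[e^{-\ii n\theta}-e^{-\ii n\theta'}\bigr]
\]
yields $|g(\theta)-g(\theta')|\le\omega_F(b|\theta-\theta'|)+Cn|\theta-\theta'|$, and the standard Riemann-sum bound then controls $|\tilde a_n^{(M)}-a_nb^n|$ by $\omega_F(2\pi b/M)+2\pi Cn/M$. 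Choosing $M$ to dominate a fixed \F-expression in $d_F$, $C$, $n$ and $e$ forces this to be $<\inv{2e}$.

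To turn $\tilde a_n^{(M)}$ into an \F-function of $(n,e)$, note that $be^{2\pi\ii k/M}$ and $e^{-2\pi\ii k n/M}$ are \F-approximable rational complex numbers (since $b\in\R_\F$ and $\sin,\cos$ are \low); feeding these approximations into $f(\cdot,e')$ gives rational approximations of each sample $F(be^{2\pi\ii k/M})$ to precision $\inv{e'}$. An \F-choice of $e'=e'(n,e)$ keeps the aggregate replacement error below $\inv{2e}$, and Lemma~\ref{l:q-add} packages the resulting weighted finite sum as a rational \F-valued function of $(n,e)$. Adding the two error budgets yields the desired $\inv{e}$ bound.

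The main obstacle is purely bookkeeping: the quadrature step, the sampling of $F$, and the rational approximations of $b,\sin,\cos$ all have to share the $\inv{e}$ budget while keeping every precision parameter \F-computable in $(n,e)$. No quantitative control on $\rho$ is used beyond the inequality $b<\rho$, which underpins the validity of the Cauchy formula on $|z|=b$.
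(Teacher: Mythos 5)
Your proof is correct, and it starts from the same identity as the paper, namely Cauchy's formula $a_nb^n=\frac{1}{2\pi}\int_0^{2\pi}F(be^{\ii\theta})e^{-\ii n\theta}\,\dd\theta$ on the circle $|z|=b$. Where you diverge is in how this integral is shown to be \F--computable uniformly in $n$: the paper treats the exponent as a real parameter $y$, observes that $(x,y)\mapsto F(b\exp(x\ii))\exp(-yx\ii)$ is a bounded \F--function of $(x,y)$, and then invokes the integration theorem (Theorem~\ref{s:int}) followed by Lemma~\ref{l:f-function-sequence}; you instead discretize the integral by hand as an $M$-point trapezoidal sum and control the quadrature error through the \F--modulus of continuity and the \F--bound of $F$ on the closed disc. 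In effect you re-prove the special case of Theorem~\ref{s:int} needed here, with the additional bookkeeping that the modulus of continuity of the integrand $F(be^{\ii\theta})e^{-\ii n\theta}$ degrades linearly in $n$ (your $Cn/M$ term), which is exactly why $M$ must be taken as an \F--function of both $n$ and $e$. The paper's route is shorter, reuses existing machinery, and obtains the uniformity in $n$ automatically from Lemma~\ref{l:f-function-sequence}; your route is self-contained at the level of this lemma, makes every precision parameter explicit, and avoids having to extend Theorem~\ref{s:int} (stated for open domains) to an integrand built from a function given only on a compact disc. Both arguments are sound.
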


\begin{proof}
  We have
  \begin{align*}
    a_n&=\inv{2\pi\ii}\int_{|z|=b}\frac{F(z)}{z^{n+1}}\dd z
    =\inv{2\pi\ii}\int_{|z|=1}\frac{F(bz)}{(bz)^{n+1}}\dd(bz)\\
    &=\inv{2\pi\ii}\inv{b^n}\int_{|z|=1}
    \frac{F(bz)}{z^{n+1}}\dd z
  \end{align*}
  The integral can be computed as
  \begin{align*}
  &\int_0^{2\pi}\frac{F(b\exp(x\ii))}{\exp(x\ii)^{n+1}}
    (\ii\exp(x\ii))\dd
    x\\ &=\ii\int_0^{2\pi}F(b\exp(x\ii))\exp(-nx\ii)\dd x
  \end{align*}
  An application of Theorem \ref{s:int} yields that $y\mapsto
  \ii\int_0^{2\pi}F(b\exp(x\ii))\exp(-yx\ii)\dd x$ is a \low function
  from $\R$ to $\C$. The lemma follows from this by an application of
  Lemma~\ref{l:f-function-sequence}.
\end{proof}
\begin{lemma}
  A sequence $(x_n)\in\C$ is in \F if there is an \F--function
  $G:\N^2\to\Q^2$ with $|x_n-G(n,e)|<1/e$ for all $n,e\in\Np$.
\end{lemma}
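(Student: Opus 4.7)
The plan is to observe that the statement is essentially the $N=0$, $M=2$ instance of Definition~\ref{d:f-sequence}, combined with the identification $\C=\R^2$. The paper explicitly notes (immediately after Definition~\ref{d:f-sequence}) that for $N=0$ an \F--sequence of functions $O\to\R^M$ is the same as an \F--sequence of elements of $\R^M$, so there is really nothing to do beyond unwinding the definition.

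Concretely, I would check that when $N=0$ Definition~\ref{d:f-sequence} requires only \F--functions $d:\N^2\to\N$ and $f:\Q^0\times\N^2\to\Q^2$ -- i.e.\ effectively $f:\N^2\to\Q^2$ -- such that $|x_i-f(i,e)|<\inv{e}$ for all $i,e\in\Np$; the premise $|x-a|<\inv{d(i,e)}$ is vacuous because $\R^0$ and $\Q^0$ each consist of a single ``point.'' Taking $f:=G$, which is in \F by assumption, and $d$ to be any \F--function, e.g.\ the constant $1$, yields exactly the situation required by the definition, since $|x_n-f(n,e)|=|x_n-G(n,e)|<\inv{e}$ is the stated hypothesis.

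There is no real obstacle: the lemma is a bookkeeping observation matching the hypothesis against the definition, included presumably so that later constructions may invoke it directly without re-checking the degenerate case. The only mild point is to notice that for $N=0$ the $d$-clause of Definition~\ref{d:f-sequence} disappears, which justifies why the hypothesis needs only the single error bound $\inv{e}$ in its second argument rather than a separately chosen modulus.
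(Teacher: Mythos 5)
Your proposal is correct and matches the paper, whose entire proof is ``This is clear from the definitions''; you have simply spelled out the $N=0$, $M=2$ specialization of Definition~\ref{d:f-sequence} (with the $|x-a|<\inv{d(i,e)}$ premise vacuous and $O_e=O$ a point), which is exactly the intended argument.
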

\begin{proof}
  This is clear from the definitions.
\end{proof}

\begin{lemma}[Speed-Up Lemma]
  Suppose $(a_n)\in\C$ is a bounded sequence and that $0<b<1$ is an
  \F--real. Then $(a_nb^n)$ is an \F--sequence if $(a_nb^{2n})$
  is.
\end{lemma}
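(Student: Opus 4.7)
Let $M$ bound $|a_n|$, and set $c_n=a_nb^{2n}$, so $(c_n)$ is an \F--sequence by hypothesis and $|c_n|\le Mb^{2n}$. Consider the auxiliary power series $\tilde F(z)=\sum_{n\ge 0}c_nz^n$. Since $\limsup|c_n|^{1/n}\le b^2$, its radius of convergence is at least $1/b^2$. On the closed disc $\{z\in\C\mid|z|\le 1/b\}$ the bound $|c_nz^n|\le Mb^n$ gives uniform convergence with $|\tilde F(z)|\le M/(1-b)$. The number $1/b$ is itself an \F--real since the \F--reals form a field by Corollary~\ref{f:real_closed}.

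The plan is to show that $\tilde F$, restricted to the compact disc $\{|z|\le 1/b\}$, is an \F--function in the sense of the definition preceding Lemma~\ref{l:low-coeff}. Once this is established, Lemma~\ref{l:low-coeff} applied with $1/b$ in place of $b$ yields that the coefficient sequence $(c_n(1/b)^n)=(a_nb^n)$ is an \F--sequence, which is the desired conclusion.

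To witness that $\tilde F$ is in \F on the closed disc, I would approximate by partial sums. For $e\in\Np$, choose $N=N(e)$ to be the smallest integer with $Mb^N/(1-b)<1/(3e)$; then $N(e)=O(\log e)$ and is \F--computable by comparing sufficiently accurate rational approximations of $b^n$ to the threshold. The truncation $\tilde F_N(z)=\sum_{n<N}c_nz^n$ then satisfies $|\tilde F(z)-\tilde F_N(z)|<1/(3e)$ uniformly on $\{|z|\le 1/b\}$. Using the \F--approximations $\gamma(n,\rho)$ of $(c_n)$, define $f(a,e)=\sum_{n<N(e)}\gamma(n,\rho(e))\cdot a^n$, with $\rho(e)$ polynomial in $e$ and chosen large enough that the total error from replacing each $c_n$ by its approximation is below $1/(3e)$. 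Termwise differentiation of the series on a slightly larger disc yields a uniform Lipschitz bound and hence an \F--function $d(e)$ with $|x-a|<1/d(e)\Rightarrow|\tilde F(x)-\tilde F(a)|<1/(3e)$; combining the three contributions gives $|\tilde F(x)-f(a,e)|<1/e$ whenever $|x-a|<1/d(e)$ and $|x|\le 1/b$.

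The delicate point is to verify that this construction actually stays inside the smallest good class, where bounded products are not directly available. The saving observation is that $N(e)$ is only logarithmic in $e$, so the quantities $a^n$ and $b^n$ occurring for $n<N(e)$ are rationals of bit-length polynomial in $e$. Consequently $f(a,e)$ is assembled from $O(\log e)$ multiplications and additions of polynomially-sized rationals, which fits comfortably within the low class; the remaining bookkeeping for choosing $\rho(e)$ and $d(e)$ as explicit \F--functions is then routine.
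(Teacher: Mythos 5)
Your reduction is genuinely different from the paper's proof, and its framing is sound: setting $c_n=a_nb^{2n}$, the series $\tilde F(z)=\sum c_nz^n$ has radius of convergence at least $b^{-2}$, and if $\tilde F$ is in \F on $\{|z|\le b^{-1}\}$ then Lemma~\ref{l:low-coeff} indeed returns $(c_nb^{-n})=(a_nb^n)$ as an \F--sequence. The gap is in the step you call routine: showing that the truncated sums define an \F--function. Your justification --- that $f(a,e)$ is ``assembled from $O(\log e)$ multiplications and additions of polynomially-sized rationals'' --- does not place the construction in the smallest good class, for two reasons. First, a good class is closed under composing a \emph{fixed} number of functions, not under iterating a computation an input-dependent number of times (even $O(\log e)$ times); evaluating $\sum_{n<N(e)}\gamma(n,\rho(e))a^n$ by accumulating powers or by Horner's scheme is such an iteration, and bounded summation (Lemma~\ref{l:q-add}) only helps once the summand $(a,n,e)\mapsto\gamma(n,\rho(e))a^n$ is already known to be in \F. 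Second, the relevant size measure for lowness is the \emph{value} of the coded numerator and denominator, not the bit-length: for $a$ of height $\mathrm{poly}(e)$ and $n=\Theta(\log e)$ the exact rational $a^n$ has height of order $e^{\Theta(\log e)}$, which exceeds every polynomial, whereas every \low function $\N\to\N$ is polynomially bounded. So $a^n$ cannot be produced exactly by a \low function, and producing it approximately is precisely the problem of computing $|a|^n$ for $|a|>1$ (equivalently $b^{-n}$) within \F when the answer happens to be bounded by $\mathrm{poly}(e)$.

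That problem is the entire content of the lemma, and the paper solves it by the case split $n\gtrless\ln(e)/\ln(b^{-1})$ together with Lemma~\ref{l:bounded_exp}: when $b^n<\inv{e}$ one simply outputs $0$, and otherwise $b^{-n}=\exp(n\ln(b^{-1}))$ is computed as a \low function of its argument \emph{and of the bound $e$}, after which $G(n,e)=f(n,B^2e^2)b^{-n}$ works by a two-line estimate. You could repair your argument by importing exactly this device to approximate $a^n$ on the region $|a|\le b^{-1}$, $|a|^n\le\mathrm{poly}(e)$, but then the detour through $\tilde F$ and Lemma~\ref{l:low-coeff} (whose proof itself invokes Theorem~\ref{s:int}) becomes overhead compared with the paper's direct computation. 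Note also that the seemingly easier route of deducing that $\tilde F$ is in \F on $\{|z|<b^{-1}\}$ from Lemma~\ref{l:sequence-low} is blocked for the same reason: it would require $(c_n\beta^n)$ to be an \F--sequence for some \F--real $\beta>1$, which is again the hard direction of rescaling a coefficient sequence upward.
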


\begin{proof}
  We may assume that $|a_n|<1$ for all $n$.  Let $f:\N^2\to\Q^2$ be in
  \F such that $|a_nb^{2n}-f(n,e)|<\inv{e}$.
  \[y_e=\frac{\ln(e)}{\ln(b^{-1})},\quad(e=1,2,\ldots)\]
  is an \F--sequence of reals. So there is an \F--function
  $h:\N\to\Q$ such that $|y_e-h(e)|<1$ for all $e\in\Np$. We fix also
  a natural number $B\geq b^{-1}$.\\

  We want to define an \F--function $G:\N^2\to\Q^2$  with
  \[\bigl|a_nb^n-G(n,e)\bigr|<\inv{e}\]
  for all $n,e\in\Np$. Let $n,e$ be given.  We distinguish two cases:\\

  \noindent Case 1: $h(e)<n-1$. We have then
  $y_e<n$, which implies \[|a_nb^n|<|b^n|<\inv{e}.\]
  So we set $G(n,e)=0$.\\

  \noindent Case 2: $h(e)\geq n-1$. Then $y_e\geq n-2$, which means
  \[b^{2-n}=\exp\bigl(\ln(b^{-1})(n-2)\bigr)\leq e.\]
  We can now apply Lemma \ref{l:bounded_exp} and compute $b^{2-n}$ as
  a \low function of $\ln(b^{-1})(n-2)$ and $e$.  It follows that we
  can compute $b^{-n}=b^{2-n}b^{-2}$ as an \F--function of $n$ and
  $e$. So also $G(n,e)=f(n,B^2e^2)b^{-n}$ is an \F--function of $n$
  and $e$, and we have
  \[\bigl|a_nb^n-G(n,e)\bigr|= \bigl|a_nb^{2n}-f(n,B^2e^2)\bigr|\cdot b^{-n}<
  \frac{b^{-n}}{B^2e^2}
  \leq\frac{b^{2-n}}{e^2}\leq\inv{e}.\]
\end{proof}
\begin{folgerung}\label{f:speedup}
  Let $F(z)=\sum_{i=0}^\infty a_iz^i$ be a complex power series with
  radius of convergence $\rho$. If $F$ is in \F on some closed
  subdisc of $\{z\colon |z|<\rho\}$, it is in \F on all closed
  subdiscs.
\end{folgerung}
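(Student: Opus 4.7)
The plan is to rephrase the condition ``$F\in\F$ on a closed subdisc'' in terms of the sequence $(a_nb^n)$, using Lemma~\ref{l:low-coeff} in one direction and Lemma~\ref{l:sequence-low} in the other, and then iterate the Speed-Up Lemma to move the admissible radius. Concretely, if $F$ is in \F on $\{|z|\le b_0\}$, then by restriction $F$ is in \F on $\{|z|\le b_0'\}$ for every rational $b_0'<b_0$, so Lemma~\ref{l:low-coeff} gives that $(a_n(b_0')^n)$ is an \F-sequence. Dually, to conclude that $F$ is in \F on a target $\{|z|\le b_1\}$, it suffices by Lemma~\ref{l:sequence-low} to exhibit an \F-sequence $(a_nb^n)$ for some rational $b$ with $b_1<b<\rho$ and then restrict to the closed subdisc. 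Hence the whole problem reduces to the following key claim: if $(a_nb_0^n)$ is an \F-sequence for some rational $b_0<\rho$, then $(a_nb^n)$ is an \F-sequence for every rational $b<\rho$.

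For $b\le b_0$ this is immediate from the Remark following Lemma~\ref{l:sequence-low}. For the main case $b_0<b<\rho$, the plan is to fix a rational $R$ with $b<R<\rho$ and rescale: put $x_n=a_nR^n$, which is bounded since $R<\rho$, and $c_0=b_0/R$, $c=b/R$, both rational and in $(0,1)$. The hypothesis translates to $(x_nc_0^n)$ being \F, and the goal becomes $(x_nc^n)$ is \F. Define $c^{(k)}=c_0^{1/2^k}$; by Corollary~\ref{f:real_closed} (real-closedness of $\R_\F$) each $c^{(k)}$ is an \F-real in $(0,1)$. Applying the Speed-Up Lemma inductively with parameter $c^{(k+1)}$, and using that $(c^{(k+1)})^2=c^{(k)}$, one passes from $(x_n(c^{(k)})^n)$ \F\ to $(x_n(c^{(k+1)})^n)$ \F, so by induction $(x_n(c^{(k)})^n)$ is an \F-sequence for every $k$.

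Since $c^{(k)}\to 1$ as $k\to\infty$ and $c<1$, there is a $k$ with $c^{(k)}>c$; one last application of the Remark following Lemma~\ref{l:sequence-low} then produces $(x_nc^n)=(a_nb^n)$ as an \F-sequence, completing the reduction. The main obstacle is conceptual rather than technical: recognizing that one application of Speed-Up enlarges the admissible scale from $\alpha$ to $\sqrt{\alpha}$, so that finitely many iterations move the scale arbitrarily close to $1$---equivalently, in the unrescaled picture, arbitrarily close to $\rho$. The remaining bookkeeping (boundedness of $(x_n)$, existence of the rationals $b_0,b,R$, and applicability of the Remark and of Lemma~\ref{l:sequence-low} at the \F-real $b$) is routine.
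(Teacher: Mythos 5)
Your proof is correct and follows essentially the same route as the paper: rescale by a rational $R<\rho$ so the coefficients become bounded, extract the coefficient sequence via Lemma~\ref{l:low-coeff}, iterate the Speed-Up Lemma to pass from a ratio to its successive square roots, and conclude with Lemma~\ref{l:sequence-low}. The only cosmetic difference is that the paper fixes the number of iterations in advance (starting from the small radius $sb_N$ with $b_N=(r/s)^{2^N}$, so all intermediate radii are rational), whereas you iterate upward from the given radius until the scale $c_0^{1/2^k}$ overshoots the target and then shrink back using the Remark.
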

\begin{proof}
  Let $r$ be any positive rational number smaller than $\rho$. Choose
  a rational number $s$ between $r$ and $\rho$.  We then have
  $|a_ns^n|<1$ for almost all $n$. By assumption there is an $N$ such
  that $F$ is \F on $\{z\colon |z|\leq sb_N\}$, where
  \[b_N=\Bigl(\frac{r}{s}\Bigr)^{2^N}.\]
  This implies by Lemma \ref{l:low-coeff} that $((a_ns^n)b_N^n)$ is in
  \F. If $N>0$, the Speed-Up Lemma shows that $((a_ns^n)b_{N-1}^n)$
  is in \F. Continuing this way we conclude that
  $(a_ns^nb_0^n)=(a_nr^n)$ is in \F. So $F$ is \F on $\{z\colon
  |z|<r\}$ by Lemma \ref{l:sequence-low}.
\end{proof}
\begin{satz}\label{s:locally_F}
  Let $F$ be a holomorphic function, defined on an open domain
  $D\subset\C$. If $F$ is in \F on some non--empty open subset of
  $D$, it is in \F on every compact subset of $D$.
\end{satz}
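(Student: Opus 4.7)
The plan is to propagate the \F-property from the given open subset throughout $D$ by computable analytic continuation via Corollary \ref{f:speedup}, and then to piece together finitely many local witnesses into one valid on a neighborhood of the given compact set.

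First I would show that the set $S \subseteq D$ of points $z$ at which $F$ is in \F on some open neighborhood equals all of $D$. By hypothesis $S$ is non-empty, and by construction it is open; since a domain is connected, it suffices to show $S$ is closed in $D$. Given $z \in D \cap \bar S$, I would pick a rational $z_0 \in S$ so close to $z$ that $z$ lies in $B(z_0, \rho)$ for $\rho := \di(z_0, \C \setminus D)$; note that $\rho$ is a lower bound for the radius of convergence of the Taylor series of $F$ at $z_0$. Because $z_0$ is rational, translation by $z_0$ is \F, so $G(w) := F(z_0 + w)$ is in \F on some closed disc around $0$. Lemma \ref{l:low-coeff} then supplies an \F-sequence $(a_i b^i)$ of rescaled Taylor coefficients for some $0 < b < \rho$, and Corollary \ref{f:speedup} extends \F-ness of $G$, and hence of $F$, to every open subdisc $B(z_0, r)$ with rational $r < \rho$. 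Choosing such an $r$ with $|z - z_0| < r$ yields $z \in S$, so $S = D$.

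Next, given a compact $K \subset D$, I would fix $\epsilon > 0$ so that the closed $2\epsilon$-neighborhood of $K$ lies in $D$, cover $K$ by finitely many open discs $B_1, \ldots, B_n$ of radius $\epsilon$ with rational centers in $K$, and apply Remark \ref{b:union} inductively to conclude that $F$ is in \F on the open neighborhood $W := B_1 \cup \cdots \cup B_n$ of $K$. Since \F-ness on an open set passes to compact subsets (by the first observation after the compact-domain definition of \F), this yields $F \restriction K$ in \F.

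The main obstacle is the inductive gluing step, which requires verifying both hypotheses of Remark \ref{b:union} for unions of open discs with rational parameters. The \F-approximability condition comes for free since any such union is \sa, hence \low-approximable by the remark following Definition \ref{d:fopen}. The shrinkage condition $(U \cup V)_e \subseteq U_{u(e)} \cup V_{u(e)}$ needs an explicit geometric estimate: if $x \in (U \cup V)_e$ lies in $U$ with $\di(x, \C \setminus U) < 1/(ce)$ for a suitable $c$, then the nearest point of $\partial U$ to $x$ must lie in $V$ (otherwise $B(x, 1/e) \not\subseteq U \cup V$), and the convexity of $V$ together with the rationality of its parameters then forces a polynomial lower bound on $\di(x, \C \setminus V)$ in terms of $1/e$. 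This is the two-dimensional analogue of the interval case noted right after Remark \ref{b:union}.
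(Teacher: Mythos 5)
Your propagation step is essentially the paper's argument in different clothing: the paper connects rational points by a chain of discs and pushes \F-ness along the chain with Corollary \ref{f:speedup}, while you run the standard open-and-closed connectedness argument; both rest on the same engine (Lemma \ref{l:low-coeff} plus the Speed-Up Corollary applied to $F(z+z_0)$ for rational $z_0$), and both are fine. Where you genuinely diverge is the final gluing. The paper covers the compact set $K$ by finitely many \emph{closed} rational discs on each of which $F$ is in \F in the compact-domain sense, and then glues under that definition (left implicit, but easy: given $a$ and $e$, one can decide semialgebraically which disc $a$ is $\tfrac{1}{d(e)}$-close to, and uniform continuity on the union absorbs the ambiguity on overlaps). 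You instead glue \emph{open} discs via Remark \ref{b:union} to get \F-ness on an open neighborhood $W\supseteq K$ and then restrict. This buys a stronger intermediate conclusion but costs you the shrinkage condition $(U\cup V)_e\subset U_{u(e)}\cup V_{u(e)}$, and your justification of it is the one soft spot: knowing that the nearest point of $\partial U$ to $x$ lies in $V$ does not by itself give a lower bound on $\di(x,\C\setminus V)$ (think of $x$ approaching a tip of the lens $U\cap V$ -- there both distances degenerate, and only membership in $(U\cup V)_e$ rules this out). The clean way to finish your route is the device from the proof of Theorem \ref{s:sa}: $e\mapsto\inf\{\max(\di(x,\C\setminus U),\di(x,\C\setminus V))\mid x\in(U\cup V)_e\}$ is a positive $0$-definable semialgebraic function of $e$ (positivity by compactness of $(U\cup V)_e$), hence bounded below by $e^{-n}$ for some $n$, which yields the required polynomial $u$. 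With that repair, and noting that in the induction $U$ is a finite union of rational discs (still semialgebraic, so still \low-approximable), your argument is complete; the paper's compact gluing is simply the lighter-weight alternative.
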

\begin{proof}
  It is easy to see that one can connect any two
  rational\footnote{i.e.\ in $\Q^2$} points $a,b$ in $D$ by a chain
  $a=a_0,\ldots,a_n=b$ of rational points such that for every $i<n$,
  some circle $O_i=\{z\colon |z-a_i|<r_i\}$ contains $a_{i+1}$ and is
  itself contained in $D$.

  If $F$ is in \F on some open neighborhood of $a_0$, Corollary
  \ref{f:speedup} (applied to $F(z+a_0)$) shows that $F$ is in \F on
  any closed subdisc of $O_0$. So $F$ is in \F in some open
  neighborhood of $a_1$, etc. We conclude that all rational points of
  $D$ have an open neighborhood on which $F$ is \F. So, again by
  Corollary \ref{f:speedup}, $F$ is in \F on all closed discs with
  rational center contained in $D$. Since we can cover any compact
  subset of $D$ with a finite number of such discs, the theorem
  follows.
\end{proof}

We call all a holomorphic function which satisfies the condition of
Theorem~\ref{s:locally_F} to be \emph{locally} in \F.
\begin{folgerung}
  Let $F$ be a holomorphic function, defined on an open domain
  $D\subset\C$. Let $a$ be an \F--complex number in $D$ and let $b$ be
  a positive \F--real smaller than the radius of convergence of
  $F(a+z)=\sum_{i=0}^\infty a_nz^n$. Then $F$ is locally in \F if and
  only if $(a_nb^n)$ is an \F--sequence.\qed
\end{folgerung}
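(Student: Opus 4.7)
The plan is to prove both directions of the biconditional as direct applications of Theorem~\ref{s:locally_F} together with Lemmas~\ref{l:low-coeff} and~\ref{l:sequence-low}, bridged by the observation that translation by an \F--complex constant is an \F--function.

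For the forward direction, suppose $F$ is locally in \F. Since $b$ is smaller than the radius of convergence of $F(a+z)$, the closed disc $\{w\colon |w-a|\leq b\}$ lies in the domain of $F$ (passing if necessary to the analytic continuation defined by the power series, which is again locally in \F by Theorem~\ref{s:locally_F}). Hence $F$ is in \F on this compact set. Consider the translate $G(z)=F(a+z)$. Translation by the \F--complex number $a$ is itself an \F--function, as the witness for $z\mapsto z+a$ can be built directly from rational approximations to $a$, so composition yields that $G$ is in \F on $\{z\colon |z|\leq b\}$. Since $G(z)=\sum_{n=0}^\infty a_n z^n$ on this disc, Lemma~\ref{l:low-coeff} gives that $(a_n b^n)$ is an \F--sequence.

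For the converse, suppose $(a_n b^n)$ is an \F--sequence. Lemma~\ref{l:sequence-low} applied to the power series $G(z)=\sum_{n=0}^\infty a_n z^n$ shows that $G$ is in \F on the open disc $\{z\colon |z|<b\}$. Translating back by $a$, we obtain that $F$ is in \F on the open disc $\{w\colon |w-a|<b\}$, a non-empty open subset of $D$. Theorem~\ref{s:locally_F} then promotes this to $F$ being locally in \F on all of $D$.

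The only step requiring any scrutiny is the claim that shifting by an \F--complex number is an \F--map between the relevant discs; this is routine, since if $\tilde a\colon\N\to\Q^2$ approximates $a$ to within $\inv{e}$, then $f(c,e)=c+\tilde a(e)$ supplies the required \F--witness for the translation. All the substantive content is packaged in Lemma~\ref{l:low-coeff}, Lemma~\ref{l:sequence-low}, and Theorem~\ref{s:locally_F}, so no further obstacle arises.
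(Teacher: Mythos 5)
Your argument is correct and is exactly the assembly of Lemma~\ref{l:low-coeff}, Lemma~\ref{l:sequence-low} and Theorem~\ref{s:locally_F} (via Corollary~\ref{f:speedup}), together with translation by the \F--complex number $a$, that the paper intends --- which is why the statement carries no separate proof. The only cosmetic slip is in the converse direction: the disc $\{w\colon|w-a|<b\}$ need not be contained in $D$, so one should restrict to a smaller open disc about $a$ inside $D$ before invoking Theorem~\ref{s:locally_F}; this changes nothing.
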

\begin{folgerung}
  Let $F$ be holomorphic on a punctured disk $D_\bullet=\{z\mid
  0<|z|<r\}$. Then the following holds:
  \begin{enumerate}
  \item\label{enum:pole} If $0$ is a pole of $F$ and $F$ is \F on
    some non--empty open subset of $D_\bullet$, then $F$ is \F on every
    proper punctured subdisc $D'_\bullet=\{z\mid0<|z|<r'\}$.
  \item\label{enum:essential} If $0$ is an essential singularity of
    $F$, $F$ is not \low on $D_\bullet$.
  \end{enumerate}
\end{folgerung}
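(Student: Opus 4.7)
\medskip
\noindent\textbf{Proof plan.}
For both parts the plan is to reduce to results from the earlier sections.

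For part~(\ref{enum:pole}), I would first kill the pole. Let $k$ be its order, so that $G(z):=z^kF(z)$ extends to a holomorphic function on the full disc $D$. On the given non-empty open set $U\subseteq D_\bullet$ where $F$ is in~\F, the polynomial $z^k$ is also in~\F (continuous and \sa, hence \low by Theorem~\ref{s:sa}); products of two \F--functions on a common open set are again in~\F by pairing (which is \F--compact into $\C^2$ via Lemma~\ref{l:bounded} together with the bound $|z^k|\leq e^k$ on $U_e$) followed by the \low multiplication map $\C^2\to\C$, using Corollary~\ref{f:comp-c}. Hence $G\in\F$ on $U$. Theorem~\ref{s:locally_F} then gives $G\in\F$ on every compact subset of $D$, in particular on the closed disc $\{|z|\leq r'\}$, and so $G$ restricted to its open subset $D'_\bullet$ is in~\F. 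On $D'_\bullet$ the function $1/z^k$ has continuous \sa components on $\C\setminus\{0\}$, hence is \low there, and on $D'_{\bullet,e}$ it is bounded by $e^k$; the same product argument therefore yields $F=G\cdot 1/z^k\in\F$ on $D'_\bullet$.

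For part~(\ref{enum:essential}), the plan is to turn lowness into a polynomial growth bound and contradict it using Riemann's removable singularity theorem. Suppose $F$ were \low on $D_\bullet$. By Lemma~\ref{l:bounded} there is a \low function $\beta:\N\to\N$ with $|F(z)|\leq\beta(e)$ on $D_{\bullet,e}$. The key auxiliary fact---not proved in the paper but needed here---is that every \low function $\N\to\N$ is polynomially bounded. This follows by a routine induction on the inductive definition: the initial functions are bounded by polynomials, composition preserves polynomial bounds, and if $g(\bar x,i)$ is bounded by a polynomial $Q$ then $\sum_{i=0}^yg(\bar x,i)$ is bounded by $(y+1)Q(\bar x,y)$ (after replacing $Q$ by an increasing majorant), still a polynomial. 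Hence $\beta(e)\leq Ce^m$ for suitable $m,C$. For each small $|z|$, taking $e\approx 1/|z|$ puts $z$ in $D_{\bullet,e}$, giving $|F(z)|\leq C'|z|^{-m}$ in a punctured neighborhood of~$0$. Then $z^{m+1}F(z)$ is bounded near~$0$ and extends holomorphically by Riemann's theorem, so $0$ is at worst a pole of $F$---contradicting essentiality.

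The main obstacle I expect is the polynomial-bound lemma for \low functions, which is never recorded in the paper; the rest is a straightforward assembly of the product, composition, and local-extension tools built earlier.
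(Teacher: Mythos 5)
Your proof is correct and follows essentially the same route as the paper: kill the pole by multiplying with $z^k$, apply Theorem~\ref{s:locally_F}, and divide back by $z^{-k}$ on the punctured subdisc; for the essential singularity, combine Lemma~\ref{l:bounded} with the polynomial boundedness of \low functions to force a pole. The polynomial-bound fact you flag is indeed left implicit in the paper (it also underlies the earlier remark that $\exp$ is not \low bounded), so spelling it out is a reasonable addition rather than a deviation.
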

\begin{proof}
  \ref{enum:pole}: Let $0$ be a pole of order $k$. Then $F(z)z^k$ is
  holomorphic on $D=\{z\:|z|<r\}$. By the theorem $F(z)z^k$ is \F on
  any disc $D'=\{z:|z|<r'\}$, $r'<r$. Since $z^{-k}$ is \low on
  $D'_\bullet$, $F$ is \F on $D'_\bullet$.\\

  \noindent\ref{enum:essential}: If $F$ would be \low on $D_\bullet$,
  the absolute value of $F$ on $\{z\mid0<|z|<\inv{e}\}$ would be
  bounded by a polynomial in $e$ (Lemma \ref{l:bounded}). So $0$ would
  be a pole of $F$.
\end{proof}
\begin{folgerung}
  Let $S=\{-n\mid n\in\N\}$ denote the set of poles of the Gamma
  function $\Gamma$. $\Gamma$ is \low on every set
  $\{z:|z|<r\}\setminus S$.
\end{folgerung}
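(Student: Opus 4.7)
The plan is to use the functional equation
\[
\Gamma(z)=\frac{\Gamma(z+n+1)}{z(z+1)\cdots(z+n)}
\]
to reduce everything to what has already been established in the Gamma function example (Example 5), namely that $\Gamma$ is \low on every strip $\{w\mid 1<\re(w)<R\}$. First I would fix an integer $n>r$ and set $p(z)=z(z+1)\cdots(z+n)$. The identity above is then valid on $D_r:=\{|z|<r\}\setminus S$, so it suffices to show that both factors on the right are \low on $D_r$.

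For the numerator, the shift $\sigma(z)=z+n+1$ is a polynomial with integer coefficients, hence continuous \sa on $\C$, hence \low by Theorem \ref{s:sa}. Because $n>r$, the image of $\{|z|<r\}$ under $\sigma$ sits strictly inside the strip $U=\{1<\re(w)<R\}$ for, say, $R=n+r+2$; moreover I would check that $\sigma$ is \low--compact as a map into $U$, since for $z\in\{|z|<r\}_e$ both $\re(\sigma(z))-1$ and $R-\re(\sigma(z))$ exceed positive constants depending only on $n,r$. Example 5 provides $\Gamma$ \low on $U$, so Corollary \ref{f:comp-c} yields $z\mapsto\Gamma(z+n+1)$ \low on $\{|z|<r\}$ and hence on $D_r$.

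For the denominator, the polynomial $p$ is continuous \sa on $\C$ hence \low by Theorem \ref{s:sa}. The substantive step, and the main obstacle of the argument, is to establish a \low lower bound for $|p|$ on $(D_r)_e$. For each $k$ with $-k\in S\cap\{|z|<r\}$ the definition of $(D_r)_e$ gives $|z+k|\geq\inv{e}$, while for each $k$ with $|-k|\geq r$ we have $|z+k|\geq k-r$, a positive constant. Multiplying these bounds yields $|p(z)|\geq C\,e^{-m}$ on $(D_r)_e$, where $m$ is the number of poles of $\Gamma$ inside $\{|z|<r\}$ and $C=C(r,n)>0$. This shows that $p:D_r\to\C\setminus\{0\}$ is \low--compact; since reciprocation $w\mapsto 1/w$ is continuous \sa on the open \sa set $\C\setminus\{0\}$ hence \low (Theorem \ref{s:sa}), Corollary \ref{f:comp-c} delivers $1/p$ \low on $D_r$.

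Finally, the product $\Gamma(z)=\Gamma(z+n+1)\cdot(1/p(z))$ is handled by noting that the pair $(\Gamma(z+n+1),1/p(z))$ is \low on $D_r$ and, by Lemma \ref{l:bounded}, \low--bounded on each $(D_r)_e$, hence \low--compact into $\C^2$; complex multiplication $\C^2\to\C$ is continuous \sa hence \low by Theorem \ref{s:sa}; a final application of Corollary \ref{f:comp-c} concludes. The only non--routine part is the polynomial lower bound on $|p|$ above; everything else is bookkeeping with the composition machinery.
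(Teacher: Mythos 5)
Your argument is correct, and it fills in a proof that the paper leaves implicit. The paper presents this as an unproved corollary of the surrounding holomorphic machinery: $\Gamma$ is \low on a strip (Example 5), hence \emph{locally} in \F by Theorem~\ref{s:locally_F}, and the preceding corollary on poles (whose proof multiplies by $z^k$ to kill the pole, applies Theorem~\ref{s:locally_F} to the resulting holomorphic function, and divides back) extends this across the poles. Your decomposition $\Gamma(z)=\Gamma(z+n+1)/p(z)$ is exactly the multi-pole version of that mechanism; where you genuinely diverge is in how the numerator is shown to be \low on the disc. The paper's route runs $\Gamma(z+n+1)$ through the power-series apparatus (Lemma~\ref{l:low-coeff}, the Speed-Up Lemma, Corollary~\ref{f:speedup}, Theorem~\ref{s:locally_F}); you instead obtain it directly as the composition of the \low--compact affine shift $\sigma(z)=z+n+1$ into a strip with the strip result of Example~5, via Corollary~\ref{f:comp-c}. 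Your route is more elementary and self-contained -- it needs only Theorem~\ref{s:sa}, Theorem~\ref{s:int} (through Example~5) and the composition corollaries -- while the paper's route is what makes the statement a one-line consequence of the general theory of locally-\F holomorphic functions. You also correctly isolate the one substantive point common to both routes: the polynomial lower bound for $|p|$ on $(D_r)_e$, which is what makes $1/p$ \low on the punctured domain.

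Two small remarks. Your case split on the roots of $p$ is unnecessary, and the bound $|z+k|\geq k-r$ degenerates when $r$ is an integer and $k=r$; but since every root $-k$ of $p$ lies in $S$ and hence in $\C\setminus D_r$, the definition of $(D_r)_e$ already gives $|z+k|\geq\inv{e}$ for \emph{all} $k\leq n$ simultaneously, so $|p(z)|\geq e^{-(n+1)}$ with no constants to track. Also, to invoke Theorem~\ref{s:sa} for the domain one should take $r$ rational, which is harmless: for $r<r'$ one has $(D_r)_e\subseteq(D_{r'})_e$, so witnesses restrict.
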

$\Gamma$ cannot be \low on $\C\setminus S$ since $n!$ grows too fast.
We believe that $\Gamma$ is \el on $\C\setminus S$.

\begin{folgerung}
  The Zeta function $\zeta(z)$ is \low on every punctured disk
  $\{z\mid 0<|z-1|<r\}$.
\end{folgerung}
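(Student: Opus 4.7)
The plan is to reduce to the preceding corollary (case \ref{enum:pole}), which handles functions with a pole at the origin, by translating $\zeta$ so that its pole sits at $0$. Fix $r>0$ and set $F(z)=\zeta(z+1)$. Since $\zeta$ is meromorphic on $\C$ with a single (simple) pole at $z=1$, $F$ is holomorphic on $\C\setminus\{0\}$, and in particular on the punctured disk $D_\bullet=\{z\mid 0<|z|<r+1\}$, where $0$ is a pole. So the two hypotheses of case \ref{enum:pole} that remain to be verified are: (a) translation preserves lowness, so that $F$ is \low on some nonempty open subset of $D_\bullet$; and (b) we may then invoke the corollary on $D_\bullet$ and deduce \lowness of $F$ on the smaller punctured disk $\{0<|z|<r\}$.

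For (a), the Zeta-function example established that $\zeta$ is \low on every half-plane $\{w\mid\re(w)>s\}$ with $s>1$. Pick any $s\in(1,r+1)$. Since the shift $z\mapsto z+1$ is a uniformly \low map $\C\to\C$ with uniformly \low inverse, Lemma~\ref{l:comp-u} gives that $F(z)=\zeta(z+1)$ is \low on $\{z\mid\re(z)>s-1\}$. The intersection $U=\{z\mid\re(z)>s-1\}\cap D_\bullet$ is a nonempty open subset of $D_\bullet$, and $F$ restricted to $U$ is \low. Applying case \ref{enum:pole} of the preceding corollary to $F$ on $D_\bullet$ then yields that $F$ is \low on every proper punctured subdisc, and in particular on $\{z\mid 0<|z|<r\}$.

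Finally, translating back via the same shift argument, $\zeta(w)=F(w-1)$ is \low on $\{w\mid 0<|w-1|<r\}$, as required. No step looks genuinely hard; the only point deserving a line of care is the invariance of the class of \low functions under integer translation of the domain, but this is immediate from Lemma~\ref{l:comp-u} (or by trivially modifying the witnessing pair $(d,f)$), so the whole argument is essentially bookkeeping around the preceding corollary.
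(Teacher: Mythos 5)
Your argument is correct and is essentially the proof the paper intends: the corollary is stated without proof precisely because it follows from part~\ref{enum:pole} of the immediately preceding corollary applied to $\zeta(z+1)$, which has a single pole at $0$ and was shown to be \low on a half-plane, exactly as you argue. The only cosmetic point is the citation for composing with the shift $z\mapsto z+1$: Lemma~\ref{l:comp-u} requires the outer function to be \emph{uniformly} in \F, which is not known for $\zeta$ on the half-plane, so the cleaner reference is Corollary~\ref{f:comp-c} together with Remark~\ref{b:sa_compact} (the shift is \low compact) --- or, as you note, one simply adjusts the witnessing pair $(d,f)$ by hand.
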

$\zeta$ cannot be \low on $\C\setminus\{1\}$, since $\infty$ is an
essential singularity. But $\zeta$ may be \el on $\C\setminus\{1\}$.
\begin{folgerung}
  The set $\C_\F=\R_\F[\ii\,]$ of \F--complex numbers is
  algebraically closed and closed under $\ln(z)$, $\exp(z)$,
  $\Gamma(z)$ and $\zeta(z)$.
\end{folgerung}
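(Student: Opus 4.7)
The plan splits into two parts: algebraic closure, which follows from a classical algebraic fact together with Corollary \ref{f:real_closed}, and closure under the four special functions, which is a bookkeeping exercise applying Lemma \ref{l:constant} together with the \low-ness results established earlier.

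For algebraic closure, recall from Corollary \ref{f:real_closed} that $\R_\F$ is a real closed field. By the Artin--Schreier theorem, adjoining $\sqrt{-1}$ to any real closed field yields an algebraically closed field, so $\C_\F = \R_\F[\ii\,]$ is algebraically closed. No new computability content is needed here.

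For the four special functions I would use the following general principle: identifying $\C$ with $\R^2$, Lemma \ref{l:constant} (applied coordinate-wise) says that if $F : O \to \C$ is in \F on an open $O \subseteq \C$ and $z \in O \cap \C_\F$, then $F(z) \in \C_\F$. It therefore suffices, for each function $F$ in the list and each $z \in \C_\F$ in the domain of $F$, to exhibit an open neighborhood of $z$ on which $F$ is \low. For $\exp$, the examples section shows $\exp$ is \low on every half-plane $\{z \mid \re(z) < s\}$, covering all of $\C$. For the principal branch of $\ln$, it was shown to be \low on $\Cs$, which is its full domain. For $\Gamma$, the preceding corollary states that $\Gamma$ is \low on $\{z \mid |z| < r\}\setminus S$ for every $r$, covering $\C \setminus S$. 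For $\zeta$, combining the \low-ness of $\zeta$ on the half-planes $\{z \mid \re(z) > s\}$ ($s > 1$) with Theorem \ref{s:locally_F} applied to $\zeta$ as a holomorphic function on $\C\setminus\{1\}$ yields that $\zeta$ is \low on every compact subset of $\C\setminus\{1\}$, hence locally \low on its whole domain. Applying the principle in each case finishes the proof.

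There is no serious obstacle; the main non-paper ingredient is the Artin--Schreier theorem, and the rest is an assembly of earlier results.
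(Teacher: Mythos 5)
Your proposal is correct and follows the same route the paper (implicitly) takes: algebraic closure is exactly the paper's remark that $\R_\F$ real closed implies $\R_\F[\ii\,]$ algebraically closed, and closure under the special functions is the intended assembly of the earlier local \low{}ness results with Lemma \ref{l:constant}. The only point worth making explicit is that the cited results give \emph{lower elementary}, hence membership in every good class \F, which is why the corollary holds for arbitrary \F.
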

Note that $\R_\F[\ii\,]$ is algebraically closed since $\R_\F$ is real
closed by Corollary~\ref{f:real_closed}.

If $a_0, a_1,\ldots$ are $\Q$--linearly independent algebraic numbers,
the exponentials $\exp(a_0), \exp(a_1).\ldots$ are \low and
algebraically independent by the Lindemann--Weierstraß Theorem.  So
the field of \low complex numbers has infinite transcendence degree.\\

\begin{bemerkung}
  If a holomorphic function belongs to \F, then also its derivative
  belongs to \F.
\end{bemerkung}
\begin{proof}
  This follows from the formula
  \[F'(z_0)=\inv{2\pi\ii}\int_{|z-z_0|=r}
  \frac{F(z)}{(z-z_0)^2}\dd z.\]
\end{proof}
\begin{bemerkung}
  Let $F$ be holomorphic function defined on an open domain, which
  maps some \F-complex number $a$ to an \F--complex number
  $F(a)$. Then $F$ is locally in \F if and only if its derivative is.
\end{bemerkung}
\begin{proof}
  This follows from the fact that if $a_0$ and $b$ are in \F, then
  $(a_nb^n)_{n\geq 0}$ is an \F--sequence if and only if
  $(na_nb^{n-1})_{n\geq 1}$ is an \F-sequence.
\end{proof}
Pour-El and Richards (\cite{pour-el}) have shown that this is not true
for functions of the reals: There is a recursive $C^1$--function with
non--recursive derivative.
\begin{bemerkung}
  Let $F$ be a non-constant holomorphic function which belongs to \F.
  Then $a$ is in \F if and only if $F(a)$ is in \F.
\end{bemerkung}
\begin{proof}
  Assume first that $F'(a)$ is not zero. Then we can find a small open
  rectangle $O$ with \low endpoints which contains $a$ and such that
  $F$ defines a homeomorphism between $U$ and $F(U)$. We can make $U$
  small enough such that $F\restriction U$ satisfies the conditions of
  Theorem~\ref{s:ift} (see also Lemma~\ref{l:gamma}). Then the inverse
  of $F\restriction U$ is in \F and maps $F(a)$ to $a$.

  If $F'(a)=0$, let $n$ be minimal such that $F^{(n+1)}(a)$ is not
  zero. Since $F^{(n)}$ is in \F and $F^{(n)}(a)=0$, the above shows
  that $a$ is in \F.
\end{proof}

\noindent
We close with two more examples.\\

\beispiel{} The function $\exp(\inv{z})$ is \low on every annulus
$\{z\mid r<|z|\,\}$, $r>0$, but not on $\C\setminus\{0\}$.\\

\beispiel{} There is a \low function $f:\N\to\{0,1\}$ such that
$n\mapsto f(2^n)$ is not \low. Consider the series
$F(z)=\sum_{n=0}^\infty a_nz^n$, where $a_n=f(2^n)$. $F$ is
holomorphic on $\E$ and \low on every compact subset of $\E$, but the
sequence $(a_n)$ is not \low.

\subsection*{Added in proof (26.9.2010)}
The definition of a (primitive) recursive function $F:\R\to\R$ given
by E. Specker in \cite{specker} is equivalent to the following: There
are (primitive) recursive functions $d:\N\to\N$ and
$f:\Q\times\N\to\Q$ such that for all $e\in\Np$ and all $a\in\Q$ and
$x\in\R$
\[ |x-a|<\inv{d(e)} \to |F(x)-f(a,e)| < \inv{2^e}.\]
Locally this agrees with our Definition \ref{d:f-function}.

\bibliographystyle{plain}
\bibliography{periods}
\vspace{4mm}

\begin{small}
\noindent\parbox[t]{15em}{
Katrin Tent,\\
Mathematisches Institut,\\
Universit\"at M\"unster,\\
Einsteinstrasse 62,\\
D-48149 M\"unster,\\
Germany,\\
{\tt tent@math.uni-muenster.de}}
\hfill\parbox[t]{18em}{
Martin Ziegler,\\
Mathematisches Institut,\\
Albert-Ludwigs-Universit\"at Freiburg,\\
Eckerstr. 1,\\
D-79104 Freiburg,\\
Germany,\\
{\tt ziegler@uni-freiburg.de}}

\end{small}
\end{document}